\tikzset{>=latex}
\newcommand{\R}{{\mathbb R}}
\newcommand{\ud}{{\mathrm d}}
\newcommand{\norm}[1]{\left\lVert#1\right\rVert}
\newcommand{\abs}[1]{\left\lvert#1\right\rvert}
\newcommand{\inprod}[1]{\left(#1\right)}
\newcommand{\vect}[1]{\mathbf{#1}}
\newcommand{\eye}{{\mathrm I}}
\newcommand{\widebar}[1]{\overline{#1}}
\newcommand{\Lap}{\Delta}
\def\qedhere{\quad\qed}
\DeclareMathOperator*{\argmin}{arg~min}
\newcounter{assump}
\newenvironment{assumptions}
                {\begin{list}{{\upshape A\arabic{assump}.}}{\usecounter{assump}
                \setlength{\leftmargin}{0pt}
                \setlength{\itemindent}{38pt}}}{\end{list}}
\newcommand{\TheTitle}{Backward step control for Hilbert space problems}
\newcommand{\TheAuthors}{A. Potschka}
\titlerunning{\TheTitle}
\authorrunning{\TheAuthors}
\title{\TheTitle}
\author{Andreas Potschka}
\institute{Interdisciplinary Center for Scientific Computing, Heidelberg
University, Im Neuenheimer Feld 205, 69120 Heidelberg, Germany.
\email{potschka@iwr.uni-heidelberg.de}}
\begin{document}
\maketitle

\begin{abstract}
We analyze backward step control globalization for finding zeros of G\^{a}\-teaux-differentiable functions that map from a Banach space to a Hilbert space. The results include global convergence to a distinctive solution characterized by propagating the initial guess by a generalized Newton flow with guaranteed bounds on the discrete nonlinear residual norm decrease and an (also numerically) easily controllable asymptotic linear residual convergence rate. The convergence theory can be exploited to construct efficient numerical methods, which we demonstrate for the case of a Krylov--Newton method and an approximation-by-discretization multilevel framework. Both approaches optimize the asymptotic linear residual convergence rate, either over the Krylov subspace or through adaptive discretization, which in turn yields practical and efficient stopping criteria and refinement strategies that balance the nonlinear residuals with the relative residuals of the linear systems. We apply these methods to the class of nonlinear elliptic boundary value problems and present numerical results for the Carrier equation and the minimum surface equation.
\end{abstract}

\keywords{Newton-type methods, globalization, Hilbert space, backward step control}

\subclass{65J15, 58C15, 65F08, 35J66, 74S05}

\section{Introduction}
\label{sec:introduction}

Let $U$ be a Banach space with norm $\norm{.}_U$ and $V$ be a
Hilbert space (we discuss generalizations to Banach spaces in
section~\ref{sec:Banach}) with inner product $\inprod{.,.}_V$ and norm
$\smash[t]{\norm{v}_V = \smash[b]{\sqrt{\inprod{v,v}_V}}}$ for $v \in V$. 
For some open subset $D \subseteq U$, let $F: D \to V$ be continuously
G\^ateaux differentiable~\cite[3.1.1.~Def.]{Hamilton1982} with derivative $F': D
\times U \to V$, i.e., 
\[
  F'(u, \delta u) = \lim_{h \to 0} \frac{1}{h} \left( F(u+h \delta u) - F(u)
  \right)
\]
exists for all $u \in D$, $\delta u \in U$ and $F'$ is continuous as a function
from the product space $D \times U$ to $V$ (which is a weaker requirement than
continuity of $F'$ considered as a map from
$D$ to $\mathcal{L}(U, V)$, the Banach space of all bounded linear operators
from $U$ to $V$). Note that continuous G\^ateaux differentiability implies
linearity of $F'$ in the second argument \cite[3.2.5~Thm.]{Hamilton1982}.
In the following, we write $A(u) \delta u$ shorthand for $A(u, \delta u)$
whenever an operator $A$ is linear in the second argument.
We consider the problem of finding an unknown $u \in D$ such that
\begin{equation}
  \label{eqn:FOfxIsZero}
  F(u) = 0_V
\end{equation}
with a Newton-type iteration: Given $u_0 \in D$, find a suitable approximation
$M: D \times V \to U$ (linear in the second argument) of the inverse of $F'(u)$
and a step size sequence $(t_k)_{k \in \mathbb{N}}$ satisfying $t_k \in [0, 1]$
such that the iteration
\begin{equation}
  \label{eqn:Newton}
  u_{k+1} = u_k + t_k \delta u_k \quad 
  \text{with } \delta u_k = -M(u_k) F(u_k)
\end{equation}
converges to a solution $u^\ast \in D$ of \eqref{eqn:FOfxIsZero}. 
The first and main part of this article is devoted to finding a suitable step
size sequence $(t_k)_{k \in \mathbb{N}}$ in section~\ref{sec:convergence}. Out of the
many ways to construct $M(u)$, we elaborate on two choices in
section~\ref{sec:designM}.
For convenience, we define the negative generalized Newton flow $f: D \to U$ as
\[
f(u) = M(u)F(u).
\]
As in \cite{Potschka2016}, our convergence analysis will be based on generalized
Newton paths $u^k: [0, \infty) \to U$, which are defined as the solutions of the
initial value problems
\begin{equation}
  \label{eqn:genDavidenko}
  \frac{\ud u^k}{\ud t}(t) = -f(u^k(t)) \quad 
  \text{for } t \in [0, \infty) \quad 
  \text{with } u^k(0) = u_k.
\end{equation}
We shall prove existence and uniqueness of solutions to \eqref{eqn:genDavidenko}
in our setting in Theorem \ref{thm:arclength}.
Note that \eqref{eqn:Newton} is an explicit Euler discretization of
\eqref{eqn:genDavidenko} with step sizes $(t_k)_{k \in \mathbb{N}}$. 
If $M$ is chosen as the inverse of $F'$ then \eqref{eqn:Newton} is a damped
Newton method and \eqref{eqn:genDavidenko} is the Davidenko differential
equation \cite{Davidenko1953}. The equivalence of the Newton method with
explicit Euler on the Davidenko differential equation has been exploited by
various authors (see, e.g.,
\cite{Deuflhard1974,Brezinski1975,Ascher1987,Bock2000b,Deuflhard2006}).

In the theory and the numerics below, the operator $M$ does not appear
explicitly anymore and only the function $f$ will be required, which implicitly
defines $M(u)$ in the direction $F(u)$. As it turns out, all other directions of
$M(u)$ are not important. This observation leads to crucial improvements in the
assumptions stated in \cite{Potschka2016}.

The convergence theory below lends itself immediately to the construction of
numerical algorithms for the approximate solution of \eqref{eqn:FOfxIsZero} via
\eqref{eqn:Newton}. In particular, it can be used to construct $M(u)$ from
$F'(u)$ by finite-dimensional approximation, which can then be exploited to
construct adaptive discretization schemes that optimize the contraction rate of
the algorithm in $V$. In the case of Finite Element analysis, our approach
delivers a multi-level Newton adaptive mesh refinement algorithm
\cite{Hohmann1994} that can be used straight-forward as another tool
complementing refinement strategies based on a posteriori error estimation (see,
e.g., \cite{Graetsch2005,Ainsworth2000,Becker2001}). 

Historically, Newton-type methods come in two flavors: Either, we do not solve
the linearized systems with operator $F'(u_k)$ and right-hand side $-F(u_k)$
exactly, which is usually known under the name \emph{inexact Newton} method
\cite{Dembo1982,Ypma1984}. Alternatively, we apply an approximation $M(u)$ of
the inverse of $F'(u)$ directly, an approach which is sometimes called
\emph{approximate Newton} method and which is the classical form of
Quasi-Newton methods \cite{Dennis1977}. As pointed out by Bank and Rose
\cite{Bank1981}, the two flavors are in fact different formulations of the same
class of methods. As mentioned above, our analysis is based on the approximate
Newton formulation in order to define the generalized Newton path
\eqref{eqn:genDavidenko}, but the control of the linearized residual in the
sense of inexact Newton methods emerges as the $\kappa$-condition
A\ref{ass:kappa}, which is identical to a choice of $\eta_k = \kappa < 1$ for
the residual forcing sequence $(\eta_k)$ first proposed in \cite{Dembo1982}. In
turn, the classical analysis of the local rates of convergence in
\cite{Dembo1982} carries over to our setting if $\kappa$ is allowed to be
reduced from iteration to iteration in the sense of a forcing sequence once we
are close to a solution.  This shall, however, not be the focus of this paper,
where we focus on the preasymptotic global convergence and are content with
locally linear convergence rates. 

Similar to \cite{Deuflhard1991}, we base our globalization approach on a
continuous curve, but we substitute here the exact Newton path (obtained with
the choice $M(u) = \smash[t]{F'(u)}^{-1}$ from \eqref{eqn:genDavidenko}) by a
generalized Newton path, which is allowed to have $M(u) \neq
\smash[t]{F'(u)}^{-1}.$ Following ideas of \cite{Hohmann1994}, we can use the
(contravariant) $\kappa$-condition A\ref{ass:kappa} to design a multilevel
Newton-type method for the construction of $M$ based on adaptive discretization.
This leads to adaptive discretizations solely based on balancing the
discretization residual with the nonlinear residual of the Newton-type method.
We remark that this convenient black-box approach might be inferior to more
involved schemes that balance the discretization errors and nonlinear errors
(instead of residuals) \cite{Rannacher2013} or exploit underlying (energy)
minimization properties \cite{Deuflhard1998} for particular problem classes.
Nonetheless, our method is the first for which convergence to the closest
solution in the sense of the generalized Newton flow \eqref{eqn:genDavidenko}
can be proven.

\paragraph{Contributions}
In this article, we extend the convergence analysis of backward step control for
\eqref{eqn:Newton} from the finite-dimensional to the Hilbert space setting. We
provide reasonable assumptions and convergence results for
\eqref{eqn:Newton} with backward step control. The main result is convergence to
a distinct solution characterized by the propagation of the initial guess by the
generalized Newton flow \eqref{eqn:genDavidenko} provided that no singularity of
the problem interferes. In addition, we prove an a priori bound on the nonlinear
reduction of the residual norm. The convergence theory can be exploited to
construct efficient numerical algorithms, which we discuss for the case of a
Krylov--Newton method and a Finite Element approximation. Both are based on the
optimization of the residual contraction constant, which yields in the latter
case an efficient adaptive mesh refinement strategy. The results are
demonstrated for the numerical solution of the Carrier equation and the minimal
surface equation.

\paragraph{Overview}
In section~\ref{sec:convergence}, we discuss the general assumptions, derive and
motivate the method of backward step control, provide reasons why implicit and
higher-order 
time stepping methods for \eqref{eqn:genDavidenko} are not advisable, provide
step size bounds, and establish the notion of generalized Newton paths, which
are the central ingredient for the analysis of local and global convergence of
backward step control. We then discuss extensions to Banach spaces and present
an algorithmic realization with a minimal working Matlab example code for the
solution of $\arctan(u) = 0$.
In section~\ref{sec:designM}, we exploit the
convergence analysis to construct two Newton-type methods, a Krylov--Newton
method and a method based on approximation-by-discretization. We apply these
methods in section~\ref{sec:ellipticPDE} to the class of nonlinear elliptic
boundary value problems and provide numerical results for the Carrier equation
and the minimum surface equation.

\paragraph{Notation} 

We denote the open ball of radius $r>0$ around $u\in U$ by $B(u,r)$ and the 
Laplace operator by $\Lap = \nabla \cdot \nabla$. As usual, we write $C^0$ for
the space of continuous functions, $H^1_0(\Omega)$ for the Sobolev space of
square integrable functions on a bounded domain $\Omega \subset \mathbb{R}^n$
that vanish at the boundary and admit square integrable derivatives, and
$H^{-1}(\Omega)$ for its dual space. The Euler number is denoted by $e =
\sum_{k=0}^{\infty} \frac{1}{k!}$.

\section{Convergence analysis}
\label{sec:convergence}

The overall structure of the backward step control convergence analysis in
Hilbert spaces is similar to the finite-dimensional case \cite{Potschka2016}.
The intricate interplay of the changes in the details, however, advises us to
present the convergence analysis in a self-contained fashion.

\subsection{Discussion of assumptions}

We start with the following definitions.
\begin{definition}
  The \emph{level function} $T: D \to \mathbb{R}$ is 
  $T(u) = \frac{1}{2} \norm{F(u)}_V^2.$
\end{definition}
\begin{definition}
  The \emph{level set of $u \in D$} is 
  $\smash[t]{\widetilde{\mathcal{T}}}(u) = \left\{ \bar{u} \in D \mid
  T(\bar{u}) \le T(u) \right\}.$
\end{definition}
\begin{definition}
  The \emph{path connected level set of $u \in D$} is
  \[
  \mathcal{T}(u) = \left\{ \bar{u} \in \widetilde{\mathcal{T}}(u)
  \mid \exists 
  c \in C^0\left([0, 1], \widetilde{\mathcal{T}}(u)\right)
  \text{with } c(0) = u, c(1) = \bar{u} \right\}.
  \]
\end{definition}
\begin{definition}
  For $r \in (1, \infty)$ the \emph{set of $r$-regular points} is
  \[
  \mathcal{R}_r = \left\{ u \in D \mid r^{-1} \norm{F(u)}_V <
  \norm{f(u)}_U < r \norm{F(u)}_V \right\}.
  \]
\end{definition}
\begin{definition}
  The \emph{set of $\infty$-regular points} is
  $\mathcal{R}_\infty = \bigcup_{r \in (1, \infty)} \mathcal{R}_r$.
\end{definition}
We remark that if $u \in D \setminus \mathcal{R}_\infty$, which means that $u
\not\in \mathcal{R}_r$ for all $r
\in (1, \infty)$, then $M(u)$ is either not bounded or does not admit a bounded
inverse \cite[\S I.6, Cor.~2, 3]{Yosida1978}. The contrary is, however,
not true: $M(u)$ may be unbounded or not admit a bounded inverse although $u \in
\mathcal{R}_r$ for some $r \in (1, \infty)$, because in the definition of
$\mathcal{R}_r$ only the action of $M(u)$ in direction $F(u)$ is of interest.

We require the following assumptions to hold true:
\begin{assumptions}
\item 
  \label{ass:validIni}
  There exists an $r \in (1, \infty)$ such that $u_0 \in \mathcal{R}_r$, and
  $\norm{F(u_0)}_V > 0$.
\item \label{ass:kappa}
  There exists a $\kappa < 1$ such that for all $u \in \mathcal{R}_r \cap
  \mathcal{T}(u_0)$
  \[
  \norm{F(u) - F'(u) f(u)}_V \le \kappa \norm{F(u)}_V.
  \]
\item \label{ass:omega}
  There exists an $\omega < \infty$ such that for all $u \in \mathcal{T}(u_0), t
  \in [0, 1]$
  \[
  \norm{\left[ F'(u) - F'(u-tf(u)) \right] f(u)}_V \le \omega t \norm{f(u)}_U
  \norm{F(u)}_V.
  \]
\item \label{ass:fLipschitz}
  There exists an $L < \infty$ such that for all $u, \bar{u} \in
  \mathcal{T}(u_0)$
  \[
  \norm{f(u) - f(\bar{u})}_U \le L \norm{u - \bar{u}}_U.
  \]
\item \label{ass:gamma}
  For all $\eta > 0$ there exist constants $\gamma$, $t_\gamma > 0$ such that
  for all $t \in [0, t_\gamma]$, $u \in \mathcal{R}_r \cap \mathcal{T}(u_0)$
  with $\norm{f(u)}_U > \eta$
  \[
  \norm{f(u-tf(u)) - f(u)}_U \ge \gamma t.
  \]
\end{assumptions}

The main difference in the assumptions here compared to the finite-dimensional
setting in \cite{Potschka2016} is the weakening of A\ref{ass:kappa} and
A\ref{ass:omega} from a formulation with matrices to a formulation which
requires the properties to hold only in the direction of the residual $F(u)$.
Thus, all requirements can be postulated without using norms for operators that
map between $U$ and $V$. 
Apart from the avoidance of operator norms, we had to replace all arguments
based on compactness of bounded sets by other means for the proofs in the
Hilbert space case.
The discussion of the assumptions in \cite{Potschka2016} still applies to a
large extent here: We require in A\ref{ass:validIni} that $u_0$ is an
$r$-regular point but not a solution. The central $\kappa$-condition
A\ref{ass:kappa} is a contravariant version of Bock's
covariant $\kappa$-condition \cite{Bock1987}
in the sense that it quantifies on the one hand the deviation of the inexact
increment $\delta u = -M(u) F(u)$ from the Newton increment $\delta
u^\mathrm{Newton} = -F(u)^{-1} F(u)$ in the $V$-norm
\[
  \norm{F'(u) \left[ \delta u^\mathrm{Newton} - \delta u \right]}_V
  = \norm{F(u) - F'(u)f(u)}_V
  \le \kappa \norm{F(u)}_V
\]
in comparison to Bock's covariant $\kappa^\mathrm{cov} < 1$ in the $U$-norm (see
also \cite[section~5.2]{Potschka2013})
\begin{equation}
  \label{eqn:kappa_Bock}
  \norm{M(u - f(u)) \left[ F(u) - F'(u)f(u) \right]}_U 
  \le \kappa^\mathrm{cov} \norm{f(u)}_U.
\end{equation}
On the other hand, $\kappa$ in A\ref{ass:kappa} characterizes the asymptotic
Q-linear convergence rate of the residual norms $\norm{F(u_k)}_V$, whereas
$\kappa^\mathrm{cov}$ in \eqref{eqn:kappa_Bock} characterizes the asymptotic
R-linear convergence rate of the error $\norm{u_k - u_\ast}_U$ if $u_\ast =
\lim_{k \to \infty} u_k$ (for a discussion of different
affine invariances see \cite{Deuflhard2006}). The $\omega$-condition
A\ref{ass:omega} measures a combination of the nonlinearity and the
well-posedness of \eqref{eqn:FOfxIsZero} because if $F'$ is Lipschitz continuous
with Lipschitz constant $L'$, then we obtain
\[
\norm{\left[ F'(u) - F'(u - tf(u)) \right] f(u)}_V \le L' t \norm{f(u)}_U^2
\]
and boundedness of $M(u)$ in direction $F(u)$ with constant $C$ implies
A\ref{ass:omega} with $\omega = CL'$. The Lipschitz condition
A\ref{ass:fLipschitz} is classical. The nonstandard assumption A\ref{ass:gamma}
follows, for instance, if $f$ is bi-Lipschitz with constant $\ell$
\[
\norm{f(u - tf(u)) - f(u)}_U \ge \ell t \norm{f(u)}_U
\]
with $\gamma = \eta \ell$ and $t_\gamma$ arbitrary.

\subsection{Backward step control}

Newton-type methods \eqref{eqn:Newton} are explicit Euler discretizations with
step sizes $t_k$ of the generalized Newton flow \eqref{eqn:genDavidenko}. Thus,
the convergence of Newton-type methods is strongly connected to the stability
problem of the explicit Euler method. Implict Euler, in contrast, has ideal
stability properties: It is an L-stable method (see, e.g., \cite{Hairer1996}).
Hence, in order to determine $t_k$, we consider the backward iterate
\[
\bar{u}_k(t_k) := u_{k+1} + t_k f(u_{k+1}) = u_k + t_k g(u_k, t_k) \quad
\text{with } g(u, t) := f(u - tf(u)) - f(u).
\]
The point $\bar{u}_k(t_k)$ is the starting point of a (stable) implicit Euler
step for \eqref{eqn:genDavidenko} that arrives exactly at $u_{k+1}$, the result
of a possibly unstable explicit Euler step starting from $u_k$. 
The idea of backward step control is based on a backward error argument: If a
small perturbation of the starting point $u_k$ can be found from which a stable
implicit Euler step arrives exactly at $u_{k+1}$, we can accept the step size.
We thus require that the distance 
between $u_k$ and $\bar{u}_k(t_k)$ is bounded by some fixed constant $H>0$
through the choice
\begin{equation}
  \label{eqn:BSC} \tag{BSC}
  t_k = \min \mathcal{B}_H(u_k) \quad \text{where }
  \mathcal{B}_H(u) = \left\{ t \in [0, 1] \mid H = t \norm{g(u,t)}_U
  \right\} \cup \{1\},
\end{equation}
which implies $\norm{\bar{u}_k(t_k) - u_k}_U \le H$ (with equality for $t_k <
1$) by continuity of $g$.

\subsection{Implicit and higher-order time stepping methods}

The question whether explicit Euler is really the best method to solve
\eqref{eqn:genDavidenko} arises naturally. We can answer this question
affirmatively for two reasons: First, all implicit methods have the drawback
that an approximated inverse of an operator involving derivatives of the
approximated inverse $M(u)$ would be required, e.g., in the case of the
implicit Euler method
\begin{align*}
  0 &= u_{k+1} + t_k f(u_{k+1}) - u_k,
  \intertext{with a local Newton corrector}
  u_{k+1}^{i+1} &= u_{k+1}^i - \left[ \eye_U + t_k f'(u_{k+1}^i) \right]^{-1}
  \left( u_{k+1}^i + t_k f(u_{k+1}^i) - u_k \right),
\end{align*}
which is not readily available and would require higher regularity of $M$ than
guaranteed by the assumptions above. 
Second, higher order methods would destroy the well-known locally quadratic
convergence of the Newton method, where $M(u) = (F'(u))^{-1}$.  This can be seen
from the homotopy formulation
\begin{equation}
  \label{eqn:uHomotopy}
  F(u(t)) - e^{-t} F(u_0) = 0,
\end{equation}
which we can differentiate with respect to $t$ to arrive exactly at
\eqref{eqn:genDavidenko} provided that $F'(u(t))$ stays invertible. 
Thus, the second order truncation error of explicit
Euler is required to obtain locally quadratic convergence, because higher
consistency orders would result in the locally linear convergence dictated by
\eqref{eqn:uHomotopy}.

We have not explored multi-step methods of order one further. The possible
outcome of this line of research is unfortunately unclear at present and exceeds
the scope of this paper.

\subsection{Step size bounds}

\begin{lemma}
  \label{lem:localFullSteps}
  If A\ref{ass:validIni} and A\ref{ass:fLipschitz} hold, then \eqref{eqn:BSC}
  delivers full steps $t_k = 1$ in the vicinity of a solution $u^\ast \in
  \mathcal{R}_r \cap \mathcal{T}(u_0)$.
\end{lemma}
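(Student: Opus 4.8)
The plan is to show that, for $u_k$ in a small enough ball around $u^\ast$, the backward-error quantity $t\norm{g(u_k,t)}_U$ stays strictly below $H$ for every $t\in[0,1]$; this makes the defining equation $H=t\norm{g(u_k,t)}_U$ in \eqref{eqn:BSC} unsolvable, so $\mathcal{B}_H(u_k)=\{1\}$ and hence $t_k=\min\mathcal{B}_H(u_k)=1$.

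First I would record that $f$ vanishes at the solution: since $M(u^\ast)$ is linear in its second argument, $f(u^\ast)=M(u^\ast)F(u^\ast)=M(u^\ast)0_V=0_U$. Then A\ref{ass:fLipschitz} gives, for $u$ near $u^\ast$, the bound $\norm{f(u)}_U=\norm{f(u)-f(u^\ast)}_U\le L\norm{u-u^\ast}_U$, so $\norm{f(u)}_U$ is small in a vicinity of $u^\ast$; in particular each backward-perturbed point $u-tf(u)$ with $t\in[0,1]$ satisfies $\norm{(u-tf(u))-u^\ast}_U\le(1+L)\norm{u-u^\ast}_U$ and thus also stays in any prescribed neighborhood of $u^\ast$ once $u$ is close enough. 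The key estimate is then the quadratic-in-$t$ bound, again by A\ref{ass:fLipschitz},
\[
  t\norm{g(u,t)}_U = t\,\norm{f(u-tf(u))-f(u)}_U \le L\,t\,\norm{tf(u)}_U = L\,t^2\norm{f(u)}_U \le L\norm{f(u)}_U \le L^2\norm{u-u^\ast}_U .
\]
Choosing the vicinity of $u^\ast$ to be $B(u^\ast,\rho)$ with $\rho < H/L^2$ (and small enough for the caveat above), we obtain $t\norm{g(u_k,t)}_U < H$ for all $t\in[0,1]$ whenever $u_k\in B(u^\ast,\rho)$, which yields $\mathcal{B}_H(u_k)=\{1\}$ and hence the full step $t_k=1$. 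Assumption A\ref{ass:validIni} enters only to fix $r$ and make $\mathcal{R}_r$, $\mathcal{T}(u_0)$, and the surrounding quantities well defined.

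The only slightly delicate point is the domain bookkeeping: one must ensure that $u_k$ and all of the backward-perturbed points $u_k-tf(u_k)$, $t\in[0,1]$, lie in the set on which A\ref{ass:fLipschitz} is posited, so that the Lipschitz estimate for $g$ is legitimate. This is handled exactly as indicated above by shrinking the vicinity, since $\norm{tf(u_k)}_U\le L\norm{u_k-u^\ast}_U\to 0$ as $u_k\to u^\ast$. Everything else is an elementary continuity argument; no use of A\ref{ass:kappa}, A\ref{ass:omega}, or A\ref{ass:gamma} is needed for this lemma.
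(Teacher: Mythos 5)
Your proof is correct and follows essentially the same route as the paper's: establish $f(u^\ast)=0$, apply A\ref{ass:fLipschitz} twice to get the quadratic-in-$t$ bound $t\norm{g(u_k,t)}_U\le L^2t^2\norm{u_k-u^\ast}_U$, and pick a ball of radius on the order of $H/L^2$ so this stays strictly below $H$, forcing $\mathcal{B}_H(u_k)=\{1\}$. The one thing you do beyond the paper is flag the domain bookkeeping (that $u_k-tf(u_k)$ must remain where A\ref{ass:fLipschitz} applies), which the paper elides; your observation that this is handled by shrinking the vicinity is sound, though a fully rigorous treatment would also note that small balls around $u^\ast$ lie in $\mathcal{T}(u_0)$ because $T(u^\ast)=0$ and $u^\ast\in\mathcal{T}(u_0)$.
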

\begin{proof}
  Let $u_k \in B(u^\ast, L^{-2} H)$. Hence, it holds for all $t \in [0, 1]$
  that
  \[
  t \norm{g(u_k, t)}_U 
  \overset{\text{A\ref{ass:fLipschitz}}}{\le} L t^2 \norm{f(u_k)}_U
  = L t^2 \norm{f(u_k) - f(u^\ast)}_U 
  \overset{\text{A\ref{ass:fLipschitz}}}{\le} L^2 t^2 \norm{u_k - u^\ast}_U < H.
  \]
  Thus, $\mathcal{B}_H(u_k) = \{1\}$ and $t_k = 1$ by virtue of \eqref{eqn:BSC}.
  \qed
\end{proof}

\begin{lemma}
  \label{lem:lowerStepsizeBound}
  If A\ref{ass:validIni} and A\ref{ass:fLipschitz} hold, then \eqref{eqn:BSC}
  generates for all $u_k \in \mathcal{R}_r \cap \mathcal{T}(u_0)$ step sizes
  that are either $t_k = 1$ or have the lower bounds
  \[
  t_k \ge \frac{\sqrt{H}}{\sqrt{L \norm{f(u_k)}_U}}
  > \frac{\sqrt{H}}{\sqrt{rL\norm{F(u_k)}_V}}
  \ge \frac{\sqrt{H}}{\sqrt{rL\norm{F(u_0)}_V}}.
  \]
\end{lemma}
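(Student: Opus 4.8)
The plan is a short case distinction driven by the definition \eqref{eqn:BSC} of $t_k = \min\mathcal{B}_H(u_k)$. If $t_k = 1$ there is nothing to prove, so assume $t_k < 1$. Since $\mathcal{B}_H(u_k)$ always contains $1$, the only way $t_k < 1$ can arise is that $t_k$ belongs to $\{t \in [0,1] \mid H = t\norm{g(u_k,t)}_U\}$; this set is closed because $t \mapsto t\norm{g(u_k,t)}_U$ is continuous (as already noted after \eqref{eqn:BSC}), so the minimum is attained and we have the exact identity $H = t_k\norm{g(u_k,t_k)}_U$. In particular $\norm{g(u_k,t_k)}_U > 0$, which forces $f(u_k) \neq 0_U$ (otherwise $g(u_k,\cdot) \equiv 0_U$ and $\mathcal{B}_H(u_k) = \{1\}$); together with $u_k \in \mathcal{R}_r$ this also gives $\norm{F(u_k)}_V > 0$, so all the quotients in the claim are well-defined.

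Next I would estimate $\norm{g(u_k,t_k)}_U$ exactly as in the proof of Lemma~\ref{lem:localFullSteps}: by A\ref{ass:fLipschitz},
\[
\norm{g(u_k,t_k)}_U = \norm{f(u_k - t_k f(u_k)) - f(u_k)}_U \le L\, t_k \norm{f(u_k)}_U .
\]
Substituting this into $H = t_k \norm{g(u_k,t_k)}_U$ yields $H \le L\, t_k^2 \norm{f(u_k)}_U$, and since $\norm{f(u_k)}_U > 0$, solving for $t_k$ gives the first bound $t_k \ge \sqrt{H}/\sqrt{L\norm{f(u_k)}_U}$.

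Finally I would feed in the two remaining structural facts. The strict inequality $\norm{f(u_k)}_U < r\norm{F(u_k)}_V$ from $u_k \in \mathcal{R}_r$ turns the first bound into the strict middle bound, and $u_k \in \mathcal{T}(u_0)$ means $T(u_k) \le T(u_0)$, i.e. $\norm{F(u_k)}_V \le \norm{F(u_0)}_V$, which yields the last bound (well-defined because $\norm{F(u_0)}_V > 0$ by A\ref{ass:validIni}). I do not expect a genuine obstacle here; the only point needing a moment's care is the case split — that $t_k < 1$ really forces the defining equation to hold with equality rather than $t_k$ being an unattained infimum — which is exactly what the continuity/closedness remark above settles.
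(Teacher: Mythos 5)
Your argument is correct and coincides with the paper's proof: the paper compresses your reasoning into the single chain $t_k^2 = t_k H/\norm{g(u_k,t_k)}_U \ge H/(L\norm{f(u_k)}_U) > H/(rL\norm{F(u_k)}_V) \ge H/(rL\norm{F(u_0)}_V)$, using exactly the same ingredients (\eqref{eqn:BSC}, A\ref{ass:fLipschitz}, the $\mathcal{R}_r$ bound, and $u_k \in \mathcal{T}(u_0)$). Your extra remarks about closedness of the level set in \eqref{eqn:BSC} and well-definedness of the quotients are correct but are taken for granted in the paper.
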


\textit{Proof~}
  If $t_k < 1$, then
  \[
  t_k^2 \overset{\eqref{eqn:BSC}}{=} \frac{t_k H}{\norm{g(u_k, t_k)}_U}
  \overset{\text{A\ref{ass:fLipschitz}}}{\ge}
  \frac{H}{L \norm{f(u_k)}_U}
  > \frac{H}{rL\norm{F(u_k)}_V}
  \ge \frac{H}{rL\norm{F(u_0)}_V}. \qedhere
  \]

\begin{lemma}
  \label{lem:upperStepsizeBound}
  Let A\ref{ass:validIni} and A\ref{ass:gamma} hold and let $\bar{t}
  \in (0, 1)$ and $\eta > 0$. Then there exists an $\widebar{H} > 0$ such that
  for all $H \in (0, \widebar{H}]$ and $u \in \mathcal{R}_r \cap
  \mathcal{T}(u_0)$ with $\norm{f(u)}_U \ge \eta$ it holds that $\min
  \mathcal{B}_H(u) \le \bar{t}$.
\end{lemma}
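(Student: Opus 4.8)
\textit{Proof proposal.}
The plan is to turn the defining equation of $\mathcal{B}_H(u)$ into an intermediate value argument for the scalar function $\phi(t) := t\norm{g(u,t)}_U$, using the lower bound supplied by A\ref{ass:gamma}. The key point is that A\ref{ass:gamma} produces constants that depend only on the threshold on $\norm{f(u)}_U$ and not on $u$ itself, so the resulting bound $\widebar{H}$ will automatically be uniform.

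First I would apply A\ref{ass:gamma} with $\eta/2$ in place of $\eta$ (this harmless shrinking is only to accommodate the non-strict inequality $\norm{f(u)}_U \ge \eta$ in the statement, since $\eta > \eta/2$), obtaining $\gamma, t_\gamma > 0$ such that $\norm{g(u,t)}_U = \norm{f(u-tf(u)) - f(u)}_U \ge \gamma t$ for all $t \in [0, t_\gamma]$ and all $u \in \mathcal{R}_r \cap \mathcal{T}(u_0)$ with $\norm{f(u)}_U > \eta/2$. Then I would set $\tau := \min\{t_\gamma, \bar t\} \in (0,1)$ and define $\widebar{H} := \gamma \tau^2 > 0$.

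Next, fix any $H \in (0, \widebar{H}]$ and any admissible $u$, i.e., $u \in \mathcal{R}_r \cap \mathcal{T}(u_0)$ with $\norm{f(u)}_U \ge \eta$. The map $\phi \colon [0,1] \to \R$, $\phi(t) = t\norm{g(u,t)}_U$, is continuous (since $t \mapsto u - tf(u)$ is continuous, stays in $D$ as implicitly required by A\ref{ass:omega} and A\ref{ass:gamma}, and $f$ is continuous on $D$ by A\ref{ass:fLipschitz}) and satisfies $\phi(0) = 0$ because $g(u,0) = 0$. From A\ref{ass:gamma} applied at $t = \tau \le t_\gamma$ we get $\phi(\tau) = \tau\norm{g(u,\tau)}_U \ge \gamma\tau^2 = \widebar{H} \ge H$. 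Hence the intermediate value theorem yields some $t^\ast \in [0,\tau]$ with $\phi(t^\ast) = H$, so that $t^\ast \in \mathcal{B}_H(u)$, and since $t^\ast \le \tau \le \bar t$ this gives $\min\mathcal{B}_H(u) \le \bar t$, as claimed.

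There is no deep obstacle here; the only things to watch are the strict-versus-non-strict inequality in A\ref{ass:gamma} (dealt with by using $\eta/2$), the uniformity of $\widebar{H}$ in $u$ (automatic, since $\gamma$ and $t_\gamma$ are furnished by A\ref{ass:gamma} independently of $u$), and that the segment $u - tf(u)$ lies in $D$ so that $\phi$ is defined and continuous on $[0,\tau]$, which is part of the standing framework underlying A\ref{ass:omega} and A\ref{ass:gamma}.
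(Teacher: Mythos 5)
Your proof is correct and is essentially the contrapositive reformulation of the paper's own proof, which proceeds by contradiction but relies on exactly the same ingredients: apply A\ref{ass:gamma} to get $\gamma, t_\gamma$, test at $\tau = \min\{t_\gamma, \bar t\}$, and recognize that $\widebar{H} := \gamma\tau^2$ works. Your direct version is arguably cleaner in that it makes $\widebar{H}$ explicit, applies the intermediate value theorem forwards instead of leaving the inequality $t\norm{g(u,t)}_U \le H$ for $t < \min\mathcal{B}_H(u)$ implicit, and tidies the small mismatch between the lemma's $\norm{f(u)}_U \ge \eta$ and A\ref{ass:gamma}'s strict $\norm{f(u)}_U > \eta$ by invoking A\ref{ass:gamma} at $\eta/2$, a detail the paper glosses over.
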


\textit{Proof by contradiction~}
  We assume to the contrary that for all $\widebar{H} > 0$ there exists an $H
  \in (0, \widebar{H}]$ and a $u \in \mathcal{R}_r \cap \mathcal{T}(u_0)$
  satisfying $\norm{f(u)}_U \ge \eta$ and $\min \mathcal{B}_H(u) > \bar{t}$.
  Then, A\ref{ass:gamma} guarantees the existence of $\gamma$, $t_\gamma > 0$
  such that for $t := \min \{ t_\gamma, \bar{t} \} < \min \mathcal{B}_H(u)$ we
  obtain from \eqref{eqn:BSC} that
  \[
  \widebar{H} \ge H \ge t \norm{g(u, t)}_U \ge \gamma t^2 > 0.
  \]
  Because $\eta$ and thus $\gamma$ and $t$ are independent of $\widebar{H}$,
  we obtain a contradiction for $\widebar{H} \to 0$.
  \qedhere

\begin{lemma}
  \label{lem:dampedStepBound}
  Let A\ref{ass:validIni}, A\ref{ass:fLipschitz}, and A\ref{ass:gamma} hold and
  let $\eta > 0$. Then there exists an $\widebar{H} > 0$ such that for all $H
  \in (0, \widebar{H}]$ and $u \in \mathcal{R}_r \cap \mathcal{T}(u_0)$ it holds
  that
  \[
  \norm{f(u)}_U \min \mathcal{B}_H(u) \le \eta.
  \]
\end{lemma}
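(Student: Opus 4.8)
The plan is a short case distinction on the size of $\norm{f(u)}_U$. If $\norm{f(u)}_U \le \eta$, then, since $1 \in \mathcal{B}_H(u)$ by the definition \eqref{eqn:BSC} and hence $\min \mathcal{B}_H(u) \le 1$ irrespective of $H$, we immediately obtain $\norm{f(u)}_U \min \mathcal{B}_H(u) \le \eta$; no smallness of $H$ is needed in this case. It therefore remains to treat $u \in \mathcal{R}_r \cap \mathcal{T}(u_0)$ with $\norm{f(u)}_U > \eta$, and for these points we will choose $\widebar{H}$ small enough to force $\min \mathcal{B}_H(u)$ to be correspondingly small, using Lemma~\ref{lem:upperStepsizeBound}.

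The key observation is that $\norm{f(u)}_U$ is bounded uniformly on $\mathcal{R}_r \cap \mathcal{T}(u_0)$: for such $u$ the definition of $\mathcal{R}_r$ gives $\norm{f(u)}_U < r \norm{F(u)}_V$, while $u \in \mathcal{T}(u_0)$ means $T(u) \le T(u_0)$, i.e.\ $\norm{F(u)}_V \le \norm{F(u_0)}_V$, so that $\norm{f(u)}_U < r \norm{F(u_0)}_V =: F_{\max}$, which is a finite positive constant by A\ref{ass:validIni}. If $\eta \ge F_{\max}$, then the case $\norm{f(u)}_U > \eta$ is vacuous and any $\widebar{H} > 0$ works. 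Otherwise we set $\bar{t} := \eta / F_{\max} \in (0, 1)$ and apply Lemma~\ref{lem:upperStepsizeBound} with this $\bar{t}$ and the given $\eta$: it provides an $\widebar{H} > 0$ such that for every $H \in (0, \widebar{H}]$ and every $u \in \mathcal{R}_r \cap \mathcal{T}(u_0)$ with $\norm{f(u)}_U \ge \eta$ one has $\min \mathcal{B}_H(u) \le \bar{t}$. Since $\norm{f(u)}_U > \eta$ in particular implies $\norm{f(u)}_U \ge \eta$, multiplying the two bounds yields $\norm{f(u)}_U \min \mathcal{B}_H(u) < F_{\max}\, \bar{t} = \eta$, which closes the remaining case.

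I do not expect a genuine obstacle here: once Lemma~\ref{lem:upperStepsizeBound} is at hand, the only real ingredient is the uniform a priori bound $\norm{f(u)}_U < r \norm{F(u_0)}_V$ on $\mathcal{R}_r \cap \mathcal{T}(u_0)$, which follows directly from the definitions of $\mathcal{R}_r$ and $\mathcal{T}(u_0)$ together with A\ref{ass:validIni}; the rest is bookkeeping. Note that A\ref{ass:fLipschitz} does not seem to be strictly required for this particular estimate (it is listed for uniformity with the neighboring lemmas, and one could alternatively invoke Lemma~\ref{lem:lowerStepsizeBound} to see that the damped step sizes stay bounded away from zero, though that is not needed here).
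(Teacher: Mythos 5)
Your proof is correct and follows essentially the same route as the paper: fix $\bar{t}$ so that $r\norm{F(u_0)}_V\,\bar{t} \le \eta$, invoke Lemma~\ref{lem:upperStepsizeBound} to get $\widebar{H}$, and split off the trivial case $\norm{f(u)}_U \le \eta$. Your side remark that A\ref{ass:fLipschitz} is not actually used is also accurate --- the paper's proof relies only on A\ref{ass:validIni} and on Lemma~\ref{lem:upperStepsizeBound} (hence A\ref{ass:gamma}).
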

\begin{proof}
  We choose $\bar{t} \in (0, 1)$ sufficiently small so that it satisfies $r
  \norm{F(u_0)}_V \bar{t} \le \eta.$
  Then, Lemma \ref{lem:upperStepsizeBound} yields the existence of an
  $\widebar{H} > 0$ such that for all $H \in (0, \widebar{H}]$ and all $u \in
  \mathcal{R}_r \cap \mathcal{T}(u_0)$ with $\norm{f(u)}_U \ge \eta$ it holds
  that $\min \mathcal{B}_H(u) \le \bar{t}$. Hence,
  \[
  \norm{f(u)}_U \min \mathcal{B}_H(u)
  \overset{\text{A\ref{ass:validIni}}}{<} 
  r \norm{F(u)}_V \bar{t} \le r \norm{F(u_0)} \bar{t} \le \eta.
  \]
  For the remaining $u \in \mathcal{R}_r \cap \mathcal{T}(u_0)$ the assertion
  holds by virtue of $\norm{f(u)}_U < \eta$.
  \qed
\end{proof}

\subsection{Finite arclength of generalized Newton paths}

In the next step, we study the generalized Newton paths given by
\eqref{eqn:genDavidenko}. 
\begin{lemma}
  \label{lem:Fdescent}
  If A\ref{ass:validIni}, A\ref{ass:kappa}, A\ref{ass:fLipschitz} and $u_k \in
  \mathcal{R}_r \cap \mathcal{T}(u_0)$ hold, then there exists $\bar{t} > 0$
  such that \eqref{eqn:genDavidenko} has a unique local solution $u^k(t) \in
  \mathcal{R}_r \cap \mathcal{T}(u_k)$ for $t \in [0, \bar{t})$ which satisfies
  \[
  \norm{F(u^k(t))}_V \le e^{-(1-\kappa)t} \norm{F(u_k)}_V \quad 
  \text{for all } t \in [0, \bar{t}).
  \]
\end{lemma}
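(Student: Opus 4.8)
The plan is to establish the three assertions of the lemma — local existence and uniqueness for \eqref{eqn:genDavidenko}, confinement of the trajectory to $\mathcal{R}_r \cap \mathcal{T}(u_k)$, and the exponential decay of $\norm{F(u^k(\cdot))}_V$ — in that order, with the last two interlocking through a continuation (barrier) argument.

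First I would produce a local solution. By A\ref{ass:fLipschitz} the field $f$ is Lipschitz on $\mathcal{T}(u_0) \ni u_k$, so the Picard--Lindel\"of theorem for ordinary differential equations with values in the Banach space $U$ (contraction mapping on a small ball of $C^0([0,\bar t_0], U)$) yields some $\bar t_0 > 0$ together with a unique $C^1$ curve $u^k$ on $[0,\bar t_0)$ solving \eqref{eqn:genDavidenko}; since $(u^k)' = -f(u^k)$ is continuous, the maps $t \mapsto \norm{f(u^k(t))}_U = \norm{(u^k)'(t)}_U$ and $t \mapsto \norm{F(u^k(t))}_V$ are continuous. As $u_k \in \mathcal{R}_r$ — which forces $\norm{F(u_k)}_V > 0$, since $\norm{F(u_k)}_V = 0$ would violate $r^{-1}\norm{F(u_k)}_V < \norm{f(u_k)}_U$ — the two strict inequalities defining $\mathcal{R}_r$ hold at $t = 0$ and hence persist for small $t$; after shrinking $\bar t_0$ I may therefore assume $u^k(t) \in \mathcal{R}_r$ for all $t \in [0, \bar t_0)$.

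Next I would introduce $\phi(t) := T(u^k(t)) = \tfrac12 \norm{F(u^k(t))}_V^2$ and differentiate. The chain rule applies along the $C^1$ curve $u^k$ because $F$ is continuously G\^ateaux differentiable (write $F(u+v) - F(u) = \int_0^1 F'(u+sv)v\,\ud s$ and use joint continuity of $F'$), giving $\phi'(t) = -\inprod{F(u^k(t)),\, F'(u^k(t)) f(u^k(t))}_V$ by linearity of $F'$ in its second argument. Decomposing $F'(u)f(u) = F(u) - \bigl(F(u) - F'(u)f(u)\bigr)$ and using Cauchy--Schwarz together with A\ref{ass:kappa}, which is available as long as $u^k(t) \in \mathcal{R}_r \cap \mathcal{T}(u_0)$, I obtain
\[
  \inprod{F(u^k(t)),\, F'(u^k(t)) f(u^k(t))}_V \;\ge\; \norm{F(u^k(t))}_V^2 - \kappa\,\norm{F(u^k(t))}_V^2 \;=\; 2(1-\kappa)\,\phi(t),
\]
so that $\phi'(t) \le -2(1-\kappa)\phi(t)$, and Gr\"onwall's inequality delivers $\phi(t) \le e^{-2(1-\kappa)t}\phi(0)$, which is the asserted bound.

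The delicate point, and the main obstacle, is that A\ref{ass:kappa} requires $u^k(t) \in \mathcal{T}(u_0)$, whereas a priori this is known only once the descent estimate is available — so the argument must be closed by continuation rather than applied directly. I would let $t^\ast$ be the infimum of times in $[0,\bar t_0)$ at which $\phi(t) > T(u_k)$ (if no such time exists there is nothing to do). Since A\ref{ass:kappa} holds at $u_k$, one has $\phi'(0) \le -2(1-\kappa)\phi(0) < 0$, so $t^\ast > 0$; on $[0,t^\ast]$ continuity gives $\phi \le T(u_k)$, hence $u^k(s) \in \widetilde{\mathcal{T}}(u_k)$ for all $s \le t^\ast$, and the reparametrized restriction $u^k|_{[0,s]}$ is a continuous path in $\widetilde{\mathcal{T}}(u_k)$ from $u_k$ to $u^k(s)$, so $u^k(s) \in \mathcal{T}(u_k) \subseteq \mathcal{T}(u_0)$ (the inclusion by concatenating with a path from $u_0$ to $u_k$). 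Thus A\ref{ass:kappa} is legitimate on $[0,t^\ast]$, $\phi$ is non-increasing there, and at $t^\ast$ we have $\phi(t^\ast) \le T(u_k)$ with $\phi'(t^\ast) \le -2(1-\kappa)\phi(t^\ast) \le 0$; a short case distinction (according as $\phi(t^\ast) < T(u_k)$, or $\phi(t^\ast) = T(u_k)$ which forces $\phi'(t^\ast) < 0$) shows $\phi$ stays $\le T(u_k)$ in a right neighborhood of $t^\ast$, contradicting the definition of $t^\ast$. Hence $\phi(t) \le T(u_k)$ on all of $[0,\bar t_0)$, so $u^k(t) \in \mathcal{R}_r \cap \mathcal{T}(u_k)$ throughout, A\ref{ass:kappa} is valid there, and the Gr\"onwall estimate of the preceding paragraph holds for every $t \in [0,\bar t) := [0,\bar t_0)$. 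Everything else is routine once the ODE solution and the chain rule are secured; a minor caveat worth noting is that Peano's theorem fails in infinite dimensions, which is why the Lipschitz hypothesis A\ref{ass:fLipschitz} (rather than mere continuity of $f$) is essential already for local existence.
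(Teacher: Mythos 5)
Your proof follows essentially the same route as the paper's: Picard--Lindel\"of via A\ref{ass:fLipschitz} for local existence/uniqueness, openness of $\mathcal{R}_r$ to keep the trajectory $r$-regular for small $t$, then Cauchy--Schwarz plus A\ref{ass:kappa} to get $\phi'(t) \le -2(1-\kappa)\phi(t)$, and Gr\"onwall to close. The one place you are more explicit than the paper is the continuation/barrier argument establishing that $u^k(t)$ actually remains in $\mathcal{T}(u_0)$ so that A\ref{ass:kappa} is legitimately applicable along the entire trajectory; the paper treats this as immediate from the Gr\"onwall bound (which is a mild bootstrap since the estimate implies the level-set membership, which in turn is what licenses the estimate), whereas you spell out the infimum-of-exit-times argument. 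That extra care is correct and a genuine improvement in rigor, but it does not change the underlying method.
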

\begin{proof}
  The Picard--Lindel\"of theorem \cite[II.7, exercise 3]{Amann1990} yields with
  A\ref{ass:fLipschitz} the existence of a unique local solution $u^k(t)$ to
  \eqref{eqn:genDavidenko} in some neighborhood $(-\bar{t}, \bar{t})$ of $t =
  0$. Without loss of generality, $\bar{t} > 0$ is small enough to ensure
  $u^k(t) \in \mathcal{R}_r$ for $t \in [0, \bar{t})$ because $\mathcal{R}_r$ is
  open. For ease of notation, we abbreviate $u^k(t)$ by $u$. The Cauchy--Schwarz
  inequality and A\ref{ass:kappa} show that the level function is nonincreasing
  along this solution because
  \begin{align*}
    \frac{\ud}{\ud t} T(u) &= \inprod{F(u), F'(u) \frac{\ud u}{\ud t}}_V
    = -\inprod{F(u), F'(u) f(u)}_V\\
    &= -\norm{F(u)}_V^2 + \inprod{F(u), F(u) - F'(u) f(u)}_V\\
    &\le -\norm{F(u)}_V^2 + \kappa \norm{F(u)}_V^2
    = -2(1-\kappa) T(u) \le 0.
  \end{align*}
  Gronwall's inequality (see, e.g., \cite{Amann1990}) yields
  \[
  T(u^k(t)) \le e^{-2 (1-\kappa) t} T(u_k)
  \]
  and thus $u^k(t) \in \mathcal{T}(u_k)$ for $t \in [0, \bar{t})$. The assertion
  follows after multiplication by two and taking square roots.
  \qed
\end{proof}

We show in Theorem \ref{thm:arclength} below that the quantities in the
following definition are well-defined under suitable assumptions.
\begin{definition}
  \label{def:regularpart}
  For $r \in (1,\infty)$, we define the \emph{$r$-regular part $u_r^k$} of the
  generalized Newton path $u^k$ as the solution to the initial value problem
  \[
  \frac{\ud u_r^k}{\ud t}(t) = 
  \begin{cases}
    -f(u_r^k(t)) & \text{for } u_r^k(t) \in \mathcal{R}_r,\\
    0 & \text{otherwise},
  \end{cases}
  \quad \text{for } t \in [0, \infty),
  \quad \text{with } u_r^k(0) = u_k.
  \]
  We denote its limit by $u_k^\ast = \lim_{t \to \infty} u^k_r(t)$ and define
  $t_k^\ast = \inf \{t \in [0, \infty) \mid u_r^k(t) \not\in \mathcal{R}_r \}$
  with the usual convention that $\inf \varnothing = \infty$.
\end{definition}

\begin{theorem}
  \label{thm:arclength}
  Let A\ref{ass:validIni}, A\ref{ass:kappa}, and A\ref{ass:fLipschitz} hold. If
  $u_k \in \mathcal{R}_r \cap \mathcal{T}(u_0)$, then the $r$-regular part of
  the generalized Newton path exists uniquely and has a finite arclength
  $\ell(u_r^k)$
  satisfying
  \[
  \norm{u_k - u_k^\ast}_U \le
  \ell(u_r^k) < \frac{r}{1 - \kappa} \norm{F(u_k)}_V 
  < \frac{r^2}{1 - \kappa} \norm{f(u_k)}_U.
  \]
  If $u_k^\ast \in \mathcal{R}_r$, then $F(u_k^\ast) = 0$.
\end{theorem}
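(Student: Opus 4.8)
The plan is to construct the $r$-regular part $u_r^k$ by continuing the local flow supplied by Lemma~\ref{lem:Fdescent} and to use a uniform a priori arclength bound both to exclude finite-time blow-up and to force convergence at the end of the flow. Starting from $u_k\in\mathcal{R}_r\cap\mathcal{T}(u_0)$, Lemma~\ref{lem:Fdescent} gives a unique local solution of \eqref{eqn:genDavidenko} that stays in $\mathcal{R}_r\cap\mathcal{T}(u_k)$; since $\mathcal{T}(u_k)\subseteq\mathcal{T}(u_0)$ (concatenate a connecting curve for $u_k\in\mathcal{T}(u_0)$ with one in $\widetilde{\mathcal{T}}(u_k)$), we may re-apply Lemma~\ref{lem:Fdescent} at every point reached. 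This produces a maximal solution $u^k$ on an interval $[0,t_k^\ast)$ that remains in $\mathcal{R}_r\cap\mathcal{T}(u_0)$ and inherits the propagated decay estimate $\norm{F(u^k(t))}_V\le e^{-(1-\kappa)t}\norm{F(u_k)}_V$, and its maximal existence time coincides with the $t_k^\ast$ of Definition~\ref{def:regularpart}.

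I would then estimate the arclength: for every $\tau<t_k^\ast$,
\[
  \int_0^\tau\norm{\tfrac{\ud u^k}{\ud t}(t)}_U\,\ud t
  =\int_0^\tau\norm{f(u^k(t))}_U\,\ud t
  <\int_0^\tau r\,\norm{F(u^k(t))}_V\,\ud t
  \le r\,\norm{F(u_k)}_V\int_0^\tau e^{-(1-\kappa)t}\,\ud t
  <\frac{r}{1-\kappa}\norm{F(u_k)}_V,
\]
where the first strict inequality uses $u^k(t)\in\mathcal{R}_r$ (the integrand gap being positive and continuous) and the second uses Lemma~\ref{lem:Fdescent}. As this is uniform in $\tau$, the arclength $\ell(u_r^k)$ is finite and satisfies the claimed estimate, the last inequality following from $\norm{F(u_k)}_V<r\norm{f(u_k)}_U$ for $u_k\in\mathcal{R}_r$. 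Completeness of $U$ and $\int_0^{t_k^\ast}\norm{f(u^k)}_U\,\ud t<\infty$ make $u^k(t)$ Cauchy as $t\uparrow t_k^\ast$ (and along any $t\to\infty$ if $t_k^\ast=\infty$), so $u_k^\ast=\lim u_r^k(t)$ exists; passing to the limit in $u^k(0)-u^k(\tau)=-\int_0^\tau f(u^k)\,\ud t$ gives $\norm{u_k-u_k^\ast}_U\le\ell(u_r^k)$. On $[t_k^\ast,\infty)$ (if $t_k^\ast<\infty$) the solution of the Definition~\ref{def:regularpart} IVP is the constant $u_k^\ast$, and uniqueness throughout follows from uniqueness of \eqref{eqn:genDavidenko} while inside $\mathcal{R}_r$.

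For the last assertion I would show that $t_k^\ast<\infty$ forces $u_k^\ast\notin\mathcal{R}_r$: if $u_k^\ast=\lim_{t\uparrow t_k^\ast}u^k(t)$ lay in the open set $\mathcal{R}_r$, then $u_k^\ast\in\mathcal{R}_r\cap\mathcal{T}(u_0)$, and Lemma~\ref{lem:Fdescent} applied at $u_k^\ast$ together with uniqueness of \eqref{eqn:genDavidenko} would extend $u^k$ past $t_k^\ast$ within $\mathcal{R}_r$, contradicting that $t_k^\ast$ is the first exit time. Hence $u_k^\ast\in\mathcal{R}_r$ implies $t_k^\ast=\infty$, so $\norm{F(u^k(t))}_V\le e^{-(1-\kappa)t}\norm{F(u_k)}_V\to0$; continuity of $F$ at $u_k^\ast\in\mathcal{R}_r\subseteq D$ then yields $F(u_k^\ast)=0_V$.

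The main obstacle is the bookkeeping at the endpoint $t_k^\ast$: the right-hand side defining $u_r^k$ jumps across $\partial\mathcal{R}_r$, so global existence, uniqueness, and the dichotomy ``$t_k^\ast=\infty$ or $u_k^\ast\notin\mathcal{R}_r$'' cannot be read off from Picard--Lindel\"of and must be assembled from openness of $\mathcal{R}_r$, the a priori arclength bound, completeness of $U$, and ODE uniqueness; a secondary point requiring care is verifying that the whole path stays in $\mathcal{T}(u_0)$ so that A\ref{ass:kappa} and A\ref{ass:fLipschitz} remain applicable, via monotonicity of $T$ along the flow and concatenation of connecting curves.
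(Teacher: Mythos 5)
Your proof is correct and follows essentially the same route as the paper's: local existence/uniqueness via Lemma~\ref{lem:Fdescent} and Picard--Lindel\"of, the identical arclength integral estimate using the $\mathcal{R}_r$ bound and the exponential decay, and the same deduction that $u_k^\ast\in\mathcal{R}_r$ forces $t_k^\ast=\infty$ and hence $F(u_k^\ast)=0$. You fill in several details the paper leaves implicit (that $\mathcal{T}(u_k)\subseteq\mathcal{T}(u_0)$, the Cauchy/completeness argument for the existence of $u_k^\ast$, and the openness argument for the exit-time dichotomy), but these are exactly the steps needed to make the paper's terse proof fully rigorous, not a different approach.
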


\textit{Proof~}
  The unique local solution of Lemma \ref{lem:Fdescent} can be extended uniquely
  in $\mathcal{T}(u_k)$ by repeated application of the Picard--Lindel\"of
  theorem either until $u^k(t) \not \in \mathcal{R}_r$ for some $t = t_k^\ast$
  or to the whole interval $t \in [0, \infty)$. In the first case, the
  $r$-regular part is uniquely determined by $u_r^k(t) = u_r^k(t_k^\ast)$ for
  all $t \ge t_k^\ast$. We can now use the definition of $\mathcal{R}_r$ and
  Lemma \ref{lem:Fdescent} in order to show
  \begin{align*}
    \ell(u_r^k) &= \int_{0}^{\infty} \norm{\frac{\ud u_r^k}{\ud t}(t)}_U \ud t
    = \int_{0}^{t_k^\ast} \norm{f(u^k(t))}_U \ud t
    < r \int_{0}^{t_k^\ast} \norm{F(u^k(t))}_V \ud t \\
    &\le r \int_{0}^{\infty} e^{-(1-\kappa)t} \ud t \norm{F(u_k)}_V
    = \frac{r}{1-\kappa} \norm{F(u_k)}_V < \frac{r^2}{1-\kappa}
    \norm{f(u_k)}_U. 
  \end{align*}
  We obtain the lower arclength bound by noting that the shortest path between
  $u_k$ and $u_k^\ast$ has arclength $\norm{u_k - u_k^\ast}_U$. If $u_k^\ast \in
  \mathcal{R}_r$, then $t_k^\ast = \infty$ and Lemma \ref{lem:Fdescent} reveals
  \[
  \norm{F(u_k^\ast)}_V \le \lim_{t \to \infty} e^{-(1-\kappa)t}\norm{F(u_k)}_V =
  0.
  \qedhere
  \]

\subsection{Local convergence} 

We use the next lemma to prove discrete descent of the residual norm.
\begin{lemma}
  \label{lem:kappaScaled}
  A\ref{ass:kappa} holds if and only if for all $u \in \mathcal{R}_r \cap
  \mathcal{T}(u_0)$ and $t \in [0, 1]$
  \[
  \norm{F(u) - t F'(u) f(u)}_V 
  \le \left[ 1 - (1-\kappa) t \right] \norm{F(u)}_V.
  \]
\end{lemma}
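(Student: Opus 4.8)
The plan is to prove both implications by a direct estimate using the triangle inequality and convexity in the scalar parameter $t$. For the nontrivial direction, I would assume A\ref{ass:kappa} holds and, for fixed $u \in \mathcal{R}_r \cap \mathcal{T}(u_0)$ and $t \in [0,1]$, write $F(u) - t F'(u) f(u)$ as a convex combination of the endpoints $t = 0$ and $t = 1$, namely $F(u) - t F'(u) f(u) = (1-t) F(u) + t\bigl(F(u) - F'(u) f(u)\bigr)$. Applying the triangle inequality then gives $\norm{F(u) - t F'(u) f(u)}_V \le (1-t)\norm{F(u)}_V + t\norm{F(u) - F'(u) f(u)}_V$, and A\ref{ass:kappa} bounds the second term by $t\kappa\norm{F(u)}_V$, so the sum is $\bigl[(1-t) + t\kappa\bigr]\norm{F(u)}_V = \bigl[1 - (1-\kappa)t\bigr]\norm{F(u)}_V$, which is exactly the claimed inequality.

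For the converse, I would simply specialize the scaled inequality to $t = 1$: this yields $\norm{F(u) - F'(u) f(u)}_V \le \bigl[1 - (1-\kappa)\bigr]\norm{F(u)}_V = \kappa\norm{F(u)}_V$, which is A\ref{ass:kappa} verbatim. Hence the two statements are equivalent on $\mathcal{R}_r \cap \mathcal{T}(u_0)$.

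There is essentially no hard part here; the only thing to be careful about is that the argument uses nothing more than linearity of $F'(u)$ in its second argument (so that $t F'(u) f(u) = F'(u)\bigl(t f(u)\bigr)$ is harmless and the convex-combination identity is valid) together with the triangle inequality in the Hilbert space $V$. No compactness, no differentiability beyond what is already assumed, and no properties of $M$ are needed. I would present the proof in the compact two-line form: the "if" direction as the convexity estimate above, and the "only if" direction as the one-line specialization $t = 1$.
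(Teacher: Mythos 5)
Your proof is correct and matches the paper's argument: the paper invokes convexity of $\varphi(t) = \norm{F(u) - t F'(u) f(u)}_V$ to get $\varphi(t) \le (1-t)\varphi(0) + t\varphi(1)$, which is exactly your explicit convex-combination-plus-triangle-inequality computation, and both handle the converse by specializing to $t=1$. No differences worth noting.
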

\textit{Proof~}
  As in \cite{Potschka2016}, the nontrivial direction of the proof follows from
  the convexity of the functional $\varphi(t) = \norm{F(u) - t F'(u) f(u)}_V$
  and A\ref{ass:kappa} according to
  \[
  \varphi(t) \le (1-t)\varphi(0) + t \varphi(1) 
  \le \left[ (1-t) + \kappa t \right] \norm{F(u)}_V.
  \qedhere
  \]

\begin{lemma}
  \label{lem:Fdecrease}
  Let A\ref{ass:validIni}, A\ref{ass:kappa} and A\ref{ass:omega} hold. If $u_k
  \in \mathcal{R}_r \cap \mathcal{T}(u_0)$, then
  \[
  \norm{F(u_{k+1})}_V \le \left[ 1 - (1-\kappa) t_k + \frac{\omega}{2}
  \norm{f(u_k)}_U t_k^2 \right] \norm{F(u_k)}_V.
  \]
  Furthermore, if there exists a $\theta < 1$ such that the step size sequence
  satisfies
  \begin{equation*}
  \omega t_k \norm{f(u_k)}_U \le 2 \theta (1 - \kappa),
  \end{equation*}
  then
  \[
  \norm{F(u_{k+1})}_V \le \left[ 1 - (1-\theta) (1-\kappa) t_k \right]
  \norm{F(u_k)}_V.
  \]
\end{lemma}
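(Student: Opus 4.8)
The plan is to estimate $\norm{F(u_{k+1})}_V$ through an integral (Taylor-with-remainder) representation of $F$ along the line segment joining $u_k$ to $u_{k+1}$. Recall from \eqref{eqn:Newton} that $u_{k+1} = u_k + t_k \delta u_k = u_k - t_k f(u_k)$. First I would note that the curve $s \mapsto u_k - s f(u_k)$, $s \in [0, t_k] \subseteq [0,1]$, stays in $D$ — this is implicit in A\ref{ass:omega}, which invokes $F'(u - t f(u))$ for $u \in \mathcal{T}(u_0)$ and $t \in [0,1]$, and here $u_k \in \mathcal{R}_r \cap \mathcal{T}(u_0)$ by hypothesis. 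Since $F$ is continuously G\^ateaux differentiable, the composition $s \mapsto F(u_k - s f(u_k))$ is continuously differentiable from $[0,t_k]$ into $V$ with derivative $-F'(u_k - s f(u_k)) f(u_k)$, so the fundamental theorem of calculus for $V$-valued $C^1$ curves gives
\[
F(u_{k+1}) = F(u_k) - \int_0^{t_k} F'(u_k - s f(u_k)) f(u_k) \, \ud s.
\]

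Next I would peel off the linearization at $u_k$ by adding and subtracting $t_k F'(u_k) f(u_k)$:
\[
F(u_{k+1}) = \bigl[ F(u_k) - t_k F'(u_k) f(u_k) \bigr] + \int_0^{t_k} \bigl[ F'(u_k) - F'(u_k - s f(u_k)) \bigr] f(u_k) \, \ud s.
\]
The triangle inequality bounds $\norm{F(u_{k+1})}_V$ by the sum of the $V$-norms of the two summands. The first summand is controlled by the scaled $\kappa$-condition, Lemma \ref{lem:kappaScaled}, which yields $\norm{F(u_k) - t_k F'(u_k) f(u_k)}_V \le [1 - (1-\kappa) t_k] \norm{F(u_k)}_V$. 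For the integral term, A\ref{ass:omega} provides the pointwise bound $\norm{[F'(u_k) - F'(u_k - s f(u_k))] f(u_k)}_V \le \omega s \norm{f(u_k)}_U \norm{F(u_k)}_V$, and integrating $\int_0^{t_k} s \, \ud s = t_k^2 / 2$ produces the contribution $\tfrac{\omega}{2} \norm{f(u_k)}_U t_k^2 \norm{F(u_k)}_V$. Adding the two contributions gives the first claimed inequality.

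For the second statement, I would simply insert the step-size condition $\omega t_k \norm{f(u_k)}_U \le 2\theta(1-\kappa)$ into the quadratic term of the first bound, writing $\tfrac{\omega}{2}\norm{f(u_k)}_U t_k^2 = \tfrac{t_k}{2}\bigl(\omega t_k \norm{f(u_k)}_U\bigr) \le \theta(1-\kappa) t_k$, whence $1 - (1-\kappa) t_k + \tfrac{\omega}{2}\norm{f(u_k)}_U t_k^2 \le 1 - (1-\theta)(1-\kappa) t_k$, which is the second inequality.

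The computation itself is essentially routine; the only genuinely delicate points are the domain/regularity bookkeeping in the first step — ensuring the connecting segment lies in $D$ so that the integral representation and A\ref{ass:omega} are legitimate, and that continuity of $F'$ on $D \times U$ makes the integrands continuous and hence integrable — and remembering to route the linearization estimate through Lemma \ref{lem:kappaScaled} rather than through A\ref{ass:kappa} directly (the former is equivalent but furnishes the sharp affine-in-$t_k$ bound that is needed here).
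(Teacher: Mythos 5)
Your proof is correct and follows essentially the same route as the paper's: the fundamental theorem of calculus to represent $F(u_{k+1}) - F(u_k)$ as an integral of $F'$ along the segment, insertion and removal of the linearization $t_k F'(u_k) f(u_k)$, the triangle inequality, Lemma~\ref{lem:kappaScaled} for the affine term, and A\ref{ass:omega} integrated over $[0,t_k]$ for the remainder, with the second claim obtained by plugging the step-size condition into the quadratic term. The only difference is cosmetic: you spell out the domain and integrability bookkeeping a bit more explicitly than the paper does.
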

\begin{proof}
  Because $F'$ is continuously G\^ateaux differentiable, we can apply
  \cite[3.2.2.~Thm. and 2.1.4.~Thm.]{Hamilton1982} to show that for $u \in D$
  and $\delta u \in U$
  \[
    F(u + t_k \delta u) - F(u) =
    \int_{0}^{1} F'(u + \tau t_k \delta u) t_k \delta u \ud \tau
    \overset{\tau = \frac{t}{t_k}}{=} \int_{0}^{t_k} F'(u + t \delta u) \delta
    u \ud t.
  \]
  Using Lemma \ref{lem:kappaScaled} and A\ref{ass:omega} we obtain the first
  assertion from
  \begin{align*}
    & \quad\, \norm{F(u_{k+1})}
    = \norm{F(u_k) - \int_{0}^{t_k} F'(u_k - \tau f(u_k)) f(u_k) \ud \tau}_V\\
    &= \norm{F(u_k) - t_k F'(u_k) f(u_k) + \int_{0}^{t_k} \left[ F'(u_k) -
    F'(u_k - \tau f(u_k)) \right] f(u_k) \ud \tau}_V\\
    &\le \norm{F(u_k) - t_k F'(u_k) f(u_k)}_V
    + \int_{0}^{t_k} \norm{\left[ F'(u_k) - F'(u_k - \tau f(u_k)) \right]
    f(u_k)}_V \ud \tau\\
    &\le \left[ 1 - (1 - \kappa) t_k + \frac{\omega}{2} \norm{f(u_k)}_U t_k^2
    \right] \norm{F(u_k)}_V.
  \end{align*}
  The second assertion follows immediately.
  \qed
\end{proof}

We can now state a local convergence theorem.
\begin{theorem}
  \label{thm:localConvergence}
  Let A\ref{ass:validIni}, A\ref{ass:kappa}, and A\ref{ass:omega} hold. If there
  exists a $\bar{t} \in (0,1)$ such that 
  $t_k \ge \bar{t}$ for all $k \in \mathbb{N}$
  and if there exists a $\theta \in (0, 1)$ such that for some $k \in
  \mathbb{N}$ the iterate $u_k$ satisfies
  \[
  \mathcal{T}(u_k) \subseteq \mathcal{R}_r
  \quad \text{and} \quad
  \omega r \norm{F(u_k)}_V \le 2 \theta (1 - \kappa),
  \]
  then $(u_k)_{k \in \mathbb{N}}$ converges to some point $u^\ast \in
  \mathcal{T}(u_k)$ with $F(u^\ast) = 0$.
\end{theorem}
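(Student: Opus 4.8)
The plan is to propagate, by induction on $j\ge 0$, three facts about $u_{k+j}$: it is $r$-regular and lies in $\mathcal{T}(u_k)$; the residual norm has contracted geometrically at a fixed rate; and, in consequence, the tail increments are summable, so that $(u_k)$ is Cauchy in the Banach space $U$ and its limit is a zero of $F$ by continuity.

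Set $q:=1-(1-\theta)(1-\kappa)\bar t$; then $q\in(0,1)$, since $\kappa\in[0,1)$, $\omega\ge0$ (both bound norms), $\theta\in(0,1)$, and $\bar t\in(0,1)$. The induction hypothesis at level $j$ is $u_{k+j}\in\mathcal{R}_r\cap\mathcal{T}(u_k)$ together with $\norm{F(u_{k+j})}_V\le q^j\norm{F(u_k)}_V$; it holds at $j=0$ by the hypotheses on $u_k$ and $\mathcal{T}(u_k)\subseteq\mathcal{R}_r$. For the step, $u_{k+j}\in\mathcal{R}_r$ gives $\norm{f(u_{k+j})}_U<r\norm{F(u_{k+j})}_V\le r\norm{F(u_k)}_V$, whence with $t_{k+j}\le1$ and $\omega r\norm{F(u_k)}_V\le2\theta(1-\kappa)$ the step-size hypothesis $\omega t_{k+j}\norm{f(u_{k+j})}_U\le2\theta(1-\kappa)$ of Lemma~\ref{lem:Fdecrease} is satisfied; since also $u_{k+j}\in\mathcal{T}(u_k)\subseteq\mathcal{T}(u_0)$ (the running context in which this theorem is used has $u_k\in\mathcal{T}(u_0)$), Lemma~\ref{lem:Fdecrease} applies and, with $t_{k+j}\ge\bar t$, delivers
\[
  \norm{F(u_{k+j+1})}_V \le \bigl[1-(1-\theta)(1-\kappa)t_{k+j}\bigr]\norm{F(u_{k+j})}_V \le q^{j+1}\norm{F(u_k)}_V.
\]

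To finish the induction I still need $u_{k+j+1}\in\mathcal{R}_r\cap\mathcal{T}(u_k)$. For this I would re-run the estimate from the proof of Lemma~\ref{lem:Fdecrease} along the Euler segment $s\mapsto u_{k+j}-s\,t_{k+j}f(u_{k+j})$, $s\in[0,1]$: because $\omega\norm{f(u_{k+j})}_U<\omega r\norm{F(u_k)}_V\le2\theta(1-\kappa)$, that estimate collapses to $\norm{F(u_{k+j}-s\,t_{k+j}f(u_{k+j}))}_V\le\norm{F(u_{k+j})}_V\le\norm{F(u_k)}_V$ for every $s$, so the segment lies in $\widetilde{\mathcal{T}}(u_k)$ and joins $u_{k+j}$ to $u_{k+j+1}$; concatenating it with the path from $u_k$ to $u_{k+j}$ supplied by the induction hypothesis shows $u_{k+j+1}\in\mathcal{T}(u_k)\subseteq\mathcal{R}_r$. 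With the induction complete, summing the increments,
\[
  \sum_{j\ge0}\norm{u_{k+j+1}-u_{k+j}}_U = \sum_{j\ge0}t_{k+j}\norm{f(u_{k+j})}_U < r\norm{F(u_k)}_V\sum_{j\ge0}q^j = \frac{r}{1-q}\norm{F(u_k)}_V<\infty,
\]
so $(u_k)$ converges to some $u^\ast$ that lies in the closure of $\mathcal{T}(u_k)$ by the same partial-sum bound; continuity of $F$ and $\norm{F(u_{k+j})}_V\le q^j\norm{F(u_k)}_V\to0$ then give $F(u^\ast)=0$, once $u^\ast\in D$ is known.

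I expect the invariance bookkeeping to be the main obstacle: one must ensure that the iterates, the Euler segments joining consecutive iterates, and the limit point all stay in $D$ (and hence in $\mathcal{R}_r$ and $\mathcal{T}(u_k)$ as claimed), which calls for a connectedness argument of the type used in \cite{Potschka2016}, the level sets $\widetilde{\mathcal{T}}(u)$ being only relatively closed in $D$. Once that is in place, the geometric residual decay, the summability of the increments and the passage to the limit are routine.
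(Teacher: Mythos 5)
Your argument follows the paper's proof in its essentials: you derive the geometric contraction $\norm{F(u_{k+j})}_V \le q^j \norm{F(u_k)}_V$ from Lemma~\ref{lem:Fdecrease}, obtain the Cauchy property via the $\mathcal{R}_r$ bound $\norm{f}_U < r\norm{F}_V$ and the geometric series, and pass to the limit. Where you go beyond the paper is in making the induction explicit: the paper simply invokes ``repeated application of Lemma~\ref{lem:Fdecrease}'' and asserts $u^\ast \in \mathcal{T}(u_k)$ without verifying that each iterate $u_{k+j}$ stays in $\mathcal{R}_r\cap\mathcal{T}(u_k)$ (which is needed both to apply the lemma again and to use $\norm{f(u_{k+j})}_U < r\norm{F(u_{k+j})}_V$). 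Your Euler-segment argument is the right fix: the bracket $1 - (1-\kappa)st_{k+j} + \tfrac{\omega}{2}\norm{f(u_{k+j})}_U s^2 t_{k+j}^2$ stays $\le 1$ for all $s\in[0,1]$ under the hypothesis $\omega t_{k+j}\norm{f(u_{k+j})}_U \le 2\theta(1-\kappa)$ (indeed $\le 2(1-\kappa)$ suffices), so the segment lies in $\widetilde{\mathcal{T}}(u_k)$ and concatenation keeps $u_{k+j+1}$ path-connected to $u_k$. You also correctly observe that $u^\ast \in D$ (so that $F(u^\ast)$ is meaningful and $u^\ast \in \mathcal{T}(u_k)$ rather than merely in its $U$-closure) is not actually secured by the arguments given; the paper asserts it without proof, and the standard repair is an implicit standing assumption that $\clos\mathcal{T}(u_0)\subset D$ (or, in the global result, the $\varepsilon$-tube hypothesis of Theorem~\ref{thm:convergenceBSC}). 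Overall: correct, same route, and more careful about the invariance bookkeeping than the paper itself.
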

\begin{proof}
  Because $t_k \in [0, 1]$, we have
  \[
  \omega t_k \norm{f(u_k)}_U \le \omega \norm{f(u_k)}_U
  \overset{\text{A\ref{ass:validIni}}}{\le} \omega r \norm{F(u_k)}_V
  \le 2 \theta (1 - \kappa).
  \]
  Hence, repeated application of Lemma \ref{lem:Fdecrease} yields for all $j \in
  \mathbb{N}$
  \begin{equation}
    \label{eqn:geomFConv}
    \norm{F(u_{k+j})}_V 
    \le q^j \norm{F(u_k)}_V \quad \text{with }
    q := 1 - (1-\theta)(1-\kappa) \bar{t}.
  \end{equation}
  Because $q < 1$, $\norm{F(u_k)}_V$
  converges geometrically. In addition, we obtain that $(u_k)_{k \in
  \mathbb{N}}$ is a Cauchy sequence by virtue of
  \begin{align*}
    \norm{u_k - u_{k+j}}_U &\le \sum_{i=0}^{j-1} \norm{u_{k+i} - u_{k+i-1}}_U
    = \sum_{i=0}^{j-1} t_{k+i} \norm{f(u_{k+i})}_U\\
    &\le r \sum_{i=0}^{j-1} \norm{F(u_{k+i})}_V 
    \le r \norm{F(u_{k})}_V \sum_{i=0}^{\infty} q^i 
    = \frac{r}{1-q} \norm{F(u_k)} \overset{k \to \infty}{\longrightarrow} 0.
  \end{align*}
  Thus, $(u_{k})_{k \in \mathbb{N}}$ converges to some $u^\ast \in \mathcal{T}(u_k)
  \subseteq \mathcal{R}_r$ and \eqref{eqn:geomFConv} implies $F(u^\ast) = 0$.
  \qed
\end{proof}

For the rate of convergence, we obtain the following result:
\begin{lemma}
  \label{lem:linearconvergence}
  Under the assumptions of Theorem \ref{thm:localConvergence}, $\norm{F(u_k)}_V$
  converges linearly with asymptotic linear convergence rate $\kappa < 1$.
\end{lemma}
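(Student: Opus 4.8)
The plan is to pass to the limit $k \to \infty$ in the per-step contraction estimate of Lemma~\ref{lem:Fdecrease}. Theorem~\ref{thm:localConvergence} already furnishes a tail of the iteration on which $u_k \to u^\ast$, $F(u^\ast) = 0$, all iterates lie in $\mathcal{R}_r \cap \mathcal{T}(u_0)$, and $\norm{F(u_k)}_V \to 0$ geometrically, cf.~\eqref{eqn:geomFConv}. Two observations then suffice: (i) $\norm{f(u_k)}_U \to 0$, which is immediate from $u_k \in \mathcal{R}_r$ since $\norm{f(u_k)}_U < r \norm{F(u_k)}_V \to 0$; and (ii) $t_k \to 1$, i.e.\ backward step control asymptotically switches to full steps. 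Granting (i) and (ii), Lemma~\ref{lem:Fdecrease} yields
\[
  \frac{\norm{F(u_{k+1})}_V}{\norm{F(u_k)}_V} \le 1 - (1-\kappa) t_k + \frac{\omega}{2} \norm{f(u_k)}_U t_k^2 \overset{k\to\infty}{\longrightarrow} \kappa,
\]
so that $\limsup_{k\to\infty} \norm{F(u_{k+1})}_V / \norm{F(u_k)}_V \le \kappa < 1$, which is precisely the claimed asymptotic linear rate; together with $\norm{F(u_k)}_V \to 0$ this is linear convergence.

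It remains to establish (ii), which I expect to be the only genuine work. If A\ref{ass:fLipschitz} may be used, this is immediate from Lemma~\ref{lem:localFullSteps} as soon as $u_k$ has entered $B(u^\ast, L^{-2} H)$. Without it, I would argue directly: for all sufficiently large $k$ and any $\tau \in [0,1]$, the estimate from the proof of Lemma~\ref{lem:Fdecrease} (with $t_k$ replaced by $\tau$), combined with $\omega r \norm{F(u_k)}_V \le 2\theta(1-\kappa) < 2(1-\kappa)$, gives $\norm{F(u_k - \tau f(u_k))}_V \le \norm{F(u_k)}_V$. Hence the segment $\{ u_k - \tau f(u_k) \mid \tau \in [0,t] \}$ stays in $\widetilde{\mathcal{T}}(u_k)$ and, being path-connected to $u_k$, in $\mathcal{T}(u_k) \subseteq \mathcal{R}_r$. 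Applying the $\mathcal{R}_r$-bound at $u_k$ and at $u_k - t f(u_k)$ then gives, for every $t \in [0,1]$,
\[
  \norm{g(u_k,t)}_U \le \norm{f(u_k - t f(u_k))}_U + \norm{f(u_k)}_U < 2 r \norm{F(u_k)}_V.
\]
Since $\norm{F(u_k)}_V \to 0$, for $k$ large we have $t \norm{g(u_k,t)}_U < H$ throughout $[0,1]$, so $\mathcal{B}_H(u_k) = \{1\}$ and $t_k = 1$ by \eqref{eqn:BSC}.

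The main obstacle is thus step (ii): the argument that $r$-regularity forces the step-size rule to revert to full Newton steps near $u^\ast$. Everything after that is a routine limit in the contraction bound of Lemma~\ref{lem:Fdecrease}, exploiting only that the $r$-regularity bound makes $\norm{f(u_k)}_U$ vanish at the same rate as the residual $\norm{F(u_k)}_V$.
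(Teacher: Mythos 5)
Your proof takes essentially the same route as the paper: extract $\norm{f(u_k)}_U \to 0$ from $r$-regularity, argue $t_k \to 1$, and pass to the limit in the per-step contraction estimate of Lemma~\ref{lem:Fdecrease}, from which $\limsup_k \norm{F(u_{k+1})}_V / \norm{F(u_k)}_V \le \kappa$ follows. Where you differ from the paper is in how $t_k \to 1$ is justified. The paper's own proof simply cites Lemma~\ref{lem:localFullSteps}, which requires A\ref{ass:fLipschitz}; strictly read, A\ref{ass:fLipschitz} is \emph{not} among the stated hypotheses of Theorem~\ref{thm:localConvergence}, so the paper's argument quietly imports an extra assumption. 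Your second, ``direct'' argument for (ii) avoids A\ref{ass:fLipschitz} entirely: you reuse the integral-form Taylor estimate underlying Lemma~\ref{lem:Fdecrease} (which only needs A\ref{ass:kappa} and A\ref{ass:omega}) to show that the residual does not increase along the whole segment $\{u_k - \tau f(u_k) : \tau \in [0,1]\}$, hence the segment stays in $\mathcal{T}(u_k) \subseteq \mathcal{R}_r$, hence $\norm{g(u_k,t)}_U < 2r\norm{F(u_k)}_V \to 0$ uniformly in $t \in [0,1]$, forcing $\mathcal{B}_H(u_k) = \{1\}$ for large $k$. That is a clean repair of the small gap in the paper and is worth having. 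Two minor caveats, common to both arguments: the lemma as stated does not say the step sizes come from \eqref{eqn:BSC}, yet both you and the paper use \eqref{eqn:BSC} to conclude $t_k \to 1$ (for an arbitrary sequence with only $t_k \ge \bar{t}$, the asymptotic rate would be $1 - (1-\kappa)\bar{t}$, not $\kappa$); and the set inclusion $\mathcal{T}(u_k) \subseteq \mathcal{R}_r$ you invoke is stated in Theorem~\ref{thm:localConvergence} only for the initial index, so one should add the one-line observation that $\mathcal{T}(u_{k+j}) \subseteq \mathcal{T}(u_k)$ for $j \ge 0$ (path-concatenation plus residual monotonicity along the polygonal iteration path).
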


\begin{proof}
  Because $u_k \in \mathcal{R}_r$, it follows that $\norm{f(u_k)}_U \le r
  \norm{F(u_k)}_V \to 0$. Hence, there is a sequence $(\theta_K)_{K \in
  \mathbb{N}}$ with $\theta_K \in [0, 1)$ and $\theta_K \to 0$ such that
  \[
  \omega t_k \norm{f(u_k)}_U \le 2 \theta_K (1 - \kappa) \quad \text{for all } k
  \ge K.
  \]
  Lemma \ref{lem:localFullSteps} and repeated application of Lemma
  \ref{lem:Fdecrease} then deliver 
  \[
  \norm{F(u_{K+1})}_V \le \left[ 1 - (1-\theta_K)(1-\kappa) \right]
  \norm{F(u_K)}_V,
  \]
  where $1 - (1-\theta_K)(1-\kappa) \to \kappa$ as $K \to \infty$.
  \qed
\end{proof}

In order to obtain methods with guaranteed superlinear or quadratic local
convergence, it is necessary to appropriately drive $\kappa$ to zero as $F(u_k)
\to 0$ as exhaustively described by the means of forcing sequences $\eta_k =
\kappa$ in \cite{Dembo1982}.

\subsection{Global convergence}

The following lemmas are required for the main theorem, which assures
convergence of the iterates to $u_0^\ast$.
As a prerequisite, we prove that every neighborhood of an isolated zero $u^\ast$
of $F$ contains a path connected level set that contains a neighborhood of
$u^\ast$.
\begin{lemma}
  \label{lem:boundedLevelSets}
  Let A\ref{ass:validIni}, A\ref{ass:kappa}, and A\ref{ass:fLipschitz} hold. If
  there exist $\varepsilon > 0$ and $u^\ast \in D$ such that $u^\ast$ is the
  only zero of $F$ on $B(u^\ast, \varepsilon) \subseteq \mathcal{R}_r \cap
  \mathcal{T}(u_0),$ then there exists an $\tilde{\varepsilon} > 0$ with
  \[
  \bigcup_{u \in B(u^\ast, \tilde{\varepsilon})} \mathcal{T}(u) 
  \subseteq B(u^\ast, \varepsilon).
  \]
\end{lemma}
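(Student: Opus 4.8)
The plan is to argue by contradiction after fixing the geometry. Set $\rho := \varepsilon/2$. Using continuity of $F$ at $u^\ast$ and $F(u^\ast) = 0_V$, I would first choose $\tilde{\varepsilon} \in (0, \rho)$ small enough that $\frac{r}{1-\kappa} \norm{F(u)}_V < \rho$ for every $u \in B(u^\ast, \tilde{\varepsilon})$, and then claim this $\tilde{\varepsilon}$ does the job. If it did not, there would be $u \in B(u^\ast, \tilde{\varepsilon})$ and $\bar u \in \mathcal{T}(u)$ with $\norm{\bar u - u^\ast}_U \ge \varepsilon$. By definition of $\mathcal{T}(u)$ there is a path $c \in C^0([0,1], \widetilde{\mathcal{T}}(u))$ joining $c(0) = u$ to $c(1) = \bar u$. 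Since $\norm{c(0) - u^\ast}_U < \tilde{\varepsilon} < \rho$ while $\norm{c(1) - u^\ast}_U \ge \varepsilon > \rho$, the intermediate value theorem applied to $s \mapsto \norm{c(s) - u^\ast}_U$ produces a crossing point $w := c(s^\ast)$ with $\norm{w - u^\ast}_U = \rho$. In particular $w \in B(u^\ast, \varepsilon) \subseteq \mathcal{R}_r \cap \mathcal{T}(u_0)$; moreover $w \ne u^\ast$ forces $\norm{F(w)}_V > 0$ since $u^\ast$ is the only zero of $F$ in $B(u^\ast, \varepsilon)$, and $\norm{F(w)}_V \le \norm{F(u)}_V$ because $w \in \widetilde{\mathcal{T}}(u)$.

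The key step is then to apply Theorem~\ref{thm:arclength} to the point $w$. It yields a unique $r$-regular part $u_r^w$ of the generalized Newton path emanating from $w$, with finite arclength obeying
\[
  \norm{w - w^\ast}_U \le \ell(u_r^w) < \frac{r}{1-\kappa} \norm{F(w)}_V \le \frac{r}{1-\kappa} \norm{F(u)}_V < \rho = \varepsilon - \rho,
\]
where $w^\ast = \lim_{t \to \infty} u_r^w(t)$. Since $\norm{u_r^w(t) - w}_U \le \ell(u_r^w)$ for all $t \ge 0$, the triangle inequality gives $\norm{u_r^w(t) - u^\ast}_U \le \ell(u_r^w) + \rho < \varepsilon$, so the whole $r$-regular part stays inside $B(u^\ast, \varepsilon) \subseteq \mathcal{R}_r$ and hence never leaves $\mathcal{R}_r$. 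Consequently $w^\ast \in B(u^\ast, \varepsilon) \subseteq \mathcal{R}_r$, and the last assertion of Theorem~\ref{thm:arclength} gives $F(w^\ast) = 0_V$; as $u^\ast$ is the only zero in $B(u^\ast, \varepsilon)$ we conclude $w^\ast = u^\ast$. But then $\rho = \norm{w - u^\ast}_U = \norm{w - w^\ast}_U \le \ell(u_r^w) < \rho$, a contradiction. Hence every $\bar u \in \mathcal{T}(u)$ with $u \in B(u^\ast, \tilde{\varepsilon})$ satisfies $\bar u \in B(u^\ast, \varepsilon)$, which is the claim.

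I expect the genuine difficulty --- and the reason the finite-dimensional argument does not carry over literally --- to lie in obtaining a positive lower bound for $\norm{F}_V$ on the sphere $\{u \in U : \norm{u - u^\ast}_U = \rho\}$: in finite dimensions compactness of this sphere gives $\inf \norm{F}_V > 0$ on it, but here the sphere is not compact. Theorem~\ref{thm:arclength} is precisely the substitute, since for the relevant crossing point it delivers the pointwise estimate $\norm{F(w)}_V > \frac{1-\kappa}{r} \norm{w - u^\ast}_U = \frac{(1-\kappa)\rho}{r}$, which clashes with $\norm{F(w)}_V \le \norm{F(u)}_V$ once $u$ is close enough to $u^\ast$. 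A secondary point requiring (mild) care is that the generalized Newton path started at $w$ must not itself exit $B(u^\ast, \varepsilon)$, for otherwise the ``only zero'' hypothesis could not be invoked for $w^\ast$; this is automatic as soon as $\norm{F(w)}_V$, and thus the total arclength, is small, and that smallness is supplied by continuity of $F$ at $u^\ast$. No assumptions beyond A\ref{ass:validIni}, A\ref{ass:kappa}, and A\ref{ass:fLipschitz} (entering through Theorem~\ref{thm:arclength}) are needed.
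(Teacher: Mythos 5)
Your proof is correct and follows essentially the same route as the paper: contradiction, intermediate value theorem to extract a crossing point on the sphere of radius $\varepsilon/2$ around $u^\ast$, and Theorem~\ref{thm:arclength} to produce a zero of $F$ inside $B(u^\ast,\varepsilon)$ that is distinct from $u^\ast$. The only notable (but largely cosmetic) difference is that you make the arclength bound small by invoking continuity of $F$ at $u^\ast$ (a consequence of the standing continuous G\^ateaux differentiability, though not one of the numbered assumptions), whereas the paper takes a sequence $u_n \to u^\ast$ and bounds $\norm{F(u_n)}_V < r\norm{f(u_n)-f(u^\ast)}_U \le rL\norm{u_n-u^\ast}_U$ using $u_n\in\mathcal{R}_r$ together with A\ref{ass:fLipschitz}, thereby staying strictly within the explicit assumptions; both are valid.
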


\textit{Proof by contradiction~}
  We assume to the contrary that there exists a sequence $(u_n)_{n \in
  \mathbb{N}}$ with $\norm{u_n - u^*}_U < \frac{\varepsilon}{2n}$ and 
  $\mathcal{T}(u_n) \not\subseteq B(u^*, \varepsilon)$. 
  Hence, there exists a sequence $(\tilde{v}_n)_{n \in \mathbb{N}}$ with
  $\tilde{v}_n \in \mathcal{T}(u_n)$ and $\norm{\tilde{v}_n - u^\ast}_U \ge
  \varepsilon$.  Because $\mathcal{T}(u_n)$ is path connected, there exist
  continuous functions $c_n: [0, 1] \to \mathcal{T}(u_n)$ with $c_n(0) = u_n$
  and $c_n(1) = \tilde{v}_n$. Because $\norm{c_n(0) - u^*}_U <
  \frac{\varepsilon}{2}$ and $\norm{c_n(1) - u^*}_U \ge \varepsilon$, the
  intermediate value theorem yields the existence of $v_n = c_n(\tau_n) \in
  \mathcal{T}(u_n)$ for some $\tau_n \in [0, 1]$ satisfying 
  \begin{equation}
    \label{eqn:annulus}
    \norm{v_n - u^*}_U = \frac{\varepsilon}{2}.
  \end{equation}
  By Theorem \ref{thm:arclength}, we obtain for the distance to the limit
  $v_n^\ast$ of the $r$-regular part of the generalized Newton path emanating
  from $v_n$ that
  \begin{align*}
    \norm{v_n - v_n^\ast}_U 
    &\le \ell(v_r^n) < \frac{r}{1-\kappa} \norm{F(v_n)}_V
    \le \frac{r}{1-\kappa} \norm{F(u_n)}_V
    < \frac{r^2}{1-\kappa} \norm{f(u_n)}_U\\
    &= \frac{r^2}{1-\kappa} \norm{f(u_n) - f(u^*)}_U
    \le \frac{r^2L}{1-\kappa} \norm{u_n - u^*}_U <
    \frac{r^2L\varepsilon}{2(1-\kappa)n} \to 0,
  \end{align*}
  which implies for some sufficiently large $n$ that $v^*_n \in B(u^*,
  \varepsilon) \subseteq \mathcal{R}_r$ and thus $F(v^*_n) = 0$.  By
  \eqref{eqn:annulus} we get $v_n^* \neq u^*$ in contradiction to the uniqueness
  of $u^*$.
  \qedhere

We also need a bound on the deviation of two neighboring generalized Newton
paths emanating from $u_k$ and $u_{k+1}$.
\begin{lemma}
  \label{lem:oneStepError}
  Let A\ref{ass:fLipschitz} hold. If $u^k(t_k + \tau)$, $u^{k+1}(\tau) \in
  \mathcal{T}(u_k)$ for all $\tau \in [0, t]$, then
  \[
  \norm{u^k(t_k + t) - u^{k+1}(t)}_U 
  \le \frac{1}{2} \norm{f(u_k)}_U L e^{L(t_k+t)} t_k^2.
  \]
\end{lemma}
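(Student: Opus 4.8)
The plan is to compare the two generalized Newton paths $u^k(t_k+\cdot)$ and $u^{k+1}(\cdot)$ by a standard Gronwall argument on the integrated form of the ODE \eqref{eqn:genDavidenko}, where the only nontrivial input is the initial separation of the two paths at $\tau=0$. First I would observe that $u^k(t_k+\tau)$ solves \eqref{eqn:genDavidenko} with initial value $u^k(t_k)$, while $u^{k+1}(\tau)$ solves it with initial value $u^{k+1}(0)=u_{k+1}=u_k+t_k\delta u_k = u_k - t_k f(u_k)$. Hence I need to bound $\norm{u^k(t_k) - u_{k+1}}_U$, the difference after a single explicit Euler step of length $t_k$ versus the exact flow over the same length. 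Using the integral representation $u^k(t_k) = u_k - \int_0^{t_k} f(u^k(s))\,\ud s$ and the Euler point $u_{k+1}=u_k - t_k f(u_k)$, this difference is $\int_0^{t_k}\bigl[f(u_k)-f(u^k(s))\bigr]\,\ud s$ in $U$-norm; by A\ref{ass:fLipschitz} and then $\norm{u_k - u^k(s)}_U = \norm{\int_0^s f(u^k(\sigma))\,\ud\sigma}_U \le s\,\norm{f(u_k)}_U$ along the nonincreasing-residual path (again using A\ref{ass:fLipschitz} to bound $\norm{f(u^k(\sigma))}_U\le \norm{f(u_k)}_U$ via the descent established implicitly, or more simply bounding crudely), this yields $\norm{u^k(t_k)-u_{k+1}}_U \le \tfrac{1}{2}L\,\norm{f(u_k)}_U\,t_k^2$.

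Next I would set $e(\tau) := \norm{u^k(t_k+\tau) - u^{k+1}(\tau)}_U$ and write, from \eqref{eqn:genDavidenko} in integral form,
\[
  u^k(t_k+\tau) - u^{k+1}(\tau)
  = \bigl(u^k(t_k) - u_{k+1}\bigr)
  - \int_0^{\tau}\bigl[f(u^k(t_k+\sigma)) - f(u^{k+1}(\sigma))\bigr]\,\ud\sigma,
\]
which is valid because the hypothesis places both arguments in $\mathcal{T}(u_k)$ for all relevant times, so A\ref{ass:fLipschitz} applies. Taking norms and using the Lipschitz bound gives the integral inequality $e(\tau) \le e(0) + L\int_0^{\tau} e(\sigma)\,\ud\sigma$ with $e(0) \le \tfrac{1}{2}L\,\norm{f(u_k)}_U\,t_k^2$. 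Gronwall's inequality (as cited from \cite{Amann1990} earlier) then yields $e(t) \le e(0)\,e^{Lt} \le \tfrac{1}{2}L\,\norm{f(u_k)}_U\,t_k^2\,e^{Lt}$, and absorbing the crude factor $e^{Lt_k}\ge 1$ (or, if needed, carrying $t_k$ into the exponent when the single-step bound is derived with a running Gronwall estimate rather than the crude $s\le t_k$ bound) produces the stated $\tfrac{1}{2}\norm{f(u_k)}_U L\,e^{L(t_k+t)}t_k^2$.

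The main obstacle is bookkeeping the single-step discretization error $e(0)$ sharply enough to get exactly the claimed constant: the naive estimate $\norm{u_k - u^k(s)}_U \le s\sup\norm{f}_U$ needs $\sup_{\sigma\in[0,s]}\norm{f(u^k(\sigma))}_U \le \norm{f(u_k)}_U$, which is not literally one of the assumptions but follows because along the path $\norm{F(u^k(\cdot))}_V$ is nonincreasing (Lemma \ref{lem:Fdescent}) and $u^k(\cdot)$ stays in $\mathcal{R}_r$, so $\norm{f(u^k(\sigma))}_U < r\norm{F(u^k(\sigma))}_V \le r\norm{F(u_k)}_V$; alternatively one uses A\ref{ass:fLipschitz} in a self-referential Gronwall to bound $\norm{f(u^k(\sigma)) - f(u_k)}_U$ directly, which is the cleaner route and also explains why the factor $e^{Lt_k}$ rather than a bare constant appears in the final bound. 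Everything else is routine: the comparison of two trajectories of the same Lipschitz vector field, closed by Gronwall.
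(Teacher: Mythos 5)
Your plan is essentially the paper's: the paper's own proof is a one-line deferral to the integral Gronwall argument in \cite[Lem.~8.5]{Potschka2016}, and what you describe — bound the one-step Euler discrepancy $\norm{u^k(t_k)-u_{k+1}}_U$, then run Gronwall on the two trajectories of the Lipschitz vector field $-f$ — is that argument. One caution: your first pass at the single-step error, $\norm{u_k - u^k(s)}_U \le s\norm{f(u_k)}_U$ via $\norm{f(u^k(\sigma))}_U \le \norm{f(u_k)}_U$, is not available here, because the lemma assumes only A\ref{ass:fLipschitz} and neither Lemma~\ref{lem:Fdescent} (which needs A\ref{ass:validIni}, A\ref{ass:kappa}) nor the detour through $\mathcal{R}_r$ gives that inequality without extra factors of $r$. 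You already flag this, and your fallback — a self-referential Gronwall for $w(s)=\norm{u^k(s)-u_k}_U$, giving $w(s)\le\frac{\norm{f(u_k)}_U}{L}(e^{Ls}-1)$ and hence $\norm{u^k(t_k)-u_{k+1}}_U \le \norm{f(u_k)}_U\bigl(\tfrac{e^{Lt_k}-1}{L}-t_k\bigr) \le \tfrac12\norm{f(u_k)}_U L\,t_k^2 e^{Lt_k}$ via $e^x-1-x\le\tfrac{x^2}{2}e^x$ — is the right route and is precisely what produces the $e^{Lt_k}$ in the stated constant; a second Gronwall over $[0,t]$ then supplies the $e^{Lt}$ and completes the proof.
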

\begin{proof}
  We use the integral form of the Gronwall inequality as in
  \cite[Lem.~8.5]{Potschka2016}.
  \qed
\end{proof}

In order to prove an a priori bound on the decrease of the nonlinear residual
for \eqref{eqn:BSC}, we need the following Lemma.

\begin{lemma}
  \label{lem:sqrtBound}
  Let $h > 0$. If a sequence $(a_k)_{k \in \mathbb{N}}$ of nonnegative 
  numbers satisfies $a_{k+1}^2 \le a_k^2 - 2 h a_k$ for all $k \in \mathbb{N},$
  then $a_k \le \max \{a_0 - k h, 0\}$ for all $k \in \mathbb{N}.$
\end{lemma}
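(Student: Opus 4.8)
\textbf{Proof plan for Lemma \ref{lem:sqrtBound}.}
The hypothesis $a_{k+1}^2 \le a_k^2 - 2 h a_k$ suggests that the sequence decreases by at least $h$ at each step, at least as long as it stays positive. The plan is to make this precise by a two-step argument: first establish a per-step decrease $a_{k+1} \le a_k - h$ whenever $a_k \ge h$, and then deal with the case $0 \le a_k < h$, where the recursion forces $a_{k+1} = 0$; afterwards, conclude by an easy induction.

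For the first step I would argue as follows. Fix $k$ and suppose $a_k \ge h$. The hypothesis gives $a_{k+1}^2 \le a_k^2 - 2 h a_k = (a_k - h)^2 - h^2 \le (a_k - h)^2$, and since $a_k - h \ge 0$ and $a_{k+1} \ge 0$, taking square roots yields $a_{k+1} \le a_k - h$. If instead $0 \le a_k < h$, then $a_{k+1}^2 \le a_k^2 - 2 h a_k = a_k(a_k - 2h) \le 0$ because the first factor is nonnegative and the second is negative (or $a_k = 0$ outright), so $a_{k+1} = 0$; in particular $a_{k+1} \le \max\{a_k - h, 0\}$ in this case too. Combining both cases, we always have $a_{k+1} \le \max\{a_k - h, 0\}$.

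The conclusion now follows by induction on $k$. The base case $k = 0$ is trivial since $a_0 \le \max\{a_0, 0\} = a_0$. For the inductive step, assume $a_k \le \max\{a_0 - k h, 0\}$. Using the per-step bound and monotonicity of $t \mapsto \max\{t - h, 0\}$ in $t$ (so that the inductive hypothesis may be inserted), we get
\[
a_{k+1} \le \max\{a_k - h, 0\} \le \max\bigl\{\max\{a_0 - k h, 0\} - h, 0\bigr\} = \max\{a_0 - (k+1) h, 0\},
\]
where the last equality holds because $\max\{\max\{s,0\} - h, 0\} = \max\{s - h, 0\}$ for any real $s$ (check separately the cases $s \ge 0$ and $s < 0$). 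This completes the induction and hence the proof.

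I do not expect any genuine obstacle here; the only point requiring a moment of care is the case split $a_k \ge h$ versus $a_k < h$ — one must not blindly take square roots of $a_{k+1}^2 \le (a_k - h)^2$ without knowing the sign of $a_k - h$ — but once that is handled the result is immediate. \qed
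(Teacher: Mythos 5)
Your proof is correct and follows essentially the same two-case argument as the paper, with the only cosmetic difference being the threshold for the case split: you split at $a_k \ge h$ (where $a_k - h \ge 0$ makes the square root clean), while the paper splits at $a_k \le 2h$ (where $a_k(a_k-2h) \le 0$ forces $a_{k+1}=0$). You also make the closing induction explicit, which the paper leaves to the reader.
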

\begin{proof}
  If $a_k \le 2 h$, it follows that $a_{k+1} = 0$ by virtue of
  $0 \le a_{k+1}^2 \le a_k (a_k - 2h) \le 0.$
  If $a_k > 2 h$, we immediately obtain
  $a_{k+1}^2 \le a_k^2 - 2 h a_k + h^2 = (a_k - h)^2.$
  Taking the square root completes the proof.  \qed
\end{proof}

We can now prove the main theorem of this article.

\begin{figure}[tb]
  \sidecaption
    \begin{tikzpicture}
      \draw[color=black!50] (0,0) circle (1);
      \draw[color=black!50] (-0.9,0) circle (0.7);
      \draw (0,-2) arc (-90:90:2);
      \draw[color=black!50] (0,2) arc (90:270:2);
      \draw (0,2) -- (-1,2);
      \draw (-1,2) arc (90:150:5);
      \draw (0,-2) -- (-1,-2);
      \draw (-1,-2) arc (90:150:1);
      \draw[thick] (0,0) -- (-1,0);
      \draw[thick] (-1,0) arc (90:150:3);
      \draw[thick] (-1,-0.1) -- (-1,0.1);
      \draw[thick] (-0.8,-0.1) -- (-0.8,0.1);
      \draw (-0.8,-0.7) arc (-90:90:0.7);
      \draw (-0.8,-0.7) -- (-1,-0.7);
      \draw (-0.8,0.7) -- (-1,0.7);
      \draw (-1,0.7) arc(90:150:3.7);
      \draw (-1,-0.7) arc(90:150:2.3);
      \fill (0,0) circle (2pt) node[right] {$u_0^\ast$};
      \draw[->,color=black!50] (-0.9,-1.2)  
      -- (-0.9,0) node {\tikz \fill (0,0) circle (1pt);};
      \node at (0,-1.4) {$u^0\left( \sum_{i=0}^{k-1} t_i \right)$};
      \node[above] at (0,1) {$B(u_0^\ast, \frac{\varepsilon}{2})$};
      \node[rotate=30] at (-3.8,1.5) {$\mathcal{N}_\varepsilon$};
      \node[rotate=30] at (-3,0.5) {$\mathcal{N}_{\tilde{\varepsilon}}$};
      \node[rotate=30] at (-2.5,-0.1) {$u^0(t)$};
    \end{tikzpicture}
  \caption{The idea of the proof of Theorem \ref{thm:convergenceBSC} is based on
  steering an iterate $u_k$ into the $\tilde{\varepsilon}$-ball around
  $u^0\smash[tb]{\left(\sum_{i=0}^{k-1} t_i\right)}$, which is fully contained
  in the $\varepsilon$-ball around the solution $u_0^\ast$, the region of local
  convergence. The short black vertical dashes on the generalized Newton path
  $u^0(t)$ mark the points $u^0(T_\ast - 1)$ and $u^0(T_\ast)$.}
  \label{fig:convergenceBSC}
\end{figure}
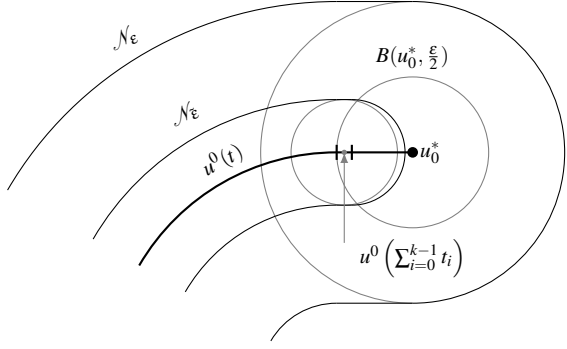

\begin{theorem}
  \label{thm:convergenceBSC}
  Let A\ref{ass:validIni}, A\ref{ass:kappa}, A\ref{ass:omega},
  A\ref{ass:fLipschitz}, and A\ref{ass:gamma} hold. If there exists an
  $\varepsilon > 0$ such that
  \[
  \mathcal{N}_\varepsilon := \bigcup_{t \in [0, \infty)} B(u^0(t), \varepsilon)
  \subseteq \mathcal{R}_r
  \]
  and $u_0^\ast$ is an isolated zero of $F$, then there exist constants
  $\widebar{H} >0$, $K < \infty$, and $c > 0$ such that the following statements
  hold true:
  \begin{enumerate}
    \item For all $H \in (0, \widebar{H}]$ iteration \eqref{eqn:Newton} with
      step size selection \eqref{eqn:BSC} converges to $u_0^\ast$.

    \item After $\smash[t]{\bar{k}} = \lceil K / \smash[t]{\sqrt{H}} \rceil$
      steps, $u_{\bar{k}}$ lies in the region of local full step convergence.

    \item The sequence of residual norms $\norm{F(u_k)}_V$ decreases
      geometrically and satisfies the additional a priori bound
      \[
        \sqrt{\norm{F(u_k)}_V}
        \le \sqrt{\norm{F(u_0)}_V} - k c \sqrt{H}
      \]
      for all $k \in \{ i \in \mathbb{N} \mid t_j < 1, j = 0, \dotsc, i \}$,
      which includes $k = \smash[t]{\bar{k}}.$
\end{enumerate}
\end{theorem}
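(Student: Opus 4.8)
The plan is to combine the global step-size bounds from Lemmas~\ref{lem:lowerStepsizeBound}--\ref{lem:dampedStepBound} with the one-step path-deviation estimate of Lemma~\ref{lem:oneStepError} and the arclength bound of Theorem~\ref{thm:arclength} to show that some iterate $u_k$ enters the neighborhood of $u_0^\ast$ where Theorem~\ref{thm:localConvergence} and Lemma~\ref{lem:linearconvergence} take over. First I would apply Lemma~\ref{lem:boundedLevelSets} at $u^\ast = u_0^\ast$ (using that $\mathcal{N}_\varepsilon \subseteq \mathcal{R}_r$ and that $u_0^\ast$ is isolated) to obtain an $\tilde\varepsilon$ with $\bigcup_{u \in B(u_0^\ast,\tilde\varepsilon)} \mathcal{T}(u) \subseteq B(u_0^\ast,\varepsilon)$; shrinking $\theta$-style if needed, I would also fix $\bar t \in (0,1)$ so small that $\omega r \norm{F(u_0)}_V \bar t \le 2\theta(1-\kappa)$ for some $\theta < 1$. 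The strategy is: as long as the iterate is "far" (say $\norm{f(u_k)}_U \ge \eta$ for a suitable $\eta$ tied to $\tilde\varepsilon$), backward step control takes damped steps $t_k < 1$ whose product with $\norm{f(u_k)}_U$ is bounded below (Lemma~\ref{lem:lowerStepsizeBound} gives $t_k \ge \sqrt{H}/\sqrt{rL\norm{F(u_0)}_V}$) and above (Lemma~\ref{lem:dampedStepBound}), so that each step makes a definite amount of "progress along the homotopy time" $\sum t_i$ while the residual $\norm{F(u_k)}_V$ decreases via Lemma~\ref{lem:Fdecrease}.

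The key geometric point is that the explicit-Euler iterate $u_k$ shadows the continuous generalized Newton path $u^0(t)$ evaluated at $t = T_k := \sum_{i=0}^{k-1} t_i$. I would prove this by a telescoping/induction argument: writing $u^0(T_{k+1}) - u_{k+1} = [u^0(T_k + t_k) - u^k(t_k)] + [u^k(t_k) - u_{k+1}]$, the second bracket is the explicit-Euler local error, of size $O(t_k^2 \norm{f(u_k)}_U) = O(H)$ since $t_k \norm{g(u_k,t_k)}_U \le H$ and $\norm{g(u_k,t_k)}_U \le L t_k \norm{f(u_k)}_U$, while the first bracket is controlled by Lemma~\ref{lem:oneStepError} (path contraction with factor $e^{L(t_k + \cdot)}$). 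Because the path $u^0$ eventually lies in $B(u_0^\ast, \tilde\varepsilon/2)$ — it converges to $u_0^\ast$ by Theorem~\ref{thm:arclength}, as $u_0^\ast \in \mathcal{R}_r$ forces $F(u_0^\ast) = 0$, and $T_\ast := $ the homotopy time at which $u^0$ enters $B(u_0^\ast,\tilde\varepsilon/2)$ is finite — and because each damped step advances $T_k$ by at least $\sqrt{H}/\sqrt{rL\norm{F(u_0)}_V}$, after $\bar k = \lceil K/\sqrt{H}\rceil$ steps (for $K$ absorbing $T_\ast$ and the constants) we have $T_{\bar k} \ge T_\ast$, hence $u^0(T_{\bar k}) \in B(u_0^\ast,\tilde\varepsilon/2)$; the accumulated shadowing error stays below $\tilde\varepsilon/2$ provided $H \le \widebar H$ is small enough, so $u_{\bar k} \in B(u_0^\ast,\tilde\varepsilon)$. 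Then $\mathcal{T}(u_{\bar k}) \subseteq B(u_0^\ast,\varepsilon) \subseteq \mathcal{R}_r$, the residual is small, and Lemma~\ref{lem:localFullSteps} gives $t_k = 1$ thereafter; Theorem~\ref{thm:localConvergence} yields convergence to $u_0^\ast$ and Lemma~\ref{lem:linearconvergence} the rate. This proves (1) and (2).

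For (3), the geometric decrease for $k \ge \bar k$ is immediate from \eqref{eqn:geomFConv}. The a priori bound $\sqrt{\norm{F(u_k)}_V} \le \sqrt{\norm{F(u_0)}_V} - kc\sqrt{H}$ on the damped phase I would get by feeding the first assertion of Lemma~\ref{lem:Fdecrease} into Lemma~\ref{lem:sqrtBound}: with $t_k < 1$ we have $t_k \ge \sqrt{H}/\sqrt{L\norm{f(u_k)}_U}$, so $(1-\kappa)t_k\norm{F(u_k)}_V \ge (1-\kappa)\sqrt{H}\norm{F(u_k)}_V/\sqrt{L\norm{f(u_k)}_U} \ge (1-\kappa)\sqrt{H}\norm{F(u_k)}_V/\sqrt{rL\norm{F(u_k)}_V} = (1-\kappa)\sqrt{H}\sqrt{\norm{F(u_k)}_V}/\sqrt{rL}$, and after choosing $\widebar H$ so small that the quadratic term $\tfrac{\omega}{2}\norm{f(u_k)}_U t_k^2 \le \tfrac12 (1-\kappa)t_k$ is absorbed, Lemma~\ref{lem:Fdecrease} gives $\norm{F(u_{k+1})}_V \le \norm{F(u_k)}_V - c'\sqrt{H}\sqrt{\norm{F(u_k)}_V}$ with $c' = \tfrac12(1-\kappa)/\sqrt{rL}$; setting $a_k = \sqrt{\norm{F(u_k)}_V}$ and $h = \tfrac12 c'\sqrt{H}$ this is exactly the hypothesis $a_{k+1}^2 \le a_k^2 - 2h a_k$ of Lemma~\ref{lem:sqrtBound}, so $a_k \le a_0 - kh$, i.e. $c = \tfrac14 c'$ works (up to bookkeeping of constants).

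The main obstacle I expect is the shadowing estimate: controlling the accumulated deviation $\norm{u^0(T_k) - u_k}_U$ uniformly in $k$ while simultaneously ensuring all the intermediate points $u^k(\tau)$, $u^0(T_k+\tau)$ stay inside $\mathcal{T}(u_0) \cap \mathcal{R}_r$ so that Lemmas~\ref{lem:oneStepError} and~\ref{lem:Fdescent} apply — the factor $e^{L T_k}$ from Lemma~\ref{lem:oneStepError} is dangerous if $T_k$ could grow large, so one must argue that the relevant comparison only ever runs over the bounded horizon $[0, T_\ast]$ (once past $T_\ast$ we are in the local-convergence regime and switch arguments), and that within that horizon each $O(H)$ Euler error, accumulated over $O(1/\sqrt{H})$ steps, still totals $O(\sqrt{H}) \to 0$. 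Getting the constant $K$ right — it must bound $T_\ast$, which itself depends only on $\norm{F(u_0)}_V$, $r$, $\kappa$, $\tilde\varepsilon$ via Theorem~\ref{thm:arclength}, and not on $H$ — is the delicate bookkeeping step but follows the template of \cite[\S8]{Potschka2016}.
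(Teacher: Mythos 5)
Your proposal follows essentially the same route as the paper's proof: shrink $\varepsilon$ and invoke Lemma~\ref{lem:boundedLevelSets} to build a capture region around $u_0^\ast$, use the step-size bounds (Lemmas~\ref{lem:lowerStepsizeBound}--\ref{lem:upperStepsizeBound}) to get $t_k \ge \sqrt{H}/\sqrt{rL\norm{F(u_0)}_V}$ and $\gamma t_k^2 \le H$ on the damped phase, shadow $u^0(\sum_{i<k}t_i)$ via the telescope/Gronwall argument built on Lemma~\ref{lem:oneStepError} to show the cumulative $O(H)\cdot O(1/\sqrt{H}) = O(\sqrt{H})$ deviation falls below $\tilde\varepsilon$, then hand off to Theorem~\ref{thm:localConvergence} and Lemma~\ref{lem:linearconvergence}, and finally feed Lemma~\ref{lem:Fdecrease} plus Lemma~\ref{lem:lowerStepsizeBound} into Lemma~\ref{lem:sqrtBound} for the a~priori residual bound. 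The recursive decomposition $u^0(T_{k+1})-u_{k+1} = [u^0(T_k+t_k)-u^k(t_k)] + [u^k(t_k)-u_{k+1}]$ and the direct absorption of the quadratic term in Lemma~\ref{lem:Fdecrease} are only cosmetic variants of the paper's telescoping sum and its use of the $\theta$-scaled second assertion, and the minor constant-tracking slips ($c'/4$ vs.\ $c'/2$, shrinking $\bar t$ vs.\ $\varepsilon$) are bookkeeping, not gaps.
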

\begin{proof}
  Because $u_0^\ast$ is an isolated zero of $F$, we can assume without loss
  of generality that $\varepsilon > 0$ was chosen small enough such that
  $u_0^\ast$ is the only zero of $F$ in $\mathcal{N}_\varepsilon$. 
  Let now $\theta \in (0, 1)$.  We further assume without loss of generality
  $\varepsilon$ to be sufficiently small to satisfy
  $\omega r^2 L \varepsilon \le 2 \theta (1-\kappa)$.
  It follows with A\ref{ass:validIni} and A\ref{ass:fLipschitz} that for all $u
  \in B(u_0^\ast, \varepsilon)$
  \begin{align*}
    \omega r \norm{F(u)}_V 
    &< \omega r^2 \norm{f(u)}_U
    = \omega r^2 \norm{f(u) - f(u_0^\ast)}_U\\
    &\le \omega r^2 L \norm{u - u_0^\ast}_U
    < \omega r^2 L \varepsilon
    \le 2 \theta (1 - \kappa).
  \end{align*}
  Thus, Theorem \ref{thm:localConvergence} establishes that $B(u_0^\ast,
  \varepsilon)$ is contained in the region of local full step convergence.
  By possibly reducing $\varepsilon$ further according to Lemma
  \ref{lem:boundedLevelSets}, we can guarantee that if $u_{k} \in
  B(u_0^\ast, \varepsilon)$ for some $k \in \mathbb{N}$, then $(u_k)_k$
  converges to the unique zero $u_0^\ast$ in $\mathcal{N}_\varepsilon.$ For the
  first statement, it now remains to show that $u_k \in B(u_0^\ast,
  \varepsilon)$ for some $k \in \mathbb{N}.$

  To this end, we choose $T_\ast < \infty$ such that
  \[
  \norm{u^0(t) - u_0^\ast}_U \le \frac{\varepsilon}{2} \quad \text{for all } t
  \ge T_\ast - 1.
  \]
  Because $u_0^\ast$ is the only zero of $F$ on $\mathcal{N}_\varepsilon$, we
  can choose an $\tilde{\varepsilon} \in (0, \frac{\varepsilon}{2})$ such that
  there is an $\eta > 0$ satisfying
  \[
  \norm{f(u)}_U > \eta \quad \text{for all } 
  u \in \mathcal{T}(u_0) \cap \mathcal{N}_{\tilde{\varepsilon}}, \text{ where }
  \mathcal{N}_{\tilde{\varepsilon}} := \bigcup_{t \in [0, T_\ast]} B(u^0(t),
  \tilde{\varepsilon}).
  \]
  From A\ref{ass:gamma} we obtain the existence of constants $\gamma > 0$ and
  $t_\gamma \in (0, 1)$ such that
  \begin{equation}
    \label{eqn:lbg}
    \norm{g(u, t)}_U \ge \gamma t \quad \text{for all } t \in [0, t_\gamma]
    \text{ and } u \in \mathcal{N}_{\tilde{\varepsilon}}.
  \end{equation}
  We can then use $\bar{t} = t_\gamma$ in Lemma \ref{lem:upperStepsizeBound} to
  obtain a constant $\widebar{H} > 0$ such that
  \begin{equation}
    \label{eqn:ubtk}
    \min \mathcal{B}_H(u) \le t_\gamma < 1 \quad \text{for all } H \in (0,
    \widebar{H}] \text{ and } u \in \mathcal{N}_{\tilde{\varepsilon}}.
  \end{equation}
  In anticipation of a later argument in the proof, we can assume without loss
  of generality that $\widebar{H}$ is sufficiently small to satisfy
  \begin{equation}
    \label{eqn:smallHbar}
    T_\ast e^{L T_\ast}\left( rL \norm{F(u_0)}_V \right)^{\frac{3}{2}}
    \smash[t]{\widebar{H}}^{\frac{1}{2}} \le 2 \gamma \tilde{\varepsilon}.
  \end{equation}
  Combining the inequalities \eqref{eqn:lbg} and \eqref{eqn:ubtk} with
  \eqref{eqn:BSC}, we see that
  \begin{equation}
    \label{eqn:lbtk2}
    \gamma t_k^2 \le t_k \norm{g(u_k, t_k)}_U \le H \quad \text{for all } H \in
    (0, \widebar{H}] \text{ and } u_k \in \mathcal{N}_{\tilde{\varepsilon}}.
  \end{equation}
  We now choose an $H$-dependent $\bar{k} \in \mathbb{N}$ that satisfies
  \[
  T_\ast - 1 \le \sum_{i=0}^{\bar{k}-1} t_i \le T_\ast
  \]
  and show by induction that $u_k \in \mathcal{T}(u_0) \cap
  \mathcal{N}_{\tilde{\varepsilon}}$ for all $k \le \bar{k}$, which clearly
  holds true for $k=0$ because $u_0 = u^0(0) \in B(u_0, \tilde{\varepsilon}).$
  We can now assume inductively that $u_i \in \mathcal{T}(u_0) \cap
  \mathcal{N}_{\tilde{\varepsilon}}$ for all $i \le k-1$ with $k \le \bar{k}$ in
  order to show $u_k \in  \mathcal{T}(u_0) \cap
  \mathcal{N}_{\tilde{\varepsilon}}.$
  Because $\mathcal{N}_{\tilde{\varepsilon}} \subseteq \mathcal{R}_r$, it
  follows from \eqref{eqn:ubtk} that $t_i < 1$. Hence, Lemma
  \ref{lem:lowerStepsizeBound} yields
  \[
  T_\ast \ge
  \sum_{i=0}^{k-1} t_i \ge k \frac{\sqrt{H}}{\sqrt{r L \norm{F(u_0)}_V}},
  \]
  which implies the bound
  \begin{equation}
    \label{eqn:ubkbar}
    k \le \frac{T_\ast \sqrt{rL\norm{F(u_0)}_V}}{\sqrt{H}}.
  \end{equation}
  We then obtain by a telescope argument, Lemma \ref{lem:oneStepError},
  A\ref{ass:validIni}, \eqref{eqn:lbtk2}, \eqref{eqn:ubkbar}, and
  \eqref{eqn:smallHbar} that
  \begin{align*}
    \norm{u^0 \left( \sum_{i=0}^{k-1} t_i \right) - u_k}_U
    &\le \sum_{j=0}^{k-1} \norm{u^j \left( \sum_{i=j}^{k-1} t_i \right) -
    u^{j+1} \left( \sum_{i=j+1}^{k-1} t_i \right)}_U\\
    &\le
    \sum_{j=0}^{k-1} \frac{1}{2} \norm{f(u_{j})}_U L e^{L T_\ast} t_j^2
    < \frac{rL e^{LT_\ast} \norm{F(u_0)}_V}{2 \gamma} H k\\
    &\le \frac{T_\ast e^{LT_\ast} \left( rL \norm{F(u_0)}_V
    \right)^{\frac{3}{2}} }{2 \gamma} \sqrt{H} \le \tilde{\varepsilon}.
  \end{align*}
  Because $\sum_{i=0}^{k-1} t_i \le T_\ast$, we have established by induction
  that $u_k \in \mathcal{T}(u_0) \cap \mathcal{N}_{\tilde{\varepsilon}}$ for all
  $k \le \bar{k}$. Finally, $u_{\bar{k}} \in B(u_0^\ast, \varepsilon)$ by virtue
  of
  \[
  \norm{u_0^\ast - u_{\bar{k}}}_U
  \le \norm{ u_0^\ast - u^0 \left( \sum_{i=0}^{\bar{k}-1} t_i \right)}_U
  + \norm{ u^0 \left( \sum_{i=0}^{\bar{k}-1} t_i \right) - u_{\bar{k}}}_U
  < \frac{\varepsilon}{2} + \tilde{\varepsilon} < \varepsilon.
  \]
  This proves the first statement.
  The second assertion follows from \eqref{eqn:ubkbar} by choosing $K = T_\ast
  \sqrt{r L \norm{F(u_0)}_V}$ for $k = \bar{k}$.
  For the third statement, we obtain from the lower step size bound of Lemma
  \ref{lem:lowerStepsizeBound} and Lemma \ref{lem:Fdecrease} that for all $k$
  with $t_k < 1$ it holds that
  \begin{gather*}
    \norm{F(u_{k+1})}_V \le \left[ 1 - (1-\theta)(1-\kappa)t_k \right]
    \norm{F(u_k)}_V
    \le \norm{F(u_k)}_V - 2 c \sqrt{H \norm{F(u_k)}_V},\\
    \text{with } c = \frac{(1-\theta)(1-\kappa)}{2 \sqrt{rL}} > 0.
  \end{gather*}
  Hence, the residual norm sequence converges geometrically and
  Lemma \ref{lem:sqrtBound} yields the a priori bound with $a_k =
  \sqrt{\norm{F(u_k)}_V}$ and $h = c \sqrt{H}$. Because we established $u_k \in
  \mathcal{N}_{\tilde{\varepsilon}}$ for all $k \le \smash[t]{\bar{k}}$ in
  the proof of the first statement, we obtain $t_k < 1$ for all $k \le
  \smash[t]{\bar{k}}$ from \eqref{eqn:ubtk}, which completes the proof.
  \qed
\end{proof}

\subsection{A note on generalizations to Banach spaces}
\label{sec:Banach}

The only step in the convergence proof that exploits the Hilbert space structure
of $V$ is Lemma \ref{lem:Fdescent}. All remaining steps can be carried out even
if $V$ is only a Banach space.  The general approach here is to modify the used
level function and to require an additional assumption akin to condition
A\ref{ass:kappa}.

In this section, we shortly present the
necessary modifications for the special case of the Lebesgue space $V =
L^p(\Omega)$ of real-valued $p$-integrable functions with $ 2 < p <
\infty$ and $\Omega \subset \mathbb{R}^n$. First, we need to consider a
different level function 
\[
T_p(u) = \frac{1}{p} \norm{F(u)}_V^p 
= \frac{1}{p} \int_{\Omega} \abs{F(u)(s)}^p \ud s,
\]
with analogously defined level sets $\mathcal{T}_p(u)$.
In addition to A\ref{ass:kappa}, we require pointwise that
\[
\left[F(u) - F'(u)f(u)\right](s) \le \kappa \abs{F(u)(s)} \quad \text{for almost all } s
\in \Omega \text{ and all } u \in \mathcal{R}_r \cap \mathcal{T}_p(u_0).
\]
Standard arguments on the differentiability of $p$-norms (see, e.g., \cite[Thm.
2.6]{Lieb2001}) then deliver with the abbreviation $u = u^k(t)$ that
\begin{align*}
  & \quad\, \frac{\ud}{\ud t} T_p(u^k(t))
  = -\int_{\Omega} \abs{F(u)(s)}^{p-2} \left[ F(u)(s) \cdot ( F'(u) f(u)) (s)
  \right] \ud s\\
  &= -\int_{\Omega} \abs{F(u)(s)}^p \ud s 
  + \int_{\Omega} \abs{F(u)(s)}^{p-2} \left[ F(u)(s) \cdot (F(u) - F'(u) f(u))
  (s) \right] \ud s\\
  &\le -p T_p(u)
  + \int_{\Omega} \abs{F(u)(s)}^{p-1} (F(u) - F'(u) f(u)) (s) \ud s\\
  &\le -p(1-\kappa) T_p(u) \le 0.
\end{align*}
After application of Gronwall's inequality, multiplication by $p$ and taking the
$p$-th root, we establish the result of Lemma \ref{lem:Fdescent}
\[
\norm{F(u^k(t))}_V \le e^{-(1-\kappa)t} \norm{F(u_k)}_V.
\]

\subsection{Algorithmic realization}
\label{sec:realization}

The algorithmic realization of \eqref{eqn:BSC} can be carried over verbatim from
the finite-dimensional setting laid out in \cite[section~10]{Potschka2016} with the
use of $\norm{.}_U$ for all occuring norms. As in \cite{Potschka2016}, we do not
use monotone iterations \cite{Galantai2014} for numerical computations here but
use the bisection procedure with exponentially smoothed step size prediction.
For convenience, we sketch it again: In order to determine $t_k$ from
\eqref{eqn:BSC}, we approximately compute a zero of the Lipschitz continuous
scalar function $t \mapsto t \norm{g(u_k, t)}_U - H$ by a bracketing procedure.
Numerically, we are content with a $t_k$ that satisfies
\begin{equation}
  \label{eqn:HlHu}
  H'_k := t_k \norm{g(u_k, t_k)}_U \in [H^\mathrm{l}, H^\mathrm{u}] 
  \quad \text{or} \quad
  t_k \approx 1 \text{ and } H'_k < H^\mathrm{l},
\end{equation}
where $H^\mathrm{l} < H$ and $H^\mathrm{u} > H$ are close to $H$. 

\lstdefinestyle{mymatlab}{%
  basicstyle=\footnotesize,
  basewidth=0.5em,
  showstringspaces=false,
  numbers=left,
  xleftmargin=2.5em,
  framexleftmargin=2em}
\begin{table}[bt]
  \caption{Example Matlab code for solving $\arctan(u) = 0$ from $u_0 = 2$ with
  backward step control.}
  \label{alg:example_atan}
  \lstinputlisting[language=Matlab,style=mymatlab]{example_atan.m}
\end{table}

For illustration purposes, we provide in Table \ref{alg:example_atan}
example code in Matlab, which computes for the real-valued function $F(u) =
\arctan(u)$ the iterates $u_1, \dotsc, u_5$ of iteration \eqref{eqn:Newton} with
$M(u) = F'(u)^{-1} = u^2 + 1$ and step sizes $t_k$ satisfying \eqref{eqn:HlHu},
starting from $u_0 = 2$ for $H = 0.8$. 
Full step Newton diverges for this choice of $u_0$.
For the sake of brevity, the remaining
algorithmic parameters suggested in \cite[section~10.2]{Potschka2016} are used
as explict values in the code and termination and error checks are omitted.

\begin{table}[tb]
  \caption{Output of the Matlab code from Table \ref{alg:example_atan}. The 
  columns display the iteration number $k$, the current step size $t_k$, the
  current iterate $u_k$, the current increment $\delta u_k$, the next trial
  increment $\delta u_k^+$ and the value $H'_k = t_k \norm{g(u_k, t_k)}_2 = t_k
  \norm{\smash[t]{\delta u_k^+ - \delta u_k}}_2$, which must be in
  $[H^\mathrm{l}, H^\mathrm{u}]$ for $t_k < 1$ to be accepted.}
  \label{tab:output}
  \centering
  \begin{verbatim}
  k       t         u        du       dup    Hprime
  0  1.0000   2.0e+00  -5.5e+00   1.7e+01   2.3e+01  decrease t
  0  0.5000   2.0e+00  -5.5e+00   1.0e+00   3.3e+00  decrease t
  0  0.2500   2.0e+00  -5.5e+00  -7.6e-01   1.2e+00  accept t
  1  0.2335   6.2e-01  -7.6e-01  -4.9e-01   6.3e-02  increase t
  1  0.6168   6.2e-01  -7.6e-01  -1.5e-01   3.8e-01  accept t
  2  0.7543   1.5e-01  -1.5e-01  -3.4e-02   8.6e-02  accept t
  3  1.0000   3.4e-02  -3.4e-02   2.7e-05   3.4e-02  accept t
  4  1.0000  -2.7e-05   2.7e-05  -1.3e-14   2.7e-05  accept t
  5  1.0000   1.3e-14  -1.3e-14  -0.0e+00   1.3e-14  accept t
  \end{verbatim}
\end{table}

We display the output in Table \ref{tab:output}. Each line in the output
corresponds to one evaluation of $f(u) = M(u) F(u)$, not counting the initial
evaluation of $f(u_0)$. We observe that for $k = 0$, two bisection steps are
required to reduce $t_0$ to $\frac{1}{4}$. For $k = 1$, one bisection step is
required to increase $t_1$ to above $0.6$. For $k \ge 2$, the predicted step
sizes are accepted without further bisection steps. The iterate $u_5$, which is
known already at the end of iteration 4, is an acceptable solution candidate,
because $\norm{\delta u_5}_2 \approx 1.3 \cdot 10^{-14}$.

The smoothed step size prediction in line 5 is vital for reducing the
number of bracketing steps. The predicted step size often
already satisfies \eqref{eqn:HlHu} in all but a few iterations and thus almost
no extra computational effort in terms of increment evaluations $f(u_k)$, which
typically comprise setting up and solving one linear system, is required for the
globalization procedure in most iterations.

\subsection{Discussion of drawbacks}

Backward step control appears to suffer from two substantial drawbacks: First,
it depends on a problem specific parameter $H > 0$ that needs to be chosen
sufficiently small for convergence, but not too small to cause unecessarily many
iterations and thus inefficiency of the method. Second, it may not converge in
one step on affine linear problems. We argue here that these two seemingly
detrimental properties are actually \emph{necessary} for the class of methods
that converge to the closest solution (in the sense of the Newton flow): Let us
assume we want to solve a one-dimensional nonlinear equation $F(u) = 0$, $F:
\mathbb{R} \to \mathbb{R}$, with a certain \emph{parameter-free} nonlinear
method. Assume it produces a sequence of iterates $(u^k)_{k \in \mathbb{N}}
\subset \mathbb{R}$ that converges to some $u^\ast \neq u^0$. Because $u^0$
cannot be an accumulation point of $(u^k)_{k \in \mathbb{N}}$, we can find the
closest iterate $u^j$ to $u^0$ for some $j \in \mathbb{N}$. Any function that
differs from $F$ only on the open interval $I$ between $u^0$ and $u^j$ will
inevitably lead to the same iterates $(u^k)_{k \in \mathbb{N}}$ if we apply the
parameter-free nonlinear method to it. Thus, we can modify $F$ smoothly on $I$
to introduce zeros of $F$ that are closer to $u^0$ than $u^\ast$. We illustrate
this construction for an affine linear $F$ in Fig.~\ref{fig:drawback}. We
conclude that any method that provably converges to the closest zero must depend
on a parameter such as $H$ to account for problem specific quantities that are
virtually impossible to estimate numerically.

\begin{figure}[tb]
  \sidecaption
  \begin{tikzpicture}
\begin{axis}[xlabel=$u$,ylabel=$F(u)$,height=4cm,width=7cm,
    xmin=-1.2,ymin=-0.2,xmax=1.2,ymax=2.2]
  \addplot[mark=none,color=black,dotted] coordinates {
    (-1.2, 0)
    (1.2, 0)
  };
  \addplot[mark=none,color=black,dashed] coordinates {
    (-1.2, 2.2)
    (1.2, -0.2)
  };
  \addplot[only marks,color=black,mark=x] coordinates {
    (-1,2)
    (1,0)
  } node[pos=0,below] {$F(u^0)$} node[pos=1,above] {$F(u^1)$};
  \addplot[only marks,color=black,mark=+] coordinates {
    ( -7.70893737e-02, 0)
    (  2.70574066e-01, 0)
  };
  \addplot[mark=none,color=black] coordinates {
    ( -5.00000000e-01,   1.50000000e+00)
    ( -4.90000000e-01,   1.49000000e+00)
    ( -4.80000000e-01,   1.47999134e+00)
    ( -4.70000000e-01,   1.46944273e+00)
    ( -4.60000000e-01,   1.45553642e+00)
    ( -4.50000000e-01,   1.43446323e+00)
    ( -4.40000000e-01,   1.40435020e+00)
    ( -4.30000000e-01,   1.36553504e+00)
    ( -4.20000000e-01,   1.31955318e+00)
    ( -4.10000000e-01,   1.26827841e+00)
    ( -4.00000000e-01,   1.21347043e+00)
    ( -3.90000000e-01,   1.15660287e+00)
    ( -3.80000000e-01,   1.09883454e+00)
    ( -3.70000000e-01,   1.04104061e+00)
    ( -3.60000000e-01,   9.83862184e-01)
    ( -3.50000000e-01,   9.27756039e-01)
    ( -3.40000000e-01,   8.73037703e-01)
    ( -3.30000000e-01,   8.19916462e-01)
    ( -3.20000000e-01,   7.68522726e-01)
    ( -3.10000000e-01,   7.18929016e-01)
    ( -3.00000000e-01,   6.71165839e-01)
    ( -2.90000000e-01,   6.25233613e-01)
    ( -2.80000000e-01,   5.81111573e-01)
    ( -2.70000000e-01,   5.38764389e-01)
    ( -2.60000000e-01,   4.98147069e-01)
    ( -2.50000000e-01,   4.59208586e-01)
    ( -2.40000000e-01,   4.21894527e-01)
    ( -2.30000000e-01,   3.86149043e-01)
    ( -2.20000000e-01,   3.51916244e-01)
    ( -2.10000000e-01,   3.19141214e-01)
    ( -2.00000000e-01,   2.87770706e-01)
    ( -1.90000000e-01,   2.57753640e-01)
    ( -1.80000000e-01,   2.29041419e-01)
    ( -1.70000000e-01,   2.01588131e-01)
    ( -1.60000000e-01,   1.75350666e-01)
    ( -1.50000000e-01,   1.50288769e-01)
    ( -1.40000000e-01,   1.26365036e-01)
    ( -1.30000000e-01,   1.03544899e-01)
    ( -1.20000000e-01,   8.17965729e-02)
    ( -1.10000000e-01,   6.10909940e-02)
    ( -1.00000000e-01,   4.14017556e-02)
    ( -9.00000000e-02,   2.27050349e-02)
    ( -8.00000000e-02,   4.97952203e-03)
    ( -7.00000000e-02,  -1.17936478e-02)
    ( -6.00000000e-02,  -2.76309593e-02)
    ( -5.00000000e-02,  -4.25465749e-02)
    ( -4.00000000e-02,  -5.65523876e-02)
    ( -3.00000000e-02,  -6.96580654e-02)
    ( -2.00000000e-02,  -8.18710888e-02)
    ( -1.00000000e-02,  -9.31967799e-02)
    (  0.00000000e+00,  -1.03638324e-01)
    (  1.00000000e-02,  -1.13196780e-01)
    (  2.00000000e-02,  -1.21871089e-01)
    (  3.00000000e-02,  -1.29658065e-01)
    (  4.00000000e-02,  -1.36552388e-01)
    (  5.00000000e-02,  -1.42546575e-01)
    (  6.00000000e-02,  -1.47630959e-01)
    (  7.00000000e-02,  -1.51793648e-01)
    (  8.00000000e-02,  -1.55020478e-01)
    (  9.00000000e-02,  -1.57294965e-01)
    (  1.00000000e-01,  -1.58598244e-01)
    (  1.10000000e-01,  -1.58909006e-01)
    (  1.20000000e-01,  -1.58203427e-01)
    (  1.30000000e-01,  -1.56455101e-01)
    (  1.40000000e-01,  -1.53634964e-01)
    (  1.50000000e-01,  -1.49711231e-01)
    (  1.60000000e-01,  -1.44649334e-01)
    (  1.70000000e-01,  -1.38411869e-01)
    (  1.80000000e-01,  -1.30958581e-01)
    (  1.90000000e-01,  -1.22246360e-01)
    (  2.00000000e-01,  -1.12229294e-01)
    (  2.10000000e-01,  -1.00858786e-01)
    (  2.20000000e-01,  -8.80837555e-02)
    (  2.30000000e-01,  -7.38509574e-02)
    (  2.40000000e-01,  -5.81054729e-02)
    (  2.50000000e-01,  -4.07914143e-02)
    (  2.60000000e-01,  -2.18529306e-02)
    (  2.70000000e-01,  -1.23561111e-03)
    (  2.80000000e-01,   2.11115735e-02)
    (  2.90000000e-01,   4.52336131e-02)
    (  3.00000000e-01,   7.11658385e-02)
    (  3.10000000e-01,   9.89290159e-02)
    (  3.20000000e-01,   1.28522726e-01)
    (  3.30000000e-01,   1.59916462e-01)
    (  3.40000000e-01,   1.93037703e-01)
    (  3.50000000e-01,   2.27756039e-01)
    (  3.60000000e-01,   2.63862184e-01)
    (  3.70000000e-01,   3.01040605e-01)
    (  3.80000000e-01,   3.38834539e-01)
    (  3.90000000e-01,   3.76602871e-01)
    (  4.00000000e-01,   4.13470428e-01)
    (  4.10000000e-01,   4.48278408e-01)
    (  4.20000000e-01,   4.79553180e-01)
    (  4.30000000e-01,   5.05535042e-01)
    (  4.40000000e-01,   5.24350203e-01)
    (  4.50000000e-01,   5.34463227e-01)
    (  4.60000000e-01,   5.35536421e-01)
    (  4.70000000e-01,   5.29442726e-01)
    (  4.80000000e-01,   5.19991337e-01)
    (  4.90000000e-01,   5.10000000e-01)
    (  5.00000000e-01,   5.00000000e-01)
  };
\end{axis}
\end{tikzpicture}
  \caption{Any method that solves affine linear functions with evaluations of
    $F$ in $u^0$ and $u^\ast$ only, i.e., in one step, will generate the iterate
    $u^1 = u^\ast = 1$ for the function $F(u) = 1 - x$ (dashed graph). Thus, it
    will produce the same iterates for the function $F$ modified smoothly on
    $(-1, 1)$ as indicated by the solid curve. Consequently, it will miss the
    zeros closer to $u^0$ indicated with $+$ marks.}
  \label{fig:drawback}
\end{figure}
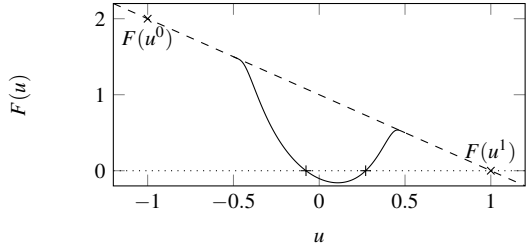

Moreover, existing globalization methods
\cite{Deuflhard1991,Hohmann1994,Deuflhard1998,Deuflhard2006} are not
parameter free either, because they require an initial step size guess $t_0$
that needs to be sufficiently small.

\section{Design of Newton-type methods}
\label{sec:designM}

The convergence analysis of backward step control lends itself immediately to
the design of globally convergent Newton-type methods. The two required steps
are:
\begin{enumerate}
  \item Define $M$ (or directly $f$) respecting the $\kappa$-condition
    A\ref{ass:kappa}.
  \item Use \eqref{eqn:BSC} to determine the step size sequence $(t_k)$.
\end{enumerate}
The second step is generic. For the first step, we give two important examples
in the following two sections.
Both methods find approximations $\delta u_k$ of the Newton increment $\delta
u_k^\mathrm{Newton}$ determined by the (infinite-dimensional) linear system
\begin{equation}
  \label{eqn:HilbertSpaceNewton}
  F'(u_k) \delta u_k^\mathrm{Newton} = -F(u_k).
\end{equation}
We emphasize that the operator $M(u_k)$ is implicitly determined by requiring
$\delta u_k = -M(u_k) F(u_k)$. It is not required to actually compute $M(u_k)$,
as long as we have $\delta u_k$. However, we need to make sure that $f$ is
Lipschitz continuous with respect to $u$.

\subsection{Krylov--Newton methods}
\label{sec:KrylovNewton}

Krylov subspace methods like CG, MINRES, or GMRES
\cite{Hestenes1952,Paige1975,Saad1986} for the iterative solution of linear
systems, originally developed for large but finite-dimensional sparse systems,
can also be stated for infinite-dimensional linear operators. The
convergence theory is more complicated in the infinite-dimensional case (see,
e.g., \cite{Nevanlinna1993,Gasparo2008}). The structure of the linear mapping
$F'(u_k): U \to V$ does usually not immediately admit the application of Krylov
subspace methods, unless a left preconditioner $P_k: V \to U$ is available, such
that $A_k := P_k F'(u_k)$ is an endomorphism on $U$. Then, the $m$-th iterate
$\delta u_k^{m}$ of a Krylov subspace method applied to
\eqref{eqn:HilbertSpaceNewton} is an approximate solution restricted to the (at
most $m$-dimensional) $m$-th Krylov subspace
\[
\mathcal{K}^m(A_k, P_k F(u_k)) = \left\{ q(A_k) P_k F(u_k) \mid
  q \text{ is a polynomial of degree less than } m \right\}.
\]
We now focus on residual minimizing Krylov subspace like GMRES and MINRES, which
are constructed on the basis of the additional optimality requirement
\begin{equation}
  \label{eqn:residual_minimizing}
  \delta u_k^m = \argmin_{\delta u \in \mathcal{K}^m(A_k, P_k F(u_k))}
  \norm{P_k F(u_k) + A_k \delta u_k)}_U.
\end{equation}
As in \cite{Guennel2014}, we investigate the important special case where
$V = U^\ast$. In this setting, we can choose $P_k$ as the Riesz isomorphism and
immediately obtain
\begin{equation}
  \label{eqn:Krylov}
  \begin{aligned}
    \norm{P_k F(u_k) + A_k \delta u_k)}_U &=
    \norm{P_k \left[ F(u_k) + F'(u_k) \delta u_k \right]}_U\\
    &= \norm{F(u_k) + F'(u_k) \delta u_k)}_V.
  \end{aligned}
\end{equation}
In the light of \eqref{eqn:residual_minimizing} and \eqref{eqn:Krylov}, the
$\kappa$-condition A\ref{ass:kappa} is nothing but the classical relative
termination condition used by Krylov subspace methods with given relative
tolerance $\kappa$. In other words, GMRES and MINRES choose from the Krylov
subspace the increment that achieves the smallest left-hand side in the
condition of A\ref{ass:kappa} and thus yield (with Lemma
\ref{lem:linearconvergence}) an asymptotic linear convergence rate bounded above
by $\kappa$ for the nonlinear Krylov--Newton
method. Also for other Krylov space methods like preconditioned CG, the most
commonly used termination criterion is that of the relative residual, even
though the relative residual is not guaranteed to decrease monotonically.
Thus, the backward step convergence theory of section~\ref{sec:convergence}
delivers suitable termination criteria for the inner linear iterations.
In particular, a rather loose relative stopping criterion of, say, $\kappa =
\frac{1}{10}$ delivers asymptotically already one decimal digit of accuracy per
nonlinear iteration.

With this approach, there is one theoretic gap we need to close: The increment
$-f(u_k) = -M(u_k) F(u_k) = \delta u_k^m$ depends on the number of Krylov
subspace iterations $m$, which is determined adaptively. 
As a concatenation of a finite number of Lipschitz continuous operations, the
$m$-th Krylov iterate depends Lipschitz continuously on $u_k$, but changes in
$m$ from one nonlinear $k$-iteration to another can lead to discontinuities in
the operator $M$. However, a small modification of the above approach can make
sure that $M(u)$ and thus $f(u)$ are Lipschitz continuous with respect to $u$,
as required for A\ref{ass:fLipschitz}: Instead of using the final iterate
$\delta u_k^m$, we could use a linear combination $f(u_k) = (1-\alpha_k) \delta
u_k^{m-1} + \alpha_k \delta u_k^m$ of the two last iterates such that instead of
the inequality A\ref{ass:kappa} the equality
\[
\nu(\alpha_k) := \norm{F(u_k) + F'(u_k) \left[ (1-\alpha_k) \delta u_k^{m-1} +
\alpha_k \delta u_k^m \right]}_V = \kappa \norm{F(u_k)}_V
\]
holds for some $\alpha_k \in [0, 1]$. This is always possible by virtue of the
intermediate value theorem applied to the continuous function $\nu(\alpha)$,
which satisfies $\nu(0) > \kappa \norm{F(u_k)}$ and $\nu(1) \le \kappa
\norm{F(u_k)}$ because the Krylov subspace method has terminated in step $m$ but
not yet in step $m-1$. If we now assume that there is an upper bound on the
number of Krylov iterations required to satisfy the relative termination
criterion A\ref{ass:kappa} on the level set of $u^0$, we can establish Lipschitz
continuity of $M$.

Based on our experience in practical computations, however, it is more efficient
to always use $\alpha_k = 1$ and robustify the bisection procedure for the
approximate solution to \eqref{eqn:BSC} against discontinuities of $g$ by
relaxing the lower bound $H^\mathrm{l}$ in \eqref{eqn:HlHu} closer to zero. This
might give rise to smaller than necessary step sizes $t_k$, which has not been
observed to be problematic in practical computations, but usually delivers
faster local residual contraction once $t_k = 1$.

We report numerical results of a Krylov--Newton method for the Carrier equation
in section~\ref{sec:carrier}.

\subsection{Approximation by discretization}
\label{sec:M_by_discretization}

Following the multilevel Newton approach of \cite{Hohmann1994},
we can also compute an approximate solution $\delta u_k$ by first discretizing
\eqref{eqn:HilbertSpaceNewton} and then (approximately) solving the discretized
system. The $\kappa$-condition A\ref{ass:kappa} yields a computable criterion
for checking if the approximation is accurate enough to ensure convergence in
$U$. If not, we need to improve the discretization (and possibly the accuracy of
the approximate solution to the resulting finite-dimensional linear system).

In this conceptually simple approach, challenges can arise in the evaluation of
the $V$-norms in A\ref{ass:kappa}. We address these
issues for the case of an elliptic partial differential equation in
section~\ref{sec:minsurf}. Moreover, the evaluation of the $V$-norm in
A\ref{ass:kappa} can provide a means to adaptively discretize
\eqref{eqn:HilbertSpaceNewton} and in turn also the original nonlinear problem.

As in section~\ref{sec:KrylovNewton}, the procedure for approximating a solution to
\eqref{eqn:HilbertSpaceNewton} usually involves some discrete decisions, for
instance the marking and refinement of certain discretization cells as $k$
increases. Thus, the so constructed $f(u)$ is not Lipschitz continuous. 
A smoothed formulation akin to the interpolation construction in
section~\ref{sec:KrylovNewton} exceeds the scope of this article and shall be
investigated in future work.

\section{Numerical examples from nonlinear elliptic boundary value problems}
\label{sec:ellipticPDE}

In this section, we illustrate the general paradigms presented in
section~\ref{sec:M_by_discretization} for the class of elliptic boundary value problems
on a bounded domain $\Omega \subset \R^n$ with continuously differentiable
boundary $\partial \Omega$ that can be cast as nonlinear root-finding problems
\eqref{eqn:FOfxIsZero} with the Sobolev spaces $U = H^1_0(\Omega)$ and $V =
H^{-1}(\Omega)$.   

Based on the Poincar\'e inequality (see, e.g., \cite{Evans2010}), we can use the
inner product 
\[
\inprod{u,v}_U = \int_{\Omega} \nabla u \cdot \nabla v
\]
for the Hilbert space $U$.  For the Hilbert space $V$, we can then compute norms
via the Riesz representation theorem \cite[\S III.3]{Yosida1978}: For every $v
\in V$, there exists a uniquely determined $r_v \in U$ such that 
\begin{equation}
  \label{eqn:RieszRepresentation}
  \inprod{u, r_v}_U = \int_{\Omega} v u \text{ for all } u \in U
  \quad \text{and} \quad
  \norm{v}_V = \norm{r_v}_U.
\end{equation}

We investigate the numerical performance of backward step control on two
nonlinear elliptic boundary value problems on bounded domains $\Omega$ with
continuously differentiable boundary.

All algorithmic parameters of backward step control are chosen as in
\cite{Potschka2016} unless otherwise stated.

\subsection{Preconditioned Krylov subspace methods}
\label{sec:PGMRES}

We first illustrate the general Krylov subspace method approach presented in
section~\ref{sec:KrylovNewton}. 
As our focus in the case of Krylov--Newton methods here lies on a concise
algorithmic statement rather than ultimate computational speed, we use
Chebfun \cite{Battles2004,Driscoll2014} as an algorithmic tool for the numerical
results in section~\ref{sec:carrier}, because it allows to compute numerically
with functions (represented as adaptively truncated Chebyshev expansions)
instead of numbers \cite{Trefethen2007} and supports the automatic computation
of Fr\'echet derivatives by the use of automatic differentiation in function
space \cite{Birkisson2012}.

Because the linear operators in Chebfun are implemented in strong form, we also
use the strong form of the inner product in $U$
\[
\inprod{u,v}_U = -\int_{-1}^{1} u \Lap v = -\int_{-1}^{1} v \Lap u, 
\]
(which requires $u$ or $v$ to have square integrable second derivatives).  It
follows from \eqref{eqn:RieszRepresentation} that 
\[
-\int_{-1}^{1} u \Lap r_v = \inprod{u, r_v}_U = \int_{-1}^{1} v u 
\quad \text{for all } u \in U,
\]
and, thus, $r_v = -\Lap^{-1} v$ and
\[
\norm{v}_V^2 = \norm{r_v}_U^2 = \norm{-\Lap^{-1} v}_U^2.
\]
In Chebfun, $\Lap^{-1}$ can be evaluated efficiently using ultraspherical
spectral collocation \cite{Olver2013}. 
Based on these prerequisites, we modified Chebfun's builtin GMRES to use the
inner product and norm of $U$ (instead of $L^2(\Omega)$) in combination with
$\Lap^{-1}$ as a preconditioner, which yields the correct residual norm
$\norm{\Lap^{-1}v}_U = \norm{v}_V$. 
We note that we could have used MINRES or even CG because the resulting
left-preconditioned linear system is self-adjoint and positive definite in the
case at hand \cite{Guennel2014}. This does not affect our results dramatically,
because GMRES produces the same iterates as MINRES for self-adjoint systems.
However, the current version of Chebfun does not ship a MINRES implementation.
The numerical results for the Carrier equation in section~\ref{sec:carrier}
indicate that using the Riesz isomorphism as a preconditioner works
satisfactorily in our example.

\subsubsection{Application to the Carrier equation}
\label{sec:carrier}

\begin{figure}[bt]
  \begin{center}
    \input{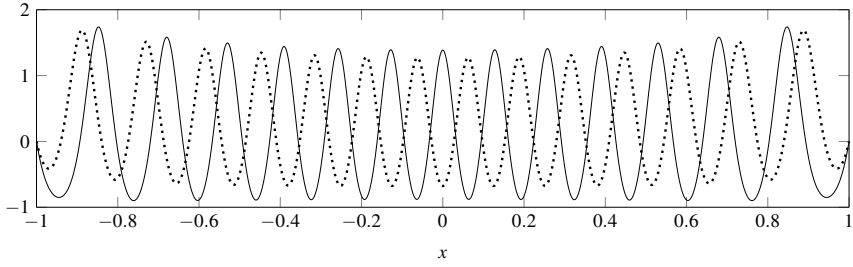}
  \end{center}
  \caption{The local solutions to the Carrier equation with $\varepsilon =
  10^{-3}$ obtained by a GMRES--Newton method with backward step control for
  $\kappa = 10^{-2}$ and varying values of $H_\mathrm{rel}$ (0.1: solid, 0.05
  and 0.01: dotted).}
  \label{fig:carrier_solution}
\end{figure}

For $\varepsilon > 0$, we want to determine a real-valued function $u(x)$ on
$x \in [-1, 1]$ that solves the nonlinear second order boundary value problem
\begin{equation}
  \label{eqn:carrier}
  \varepsilon \Lap u + 2 (1-x^2) u + u^2 = 1, \quad u(\pm 1) = 0,
\end{equation}
which---according to \cite[\S 9.7]{Bender1999}---is due to Carrier. For small
$\varepsilon$, it becomes challenging to solve \eqref{eqn:carrier} because of
the existence of many local solutions (compare Fig.~\ref{fig:carrier_solution}).

We apply the GMRES--Newton method described in section~\ref{sec:PGMRES} to
\eqref{eqn:carrier} and illustrate the
convergence of the method in Fig.~\ref{fig:carrier_convergence} for $\varepsilon
= 10^{-3}$, $\kappa = 10^{-2}$, and varying values of $H = H_\mathrm{rel}
\norm{\delta u_0}_U$, where we choose $H_\mathrm{rel} \in \{ 0.5, 0.1, 0.05,
0.01 \}$. The initial guess is $u_0 = 0$ and the termination criterion is
$\norm{F(x_k)}_V \le 10^{-11}$. The relative GMRES termination tolerance
$\kappa$ was chosen rather large but at the same time small enough to ensure
sufficiently fast local convergence with two decimal digits per iteration.
We first observe that no convergence can be obtained for
$H_\mathrm{rel} = 0.5$, even though $\norm{F(u_{41})}_V$ drops below
$2.6 \cdot 10^{-5}$ and $\norm{\delta u_7}_U \approx 0.28$. From these numbers
we can estimate the nonlinearity of the problem in terms of $\omega$ and its
well-posedness in terms of $r$ based on Lemma \ref{lem:Fdecrease}, which yields
\[
\omega \ge 2 (1 - \kappa) / (t_7 \norm{f(u_7)}_U) \approx 7.1
\quad \text{and} \quad
\omega r \ge 2 (1 - \kappa) / (t_{41} \norm{F(u_{41})}_V) \approx 7.6 \cdot
10^{4}
\]
and shows that the problem is highly nonlinear.

For the remaining choices of $H_\mathrm{rel}$ we obtain convergence, albeit a
different local solution is found for $H_\mathrm{rel} = 0.1$ than for the
others (compare Fig.~\ref{fig:carrier_solution}), which nicely confirms the
theory of Theorem \ref{thm:convergenceBSC}. As guaranteed by Lemma
\ref{lem:localFullSteps}, full steps $t_k = 1$ are taken in the vicinity of a
solution and we can clearly observe the asymptotic linear convergence rate of
$\kappa = 10^{-2}$ for the residual norm predicted by Lemma
\ref{lem:linearconvergence}. The final increment norms seem rather
large because we do not compute $\delta u_k$ if $\norm{F(u_k)}_V$ is already
below $10^{-11}$. Thus, the last increment norm lags behind by one iteration and
would be much smaller if we computed it again for the final iterate.

In Fig.~\ref{fig:carrier_gmres}, we see that the number of GMRES iterations
needed in each nonlinear iteration stays moderately small. In each GMRES
iteration, one operator-vector-multiplication must be carried out, which we 
compute via Chebfun as a directional derivative of $F$. In total, 1255
($H_\mathrm{rel} = 0.1$), 1455 ($H_\mathrm{rel} = 0.05$), and 2471
($H_\mathrm{rel} = 0.01$) directional derivatives of $F$ are required to solve
the problem. 

In addition, we can observe from Fig.~\ref{fig:carrier_gmres} that the predicted
step size $t_k$ needs to be corrected only in few iterations $k$ by the
backward step control bisection procedure outlined in section~\ref{sec:realization}.
In the first iteration, four ($H_\mathrm{rel} = 0.1, 0.05$) and five
($H_\mathrm{rel} = 0.01$) bisection steps are required to reduce the initial
step size guess $t_0 = 1$. In all other iterations marked by $\bullet$ in
Fig.~\ref{fig:carrier_gmres}, only one additional bisection step is necessary,
except in iteration $k=8$ for $H_\mathrm{rel} = 0.1$ and $k=21$ for
$H_\mathrm{rel} = 0.01$, where two bisection steps need to be taken. This backs
up our claim that the computational overhead of backward step control is small.

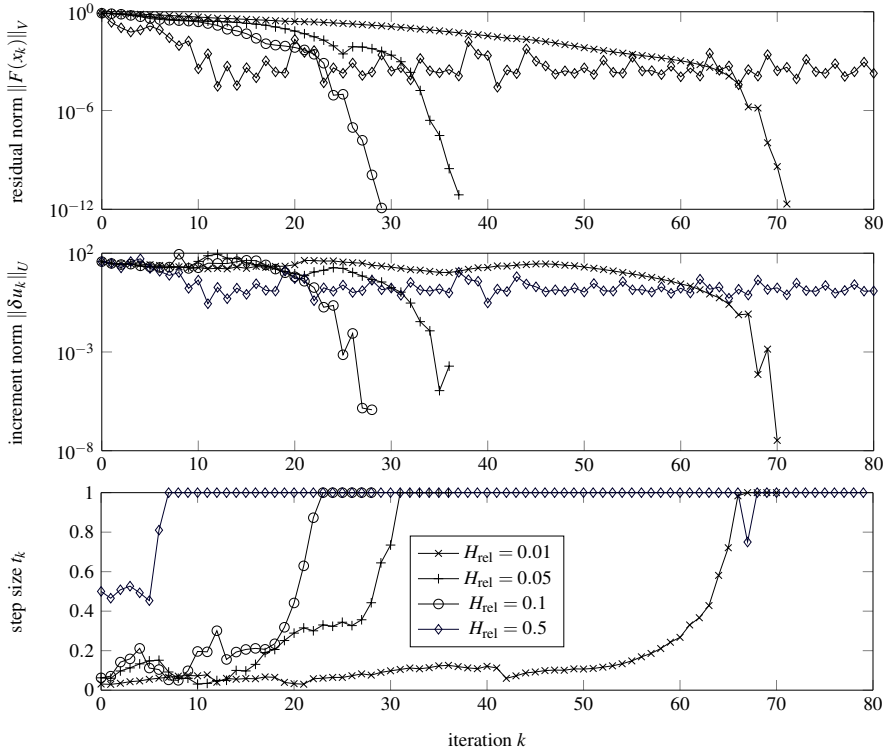
\begin{figure}[tb]
  \begin{center}
%
%
\definecolor{mycolor1}{rgb}{0.00000,0.00000,0.17241}%
\begin{tikzpicture}[scale=0.9]

\begin{axis}[%
width=0.95\textwidth,
height=0.15\textheight,
at={(0\textwidth,0\textheight)},
scale only axis,
separate axis lines,
every outer x axis line/.append style={black},
every x tick label/.append style={font=\color{black}},
xmin=0,
xmax=80,
every outer y axis line/.append style={black},
every y tick label/.append style={font=\color{black}},
ymode=log,
ymin=1e-12,
ymax=1,
yminorticks=true,
ylabel={residual norm $\norm{F(x_k)}_V$},
axis background/.style={fill=white}
]
\addplot [color=black,solid,mark=x,mark options={solid},forget plot]
  table[row sep=crcr]{%
0	0.816496580927726\\
1	0.790029899109026\\
2	0.766291434930829\\
3	0.738886570758214\\
4	0.706448641034833\\
5	0.672680665484149\\
6	0.634909088511211\\
7	0.595517913946676\\
8	0.55589399669793\\
9	0.515657320507544\\
10	0.476701694398399\\
11	0.441483490371933\\
12	0.406564432646215\\
13	0.390144986208951\\
14	0.366583980446912\\
15	0.34598756327736\\
16	0.325392761841446\\
17	0.30623822485458\\
18	0.285292612017863\\
19	0.266383830282909\\
20	0.255705167147134\\
21	0.247132898777281\\
22	0.23935033612242\\
23	0.224310615868374\\
24	0.20953689433802\\
25	0.19465566365976\\
26	0.181070493002611\\
27	0.166705891446427\\
28	0.151624711599626\\
29	0.139376653020254\\
30	0.12600228756203\\
31	0.112900845837282\\
32	0.100395299680629\\
33	0.0885485558393549\\
34	0.0784675066257065\\
35	0.0690962802399877\\
36	0.0601764590965218\\
37	0.0522056168847492\\
38	0.04549362035724\\
39	0.0397410481317883\\
40	0.034634666595682\\
41	0.0294597848503061\\
42	0.0249668753799962\\
43	0.0231588010298584\\
44	0.0209140086404864\\
45	0.0181778999069939\\
46	0.0153478289327861\\
47	0.0125415003254889\\
48	0.0100949181951477\\
49	0.00820320259153092\\
50	0.0065186609891555\\
51	0.00530399270202473\\
52	0.00437364814026496\\
53	0.00363399061520742\\
54	0.00305501058054998\\
55	0.00258264315585463\\
56	0.00218644646181106\\
57	0.00184458868653958\\
58	0.00155365270455214\\
59	0.00129220607329828\\
60	0.0010512705506538\\
61	0.000826659383820676\\
62	0.00061248874043716\\
63	0.000422534432127693\\
64	0.000260597652419821\\
65	0.000124109670535846\\
66	4.00479557317963e-05\\
67	1.66244719349787e-06\\
68	1.41789594820098e-06\\
69	1.09210549789929e-08\\
70	3.91739108606084e-10\\
71	2.08658726354032e-12\\
};
\addplot [color=black,solid,mark=+,mark options={solid},forget plot]
  table[row sep=crcr]{%
0	0.816496580927726\\
1	0.761658952573407\\
2	0.713850212791108\\
3	0.641426779743992\\
4	0.565506565435472\\
5	0.487355166833662\\
6	0.412368170891491\\
7	0.347336374067904\\
8	0.314642236581349\\
9	0.295367868626837\\
10	0.277262657800289\\
11	0.268372414071649\\
12	0.257790048701072\\
13	0.241139526405461\\
14	0.227528629111677\\
15	0.199093711007248\\
16	0.177681563941134\\
17	0.152133914423233\\
18	0.119286570173193\\
19	0.0923812093263947\\
20	0.0672183102663504\\
21	0.0462280471549922\\
22	0.030335786141461\\
23	0.0186983201344002\\
24	0.00846764323392152\\
25	0.00275547976501476\\
26	0.007476178597726\\
27	0.00721610278281882\\
28	0.00572500000268585\\
29	0.00393838396129286\\
30	0.00229498449224849\\
31	0.000920972742201224\\
32	0.000207846577751118\\
33	1.64777692012789e-05\\
34	2.52969398170233e-07\\
35	3.0237027482978e-08\\
36	2.94893642854542e-10\\
37	7.54761635881926e-12\\
};
\addplot [color=black,solid,mark=o,mark options={solid},forget plot]
  table[row sep=crcr]{%
0	0.816496580927726\\
1	0.761658952573407\\
2	0.705202126541156\\
3	0.598689838885752\\
4	0.497497828643865\\
5	0.38574334493679\\
6	0.341227101372542\\
7	0.304764669262662\\
8	0.28865063700174\\
9	0.270061354644358\\
10	0.242991026048848\\
11	0.192649169669352\\
12	0.147755303522577\\
13	0.0861210357554966\\
14	0.0686615658377321\\
15	0.0463121872214602\\
16	0.0243460855084114\\
17	0.0114429982490634\\
18	0.00939008081430182\\
19	0.00821774733582876\\
20	0.00674876799218295\\
21	0.00480596997030786\\
22	0.00260664259218833\\
23	0.000750612228000923\\
24	8.488866281442e-06\\
25	9.71085512713389e-06\\
26	9.28671032531154e-08\\
27	1.55183395080086e-08\\
28	1.22634519105052e-10\\
29	1.1973504396654e-12\\
};
\addplot [color=black,solid,mark=diamond,mark options={solid},forget plot]
  table[row sep=crcr]{%
0	0.816496580927726\\
1	0.225149297415132\\
2	0.105805997513201\\
3	0.0585016360025852\\
4	0.0782092793698187\\
5	0.13514589355092\\
6	0.0801844387974075\\
7	0.0263103147055343\\
8	0.00876238614601008\\
9	0.0166914790247353\\
10	0.000341538296576455\\
11	0.00282219464134327\\
12	2.93448369952672e-05\\
13	0.000497573947119082\\
14	3.30524157553982e-05\\
15	0.000400422624783008\\
16	9.76393974158192e-05\\
17	0.00104391438265612\\
18	0.000222646283094222\\
19	0.000192402613027569\\
20	0.0209285114723644\\
21	0.00335530025908698\\
22	0.00442915375317871\\
23	4.83123983521153e-05\\
24	0.000393833796272231\\
25	0.000183015756256064\\
26	0.000767710207708295\\
27	0.000134756429740426\\
28	0.000215318984114684\\
29	0.00235904480846628\\
30	0.000248787628838208\\
31	0.000400637346223648\\
32	7.23883723678109e-05\\
33	0.00145950487911155\\
34	0.000260372118210984\\
35	0.000194463720996965\\
36	0.000304442160271582\\
37	0.000124644050777556\\
38	0.0151961315184583\\
39	0.00263031887286565\\
40	0.00219164732194238\\
41	2.5976761081687e-05\\
42	0.000287837514126991\\
43	0.000173536041006381\\
44	0.00550858160655769\\
45	0.000998447850133191\\
46	0.000509470091736138\\
47	0.000163703491480122\\
48	0.00025678875952082\\
49	0.00018048645888191\\
50	0.000391144783133695\\
51	0.000154275387469895\\
52	0.00112925608380718\\
53	0.000210957311330048\\
54	0.000185060831744654\\
55	0.00172630296818113\\
56	0.000251408726176748\\
57	0.000264277903890273\\
58	0.000151761934654547\\
59	0.00048758910381955\\
60	0.000112166249970398\\
61	0.000372303749693597\\
62	0.000123449325356246\\
63	0.00300980542630632\\
64	0.000276561469665887\\
65	0.000508199474639872\\
66	3.68652735572562e-05\\
67	0.000304971501342191\\
68	0.000122685039650272\\
69	0.00248648021969636\\
70	0.000259602696024292\\
71	0.000422268399766936\\
72	8.32554620920244e-05\\
73	0.0012746935650094\\
74	0.000228025614313208\\
75	0.000182844562579884\\
76	0.000701404348051139\\
77	0.000117680445088894\\
78	0.000224914696576807\\
79	0.000867807524493437\\
80	0.000182030778298373\\
};
\end{axis}
\end{tikzpicture}%
%
%
\definecolor{mycolor1}{rgb}{0.00000,0.00000,0.17241}%
\begin{tikzpicture}[scale=0.9]

\begin{axis}[%
width=0.95\textwidth,
height=0.15\textheight,
at={(0\textwidth,0\textheight)},
scale only axis,
separate axis lines,
every outer x axis line/.append style={black},
every x tick label/.append style={font=\color{black}},
xmin=0,
xmax=80,
every outer y axis line/.append style={black},
every y tick label/.append style={font=\color{black}},
ymode=log,
ymin=1e-08,
ymax=100,
yminorticks=true,
ylabel={increment norm $\norm{\delta u_k}_U$},
axis background/.style={fill=white}
]
\addplot [color=black,solid,mark=x,mark options={solid},forget plot]
  table[row sep=crcr]{%
0	37.430435786285\\
1	31.4787423079066\\
2	30.0030219236812\\
3	28.9972943027639\\
4	27.9670237690716\\
5	26.2423689682985\\
6	23.9984321088538\\
7	21.5015895546088\\
8	20.4477719323353\\
9	17.8665896137666\\
10	18.8441989993064\\
11	16.2675015954933\\
12	17.2246427657141\\
13	16.1219543916851\\
14	20.2328513806461\\
15	17.163355836345\\
16	21.4629945660428\\
17	19.213213899539\\
18	21.510850680739\\
19	22.0668403505425\\
20	26.103847293689\\
21	42.9753436214261\\
22	42.4624655989726\\
23	40.0569633197268\\
24	40.6158827810234\\
25	35.7307802800291\\
26	33.1578401842591\\
27	31.4187057923083\\
28	24.5122280475483\\
29	22.471723581391\\
30	19.79796557186\\
31	17.1369365777225\\
32	14.6666600464122\\
33	13.8545154116231\\
34	12.03846785883\\
35	10.7096295948776\\
36	10.5368430412412\\
37	12.3783646387821\\
38	15.2952024526389\\
39	18.2802754516665\\
40	18.9304051976503\\
41	22.6625822201856\\
42	22.2339449596415\\
43	24.9111669330091\\
44	26.6061186336948\\
45	28.6867725991092\\
46	28.6616145339346\\
47	27.3549659113163\\
48	24.6538276981583\\
49	23.0233221999104\\
50	19.9805245201778\\
51	17.2997229302239\\
52	14.9343713571493\\
53	12.6085119472934\\
54	10.5940635793902\\
55	8.84838781957047\\
56	7.41434946548517\\
57	5.98136876918343\\
58	4.79440439740139\\
59	3.7970111348149\\
60	2.78274768894217\\
61	2.14251199956295\\
62	1.41555047763194\\
63	0.863996050598514\\
64	0.552072359591071\\
65	0.260004427763641\\
66	0.0765670237542317\\
67	0.0831259604471272\\
68	7.30555733791556e-05\\
69	0.00139173923746009\\
70	3.33974524649766e-08\\
71	0\\
};
\addplot [color=black,solid,mark=+,mark options={solid},forget plot]
  table[row sep=crcr]{%
0	37.430435786285\\
1	30.3286144389862\\
2	26.9817708720494\\
3	28.3193370454918\\
4	22.2178000221338\\
5	20.4114517387064\\
6	20.4817587627352\\
7	19.0537187370784\\
8	19.8722334563073\\
9	19.7468932904568\\
10	38.5349426201494\\
11	74.4772817550999\\
12	89.3057399747097\\
13	55.6161726846725\\
14	53.3010556295855\\
15	32.3194243078049\\
16	25.6161613281099\\
17	22.4276154572153\\
18	16.050204091583\\
19	11.9486542411119\\
20	9.15830379720423\\
21	7.34380771402011\\
22	11.2174105760422\\
23	13.6485182847562\\
24	18.5012836975328\\
25	16.9570198929751\\
26	10.6235374275039\\
27	6.97707492471778\\
28	4.7006706205853\\
29	3.48395257126185\\
30	1.80997011762176\\
31	1.06785811373832\\
32	0.299427697151989\\
33	0.034326204947871\\
34	0.0117927805341467\\
35	1.10445571350834e-05\\
36	0.000191466006291712\\
37	0\\
};
\addplot [color=black,solid,mark=o,mark options={solid},forget plot]
  table[row sep=crcr]{%
0	37.430435786285\\
1	30.3286144389862\\
2	26.596850340412\\
3	27.9349565576616\\
4	22.9148569773756\\
5	20.4334971490533\\
6	18.9182020799561\\
7	19.8816795226458\\
8	88.5356385783666\\
9	17.0116159002262\\
10	18.3857017631849\\
11	31.2541320522077\\
12	30.4608103404345\\
13	29.9556069960515\\
14	37.2738732568655\\
15	43.7175555567537\\
16	41.8130810247923\\
17	25.6315288374931\\
18	15.698289173392\\
19	10.276561471802\\
20	6.43456016891692\\
21	3.80298832530335\\
22	1.8234519157997\\
23	0.184367658388349\\
24	0.22590094407561\\
25	0.000719878547595099\\
26	0.00869496802958201\\
27	1.44260088420307e-06\\
28	1.18539647704533e-06\\
29	0\\
};
\addplot [color=mycolor1,solid,mark=diamond,mark options={solid},forget plot]
  table[row sep=crcr]{%
0	37.430435786285\\
1	30.8921647522089\\
2	18.1467619236214\\
3	37.9880823654385\\
4	52.0292840821355\\
5	16.9492148123846\\
6	12.1667854913835\\
7	7.55894465866186\\
8	10.511700035908\\
9	1.66164526404373\\
10	4.3417298435419\\
11	0.277015677209668\\
12	1.86830900704346\\
13	0.497520068746867\\
14	1.70111874912313\\
15	0.826031643029343\\
16	2.84407739504821\\
17	1.31642665918641\\
18	1.20459785979306\\
19	12.7016035923291\\
20	5.16820714898625\\
21	5.20467546723727\\
22	0.395458747277005\\
23	1.77525123334551\\
24	1.18805282961328\\
25	2.43383111376212\\
26	1.02802420479967\\
27	1.27697182480706\\
28	4.26537508283402\\
29	1.41981325813368\\
30	1.71693695139515\\
31	0.738592819783858\\
32	3.31646382544831\\
33	1.37873642243752\\
34	1.22376683194714\\
35	1.52808105318734\\
36	0.979399793470928\\
37	10.8186664241465\\
38	4.5790204020179\\
39	3.80275426933817\\
40	0.303192125266732\\
41	1.5242084660365\\
42	1.15111467965893\\
43	6.51568564726748\\
44	2.75652537475319\\
45	1.98661833364964\\
46	1.11643687525988\\
47	1.41105295822788\\
48	1.17946821836493\\
49	1.73176975510377\\
50	1.08512466378966\\
51	2.92895849949885\\
52	1.25145283996827\\
53	1.19683841487631\\
54	3.64877734057053\\
55	1.41251129610948\\
56	1.40405100454454\\
57	1.08472769881016\\
58	1.94232966402841\\
59	0.93425857458221\\
60	1.68319948081077\\
61	0.968114248535164\\
62	4.82235874500829\\
63	1.50913702378205\\
64	1.92579338323734\\
65	0.525576407152297\\
66	1.48990792839888\\
67	0.762776241446144\\
68	4.38374626891809\\
69	1.45189511064613\\
70	1.76043424103011\\
71	0.787167809862942\\
72	3.07292789796803\\
73	1.27589938242668\\
74	1.18178551454467\\
75	2.32705803072686\\
76	0.961109220038569\\
77	1.3036713017023\\
78	2.58483395351237\\
79	1.19194823398641\\
80	1.23421177497891\\
};
\end{axis}
\end{tikzpicture}%
%
\definecolor{mycolor1}{rgb}{0.00000,0.00000,0.17241}%
\begin{tikzpicture}[scale=0.9]

\begin{axis}[%
width=0.95\textwidth,
height=0.15\textheight,
at={(0\textwidth,0\textheight)},
scale only axis,
separate axis lines,
every outer x axis line/.append style={black},
every x tick label/.append style={font=\color{black}},
xmin=0,
xmax=80,
xlabel={iteration $k$},
every outer y axis line/.append style={black},
every y tick label/.append style={font=\color{black}},
ymin=0,
ymax=1,
ylabel={step size $t_k$},
legend style={at={(0.5,0.5)},anchor=center},
axis background/.style={fill=white}
]
\addplot [color=black,solid,mark=x,mark options={solid}]
  table[row sep=crcr]{%
0	0.03125\\
1	0.029405265334551\\
2	0.0350169692043168\\
3	0.0429418763111966\\
4	0.0468063334453352\\
5	0.0549601921822576\\
6	0.0607878383459011\\
7	0.0652925767239458\\
8	0.0710229368554477\\
9	0.0742977747265594\\
10	0.0726482645860256\\
11	0.0779396369128086\\
12	0.0400622435643463\\
13	0.0597306106249894\\
14	0.0553426739525477\\
15	0.0588022339927368\\
16	0.0577863419028723\\
17	0.067194743444924\\
18	0.0648684757212297\\
19	0.03951878568923\\
20	0.0329520495267775\\
21	0.0301179688450966\\
22	0.057773437539969\\
23	0.0606717161535636\\
24	0.0645921056972332\\
25	0.0645381361080038\\
26	0.0730796847909692\\
27	0.082521944766837\\
28	0.0762426279254945\\
29	0.0900823583762144\\
30	0.0979330905452812\\
31	0.104887275872016\\
32	0.112355536269452\\
33	0.108185760814611\\
34	0.113926324230902\\
35	0.123210289818608\\
36	0.125656880317496\\
37	0.119260365071186\\
38	0.112786739966433\\
39	0.108849379394394\\
40	0.120776236964413\\
41	0.112526477889296\\
42	0.0599068819758072\\
43	0.0725159666089527\\
44	0.0871515308789389\\
45	0.092147215019713\\
46	0.0991082989628558\\
47	0.101715535979133\\
48	0.100802582362082\\
49	0.107988813504275\\
50	0.106953267600522\\
51	0.109997070268303\\
52	0.116499134972432\\
53	0.123216323243081\\
54	0.133793180223917\\
55	0.148139160886382\\
56	0.168824542761312\\
57	0.185967240761688\\
58	0.210715510014618\\
59	0.242621086656612\\
60	0.267355528777775\\
61	0.330112497159648\\
62	0.366687018252007\\
63	0.428327179226344\\
64	0.581069052679001\\
65	0.72102248428647\\
66	0.983937225263567\\
67	1\\
68	1\\
69	1\\
70	1\\
};
\addlegendentry{$H_\mathrm{rel} = 0.01$};

\addplot [color=black,solid,mark=+,mark options={solid}]
  table[row sep=crcr]{%
0	0.0625\\
1	0.0604961336720751\\
2	0.097059998141582\\
3	0.112857203353862\\
4	0.133166542728677\\
5	0.148315815152934\\
6	0.151579526972115\\
7	0.0920454421132698\\
8	0.0602917061013229\\
9	0.0602917061013229\\
10	0.0310012758317014\\
11	0.0343495519921149\\
12	0.0490477042504068\\
13	0.0500105234274495\\
14	0.100021046854899\\
15	0.0974207081032344\\
16	0.130687758835698\\
17	0.190511894281437\\
18	0.205927661487928\\
19	0.251322862138225\\
20	0.288830353367963\\
21	0.315229083838555\\
22	0.300616869522124\\
23	0.330114109981821\\
24	0.323457278034533\\
25	0.343954597151469\\
26	0.326948460224505\\
27	0.356470334439515\\
28	0.442761631784082\\
29	0.64449997109039\\
30	0.734679324444884\\
31	1\\
32	1\\
33	1\\
34	1\\
35	1\\
36	1\\
};
\addlegendentry{$H_\mathrm{rel} = 0.05$};

\addplot [color=black,solid,mark=o,mark options={solid}]
  table[row sep=crcr]{%
0	0.0625\\
1	0.0709922673441502\\
2	0.1419845346883\\
3	0.158485147388878\\
4	0.211665839840258\\
5	0.111996736248291\\
6	0.104241703895652\\
7	0.0521208519478261\\
8	0.049837372666443\\
9	0.0980319152177586\\
10	0.196063830435517\\
11	0.194065629801214\\
12	0.300964531963525\\
13	0.15542498260279\\
14	0.192925102047323\\
15	0.206128956655181\\
16	0.211600418242349\\
17	0.206787262594986\\
18	0.234887032012483\\
19	0.318381631654115\\
20	0.44157645246533\\
21	0.629438502374635\\
22	0.873053577987544\\
23	1\\
24	1\\
25	1\\
26	1\\
27	1\\
28	1\\
};
\addlegendentry{$H_\mathrm{rel} = 0.1$};

\addplot [color=mycolor1,solid,mark=diamond,mark options={solid}]
  table[row sep=crcr]{%
0	0.5\\
1	0.465499759430172\\
2	0.508318595780802\\
3	0.52661490711337\\
4	0.492850559832814\\
5	0.453239850488481\\
6	0.810252081316412\\
7	1\\
8	1\\
9	1\\
10	1\\
11	1\\
12	1\\
13	1\\
14	1\\
15	1\\
16	1\\
17	1\\
18	1\\
19	1\\
20	1\\
21	1\\
22	1\\
23	1\\
24	1\\
25	1\\
26	1\\
27	1\\
28	1\\
29	1\\
30	1\\
31	1\\
32	1\\
33	1\\
34	1\\
35	1\\
36	1\\
37	1\\
38	1\\
39	1\\
40	1\\
41	1\\
42	1\\
43	1\\
44	1\\
45	1\\
46	1\\
47	1\\
48	1\\
49	1\\
50	1\\
51	1\\
52	1\\
53	1\\
54	1\\
55	1\\
56	1\\
57	1\\
58	1\\
59	1\\
60	1\\
61	1\\
62	1\\
63	1\\
64	1\\
65	1\\
66	1\\
67	0.75\\
68	1\\
69	1\\
70	1\\
71	1\\
72	1\\
73	1\\
74	1\\
75	1\\
76	1\\
77	1\\
78	1\\
79	1\\
};
\addlegendentry{$H_\mathrm{rel} = 0.5$};

\end{axis}
\end{tikzpicture}%
  \end{center}
  \caption{The residual and increment norms and the step size sequence for a
  GMRES--Newton method with backward step control applied to the Carrier
  equation with $\varepsilon = 10^{-3}$ for $\kappa = 10^{-2}$ and varying
  values of $H_\mathrm{rel}$.}
  \label{fig:carrier_convergence}
\end{figure}

\begin{figure}[tb]
  \sidecaption
%
\begin{tikzpicture}[scale=0.9]

\begin{axis}[%
width=0.55\textwidth,
height=0.17\textheight,
at={(0\textwidth,0\textheight)},
scale only axis,
separate axis lines,
every outer x axis line/.append style={black},
every x tick label/.append style={font=\color{black}},
xmin=0,
xmax=70,
xlabel={iteration $k$},
every outer y axis line/.append style={black},
every y tick label/.append style={font=\color{black}},
ymin=10,
ymax=55,
ylabel={GMRES iterations},
legend style={at={(0.73,0.24)},anchor=center},
axis background/.style={fill=white}
]
\addplot [color=black,solid,mark=x,mark options={solid}]
  table[row sep=crcr]{%
0	16\\
1	21.8333333333333\\
2	20\\
3	20\\
4	21\\
5	21\\
6	21\\
7	21\\
8	22\\
9	22\\
10	23\\
11	24\\
12	26\\
13	27.5\\
14	28\\
15	28\\
16	30\\
17	30\\
18	31\\
19	33\\
20	35.5\\
21	35.6666666666667\\
22	36\\
23	36\\
24	36\\
25	34\\
26	33\\
27	34\\
28	32\\
29	32\\
30	32\\
31	32\\
32	32\\
33	33\\
34	33\\
35	33\\
36	33\\
37	33\\
38	33\\
39	33\\
40	32\\
41	33\\
42	31\\
43	33\\
44	33\\
45	33\\
46	33\\
47	33\\
48	33\\
49	34\\
50	34\\
51	34\\
52	34\\
53	34\\
54	34\\
55	34\\
56	33\\
57	34\\
58	34\\
59	34\\
60	32\\
61	34\\
62	34\\
63	34\\
64	36\\
65	36\\
66	34\\
67	48\\
68	14\\
69	53\\
70	15\\
};
\addlegendentry{$H_\mathrm{rel} = 0.01$};
\addplot [color=black,only marks,mark=*,mark options={solid},forget plot]
  table[row sep=crcr]{%
1	21.8333333333333\\
13	27.5\\
20	35.5\\
21	35.6666666666667\\
43	33\\
};
\addplot [color=black,solid,mark=+,mark options={solid}]
  table[row sep=crcr]{%
0	16\\
1	22.2\\
2	21\\
3	23\\
4	22\\
5	24\\
6	27\\
7	29\\
8	34\\
9	35\\
10	39.5\\
11	38.5\\
12	37\\
13	37\\
14	34\\
15	31\\
16	29\\
17	29\\
18	28\\
19	28\\
20	29\\
21	30\\
22	35\\
23	36\\
24	40\\
25	41\\
26	37\\
27	33\\
28	31\\
29	33\\
30	35\\
31	33\\
32	37\\
33	41\\
34	52\\
35	30\\
36	52\\
};
\addlegendentry{$H_\mathrm{rel} = 0.05$};
\addplot [color=black,only marks,mark=*,mark options={solid},forget plot]
  table[row sep=crcr]{%
1	22.2\\
8	34\\
9	35\\
10	39.5\\
11	38.5\\
};
\addplot [color=black,solid,mark=o,mark options={solid}]
  table[row sep=crcr]{%
0	16\\
1	22.2\\
2	21\\
3	23\\
4	25\\
5	27\\
6	33.5\\
7	35\\
8	38.6666666666667\\
9	33\\
10	33.5\\
11	32\\
12	34\\
13	35\\
14	36.5\\
15	36\\
16	39\\
17	41\\
18	38\\
19	40\\
20	39\\
21	36\\
22	36\\
23	33\\
24	42\\
25	13\\
26	50\\
27	12\\
28	50\\
};
\addlegendentry{$H_\mathrm{rel} = 0.1$};
\addplot [color=black,only marks,mark=*,mark options={solid},forget plot]
  table[row sep=crcr]{%
1	22.2\\
6	33.5\\
7	35\\
8	38.6666666666667\\
10	33.5\\
14	36.5\\
};
\end{axis}
\end{tikzpicture}%
  \caption{The average number of GMRES iterations for a GMRES--Newton method
  with backward step control for $\kappa = 10^{-2}$ and varying values of
  $H_\mathrm{rel}$ for the Carrier equation
  with $\varepsilon = 10^{-3}$. Only in the iterations marked with $\bullet$,
  the backward step control bisection procedure performs extra iterations to
  determine $t_{k-1}$ and $\delta u_k$ and the plotted GMRES iterations are
  averaged over the number of bisection steps.}
  \label{fig:carrier_gmres}
\end{figure}
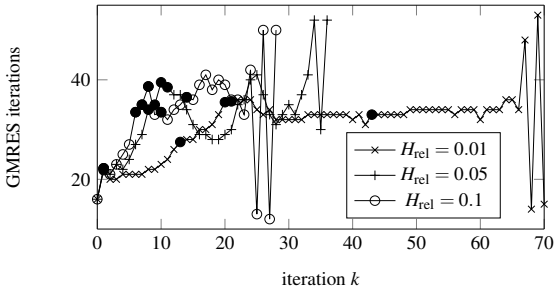

\subsection{Approximation by Finite Elements}
\label{sec:FE}

In contrast to section~\ref{sec:PGMRES}, we now explicitly discretize the increment
$\delta u_k$ and the step determination equation by Finite Elements: To this
end, let $\mathcal{C}$ be a partition of $\Omega$ into cells $C \in
\mathcal{C}$. We can then construct the finite-dimensional Finite Element
subspace
\[
U_{\mathcal{C}}^p = \left\{ u \in C^0(\Omega) \mid u \text{ is a polynomial
of degree $p$ on each } C \in \mathcal{C} \right\} \subset U.
\]
The increment is then determined by finding $\delta u_k \in U_{\mathcal{C}}^p$
such that
\begin{equation}
  \label{eqn:discretizedincrement}
  (F'(u_k) \delta u_k) \varphi = -F(u_k) \varphi \quad 
  \text{for all } \varphi \in U_{\mathcal{C}}^p.
\end{equation}
By fixing a nodal basis 
of $U_{\mathcal{C}}^p$, we obtain a linear system with a large but sparse
(typically symmetric positive definite)
$\abs{U_{\mathcal{C}}^p}$-by-$\abs{U_{\mathcal{C}}^p}$ matrix.

\subsubsection{Computation of norms in $V$}

The discretization in the previous paragraph is completely standard. We now
describe the non-standard part, which comprises the computation of 
\begin{equation}
  \label{eqn:kappak}
  \norm{F(u_k)}_V \quad \text{and} \quad 
  \kappa_k := \frac{\norm{F(u_k) + F'(u_k) \delta u_k}_V}{\norm{F(u_k)}_V}
\end{equation}
in the space $V = H^{-1}(\Omega)$. To this end, we use again the Riesz
representation \eqref{eqn:RieszRepresentation}. However, using the same Finite
Element subspace $U_{\mathcal{C}}^p$ for the discretization of
\eqref{eqn:RieszRepresentation} would yield the wrong value $\kappa_k = 0$
because the numerator vanishes. Moreover, using $U_{\mathcal{C}}^p$ would also
give wrong results for $\norm{F(u_k)}_V$ because the residual projected on
$U_{\mathcal{C}}^p$ converges locally quadratic (as a Newton method on a
finite-dimensional space) if we solve \eqref{eqn:discretizedincrement} exactly,
but does not see the discretization error. Thus, we need to solve
\eqref{eqn:RieszRepresentation} on richer Finite Element spaces. The numerical
results in section~\ref{sec:minsurf} indicate that choosing $U_{\mathcal{C}}^{p+1}$
seems to be sufficient for good estimates of the required $V$-norms. 

\subsubsection{Adaptive mesh refinement to minimize the contraction rate
$\kappa$}

Using $p$-refinement for the solution of \eqref{eqn:RieszRepresentation} instead
of refinement of $\mathcal{C}$ has two advantages: First, the increase in the
degrees of freedom $\abs{\smash[b]{\smash[t]{U_{\mathcal{C}}^{p+1}}}}$ with
respect to $\abs{U_{\mathcal{C}}^p}$ is only moderate if $p$ is moderately
large, e.g., $p = 3$. 
Second, the squared norm $\smash[t]{\norm{r_v}}_U^2$ can then be written as a sum of
contributions from each cell $C \in \mathcal{C}$, which indicate which cells
should ideally be refined if $\kappa_k$ is larger than a desired residual
contraction rate $\kappa < 1$ prescribed by the user. The cellwise contributions
$\kappa_k$ can be treated in the same fashion as existing cellwise error
indicators.

\subsubsection{Application to the minimum surface equation}
\label{sec:minsurf}

In this section, we consider the classical minimum surface problem in the
following special form: Let $\abs{.}$ denote the Euclidean norm in
$\mathbb{R}^2$ and let $\Omega = \left\{ x \in \mathbb{R}^2 \mid
\abs{x}_2 < 1 \right\}$ and $u^\partial(x) = \sin(2 \pi (x_1 + x_2))$.
We seek a function $u$ on $\Omega$ that equals $u^\partial$ on $\partial
\Omega$ and minimizes the area of its graph
\[
  \min I(u) = \int_{\Omega} \sqrt{1 + \smash[t]{\abs{\nabla u}}^2} \quad
  \text{s.t.} \quad u\big|_{\partial\Omega} = u^\partial\big|_{\partial\Omega}.
\]
With the spaces $U = H^1_0(\Omega)$ and $V = H^{-1}(\Omega)$ as before, the
minimum is described as the solution $u \in u^\partial + U$ to the variational
problem
\[
  F(u) \varphi := \int_{\Omega} \nabla \varphi \cdot \vect{g}(\nabla u)
 = 0 \quad \text{for all } \varphi \in U,
 \quad \text{where } \vect{g}(\vect{v}) := \left(1 +
 \smash[t]{\abs{\vect{v}}}^2\right)^{-\frac{1}{2}} \vect{v}.
\]
Thus, $F$ maps $u^\partial + U$ to $V$. Its G\^ateaux derivative $F':
(u^\partial + U) \times U \to V$ can then be expressed as
\[
\left( F'(u) \delta u \right) \varphi
= \int_{\Omega} \nabla \varphi \cdot \vect{g}'(\nabla u) \nabla \delta u,
\]
where the Jacobian of $\vect{g}$ is given by
\[
  \vect{g}'(\vect{v}) = \frac{1}{\sqrt{1 + \smash[t]{\abs{\vect{v}}}^2}} \left( \eye_2 -
  \frac{1}{1 + \smash[t]{\abs{\vect{v}}}^2} \vect{v} \vect{v}^T \right).
\]
We recall that $F$ is continuously G\^ateaux differentiable from
$U = H^1_0(\Omega)$ (with norm $\smash[t]{\norm{u}}_U^2 = \int_{\Omega}
\smash[t]{\abs{\nabla u}}^2$) to $V = H^{-1}(\Omega)$. To see this, 
we use the chain rule \cite[3.3.4.~Thm.]{Hamilton1982} on the continuously
G\^ateaux differentiable $L^2(\Omega, \mathbb{R}^2)$ inner product and the
Nemytskii operator defined by $\vect{g}$. The Nemytskii operator defined by
$\vect{g}$ mapping from $L^2(\Omega,
\mathbb{R}^2)$ to itself is continuously G\^ateaux differentiable by virtue of
\cite[Thm.~8, Rem.~6]{Goldberg1992}, because $\vect{g}$ and its Jacobian are
uniformly bounded $\abs{\vect{g}(\vect{v})} \le 1$ and $\abs{\vect{g}'(\vect{v})}_{2 \times 2}
\le 1$ (where $\abs{.}_{2 \times 2}$ denotes the spectral norm of 2-by-2
matrices), implied by the eigenvalues
\[
  \frac{1}{ \sqrt{1 + \smash[t]{\abs{\vect{v}}}^2}}
  \left( 1 - \frac{\smash[t]{\abs{\vect{v}}}^2}{1 + \smash[t]{\abs{\vect{v}}}^2} \right)
  \quad \text{and} \quad
  \frac{1}{\sqrt{1 + \smash[t]{\abs{\vect{v}}}^2}},
\]
of $\vect{g}'(\vect{v})$ corresponding to the eigenspace spanned by $\vect{v}$ and its
complement. We remark here that $F$ does not satisfy the stronger property of
being continuously Fr\'echet differentiable as a mapping from $H^1_0(\Omega)$ to
$H^{-1}(\Omega)$ as noted in \cite{Wachsmuth2014}.

We solve the resulting system \eqref{eqn:discretizedincrement} only
approximately with a preconditioned Conjugate Gradient (PCG) method
\cite{Hestenes1952}. The resulting inexactness also contributes to the
computations of $\kappa_k$ in \eqref{eqn:kappak}.

All computations were obtained with the software package deal.II
\cite{Bangerth2016,Bangerth2007}.

\subsubsection{Efficient computation of Riesz representations}

As described in section~\ref{sec:FE}, the Riesz representations $r_v$ need to be
computed from \eqref{eqn:RieszRepresentation} in order to compute the $V$-norms
entering $\kappa_k$. The following algorithmical and computational approaches
are important to prevent the computation times for the solution of
\eqref{eqn:RieszRepresentation} on the richer Finite Element subspace
$U_{\mathcal{C}}^{p+1}$ from dominating the overall computational effort:
\begin{enumerate}
\item We found that PCG with a symmetric Gauss--Seidel smoother as
  preconditioner delivers good results. In the computations reported below, we
  employed a Chebyshev smoother of degree four because it yields a slightly
  better performance than Gauss--Seidel when running the code on several
  processors in parallel.
  The use of multigrid does not pay off because it usually involves a rather
  expensive setup machinery on unstructured meshes (see, e.g.,
  \cite{Janssen2011}) and the right-hand sides of
  \eqref{eqn:RieszRepresentation} consist mainly of high-frequency residuals
  in $U_{\mathcal{C}}^{p+1} \setminus U_{\mathcal{C}}^p$, the low frequency
  residuals having been mostly eliminated on $U_{\mathcal{C}}^p$ already. 
\item In order to avoid a possible memory bottleneck caused by storing the
  stiffness matrix discretized on the high-dimensional space
  $U_{\mathcal{C}}^{p+1}$, we use a matrix-free realization of the Laplacian
  \cite{Kronbichler2012}.
\item Because the resulting computation times are then dominated by the
  bandwidth of the access to main memory, we perform all computations involved
  in the (matrix-free) matrix-vector products for the solution of
  \eqref{eqn:RieszRepresentation} only with single instead of double precision
  floating point arithmetic. The numerical results below indicate that this
  approach is still sufficiently accurate, while being considerably faster.
\item Because $\kappa_k$ only steers the algorithm but does not affect the
  quality of the iterates $x_k$ directly, it can be computed with rather low
  accuracy requirements in the PCG method. We use relative stopping criteria of
  $0.1$ and $0.05$ for the numerator and the denominator of $\kappa_k$ in
  \eqref{eqn:kappak}.
\end{enumerate}

\subsubsection{Details about the numerical setup}

\begin{figure}[tb]
  \begin{center}
    \includegraphics[height=30mm]{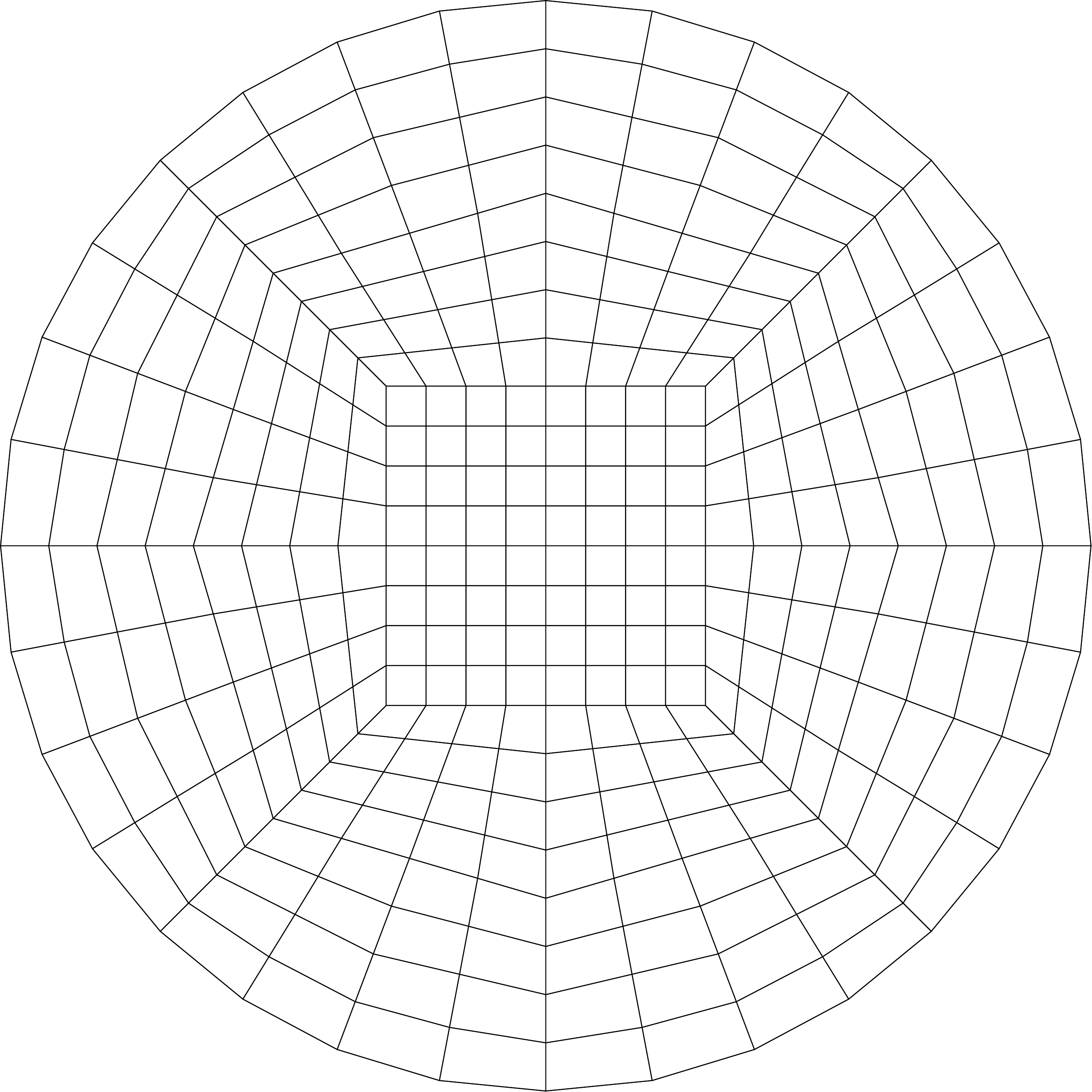}
    \qquad
%
%
\begin{tikzpicture}[scale=0.9]

\begin{axis}[%
width=0.570552\textwidth,
height=2cm,
at={(0\textwidth,0\textwidth)},
scale only axis,
xmin=0,
xmax=21,
xlabel={iteration $k$},
ymin=0,
ymax=1.005,
ylabel={$\kappa{}_k$}
]
\addplot [color=black,dotted,forget plot]
  table[row sep=crcr]{%
0	0\\
0.2	0\\
0.4	0\\
0.6	0\\
0.8	0\\
1	0\\
2	0\\
3	0\\
4	0\\
5	0\\
6	0\\
7	0\\
8	0\\
9	0\\
10	0\\
10.5	0.9999\\
11	0.232\\
12	0.354\\
12.5	0.7455\\
13	0.204\\
13.5	0.617\\
14	0.1643\\
14.5	0.7\\
15	0.1833\\
15.5	0.8957\\
16	0.1234\\
16.5	0.9616\\
17	0.1005\\
17.5	0.9963\\
18	0.0856\\
18.5	1.0047\\
19	0.0768\\
19.5	1.0041\\
20	0.0856\\
20.5	1.0038\\
};
\addplot [color=black,only marks,mark=o,mark options={solid},forget plot]
  table[row sep=crcr]{%
0.2	0\\
0.4	0\\
0.6	0\\
0.8	0\\
10.5	0.9999\\
12.5	0.7455\\
13.5	0.617\\
14.5	0.7\\
15.5	0.8957\\
16.5	0.9616\\
17.5	0.9963\\
18.5	1.0047\\
19.5	1.0041\\
20.5	1.0038\\
};
\addplot [color=black,mark=x,mark options={solid},forget plot]
  table[row sep=crcr]{%
0	0\\
1	0\\
2	0\\
3	0\\
4	0\\
5	0\\
6	0\\
7	0\\
8	0\\
9	0\\
10	0\\
11	0.232\\
12	0.354\\
13	0.204\\
14	0.1643\\
15	0.1833\\
16	0.1234\\
17	0.1005\\
18	0.0856\\
19	0.0768\\
20	0.0856\\
};
\end{axis}
\end{tikzpicture}%
  \end{center}
  \caption{Left: The initial mesh for the solution of the minimum surface
  equation has 425 cells. With $p=3$, the resulting Finite Element space has
  2929 degrees of freedom. Right: The accepted and discarded values of
  $\kappa_k$ in the numerical solution of the minimum surface equation.
  Discarded values are marked with $\circ$ at fractional iteration numbers, the
  values of $\kappa_k$ on the finally successful mesh are marked with $\times$
  at integer iteration numbers.
  The discarded values of $\kappa_k$ converge to 1, while the accepted ones
  approach approximately 0.08.}
  \label{fig:initialmesh}
\end{figure}


\begin{figure}[tb]
  \begin{center}
%
%
\definecolor{mycolor1}{rgb}{0.00000,0.00000,0.17241}%
\definecolor{mycolor2}{rgb}{1.00000,0.10345,0.72414}%
\begin{tikzpicture}[scale=0.9]

\begin{axis}[%
width=0.410625\textwidth,
height=0.397541\textwidth,
at={(0\textwidth,0\textwidth)},
scale only axis,
xmode=log,
xmin=0.1,
xmax=10000,
xminorticks=true,
xlabel={CPU time [s]},
ymode=log,
ymin=1e-06,
ymax=10,
yminorticks=true,
ylabel={residual norm $\norm{F(x_k)}_V$},
legend style={legend cell align=left,align=left,draw=white!15!black}
]
\addplot [color=black,solid,mark=x,mark options={solid}]
  table[row sep=crcr]{%
0.6569	1.4093\\
0.7173	1.2745\\
0.7749	1.1233\\
0.8321	0.83379\\
0.8903	0.53218\\
0.9459	0.24909\\
1.0065	0.11379\\
1.0572	0.060679\\
1.1095	0.044948\\
1.1785	0.042512\\
1.2565	0.042512\\
1.3295	0.076745\\
1.4216	0.018745\\
1.5347	0.018745\\
1.6518	0.0067871\\
1.7915	0.0040193\\
1.9899	0.0040193\\
2.222	0.0013314\\
2.4989	0.0012716\\
3.0801	0.0012716\\
3.8018	0.0010114\\
5.5578	0.0010114\\
8.4796	0.00068072\\
11.9542	0.00068072\\
25.766	0.0002054\\
48.7749	0.0002054\\
118.3143	5.8942e-05\\
165.1087	5.8942e-05\\
402.0966	2.0544e-05\\
566.0668	7.2458e-06\\
566.0668	7.2458e-06\\
};
\addlegendentry{$\text{Kelly }\rho\text{ = 0.5}$};

\addplot [color=black,solid,mark=+,mark options={solid}]
  table[row sep=crcr]{%
0.6509	1.4093\\
0.7049	1.2745\\
0.7606	1.1233\\
0.8186	0.8338\\
0.8759	0.53219\\
0.9305	0.24909\\
0.9822	0.1138\\
1.0329	0.060683\\
1.0835	0.044948\\
1.1475	0.042511\\
1.2236	0.042511\\
1.2951	0.076752\\
1.3648	0.018743\\
1.4269	0.013782\\
1.5163	0.012889\\
1.6318	0.012889\\
1.7524	0.0049808\\
1.8638	0.0038339\\
2.0245	0.0037813\\
2.2126	0.0037813\\
2.4259	0.0021012\\
2.6886	0.00203\\
3.2294	0.00203\\
3.9146	0.0010093\\
4.7052	0.0010038\\
6.4155	0.0010038\\
9.0932	0.00080883\\
12.022	0.00080883\\
22.2241	0.00025117\\
34.0215	0.00025143\\
40.8896	0.00025143\\
77.1263	9.2869e-05\\
101.2777	9.2869e-05\\
270.1509	2.1851e-05\\
501.3119	2.1855e-05\\
538.3801	2.1855e-05\\
792.596	8.0685e-06\\
1346.5666	8.0687e-06\\
1369.4008	6.0308e-06\\
1369.4008	6.0308e-06\\
};
\addlegendentry{$\text{Kelly }\rho\text{ = 0.1}$};

\addplot [color=black,solid,mark=o,mark options={solid}]
  table[row sep=crcr]{%
0.647	1.4093\\
0.7082	1.2745\\
0.766	1.1233\\
0.8237	0.8338\\
0.8814	0.53219\\
0.9373	0.24909\\
0.9896	0.1138\\
1.0396	0.060683\\
1.0898	0.044948\\
1.1555	0.042511\\
1.235	0.042511\\
1.3071	0.076752\\
1.3777	0.018743\\
1.44	0.013782\\
1.5059	0.012887\\
1.5854	0.012737\\
1.6907	0.012737\\
1.7998	0.0049758\\
1.9026	0.0038927\\
2.0143	0.0038462\\
2.1668	0.0038155\\
2.3504	0.0038155\\
2.563	0.0013034\\
2.761	0.0012694\\
3.0384	0.0012694\\
3.5738	0.0012694\\
4.2193	0.0010128\\
4.9588	0.0010081\\
6.5587	0.0010081\\
8.8608	0.00080769\\
11.5654	0.00081045\\
14.0906	0.00081045\\
24.5086	0.00025064\\
36.0325	0.00025091\\
42.8729	0.00025091\\
75.9669	9.2857e-05\\
124.1975	9.2878e-05\\
141.2214	9.2878e-05\\
280.0188	2.1852e-05\\
496.0503	2.1855e-05\\
532.4073	2.1855e-05\\
860.2228	8.0686e-06\\
1362.5546	8.0688e-06\\
1385.2158	6.0252e-06\\
1385.2158	6.0252e-06\\
};
\addlegendentry{$\text{Kelly }\rho\text{ = 0.01}$};

\addplot [color=mycolor1,thick,mark=diamond,mark options={solid}]
  table[row sep=crcr]{%
0.6716	1.4093\\
0.7334	1.2745\\
0.7935	1.1233\\
0.8528	0.8338\\
0.9128	0.53219\\
0.9711	0.24909\\
1.0255	0.11377\\
1.0778	0.060681\\
1.1299	0.044952\\
1.1886	0.042485\\
1.3844	0.042327\\
1.4807	0.076964\\
1.734	0.019545\\
2.1166	0.0072844\\
3.0378	0.0018734\\
5.8258	0.00066282\\
14.857	0.00022764\\
41.7841	7.8925e-05\\
108.2508	2.7796e-05\\
319.9212	9.8912e-06\\
702.2327	3.4649e-06\\
702.2327	3.4649e-06\\
};
\addlegendentry{$\kappa\text{ estimator}$};

\end{axis}

\begin{axis}[%
width=0.410625\textwidth,
height=0.397541\textwidth,
at={(0.540296\textwidth,0\textwidth)},
scale only axis,
xmode=log,
xmin=1000,
xmax=10000000,
xminorticks=true,
xlabel={degrees of freedom},
ymode=log,
ymin=1e-07,
ymax=0.1,
yminorticks=true,
ylabel={residual norm $\norm{F(x_k)}_V$},
legend style={legend cell align=left,align=left,draw=white!15!black}
]
\addplot [color=black,solid,mark=x,mark options={solid},forget plot]
  table[row sep=crcr]{%
4501	0.076745\\
6965	0.0067871\\
11963	0.0013314\\
22695	0.0012716\\
47485	0.0010114\\
122065	0.00068072\\
369401	0.0002054\\
777677	5.8942e-05\\
1527887	7.2458e-06\\
};

\addplot [color=black,solid,mark=+,mark options={solid},forget plot]
  table[row sep=crcr]{%
4501	0.013782\\
7093	0.0038339\\
11665	0.0021012\\
24039	0.0010093\\
46163	0.0010038\\
107965	0.00025117\\
272321	0.00025143\\
763645	2.1851e-05\\
1374813	8.0685e-06\\
1436135	6.0308e-06\\
};

\addplot [color=black,solid,mark=o,mark options={solid},forget plot]
  table[row sep=crcr]{%
4501	0.012887\\
6965	0.0038462\\
12151	0.0012694\\
23851	0.0010128\\
45919	0.00080769\\
108345	0.00025064\\
271861	9.2857e-05\\
763293	2.1852e-05\\
1372449	8.0686e-06\\
1436328	6.0252e-06\\
};

\addplot [color=mycolor1,thick,mark=diamond,mark options={solid},forget plot]
  table[row sep=crcr]{%
3583	0.076964\\
4877	0.019545\\
9159	0.0072844\\
19187	0.0018734\\
48633	0.00066282\\
110109	0.00022764\\
255927	7.8925e-05\\
555353	2.7796e-05\\
1004985	9.8912e-06\\
1526294	3.4649e-06\\
};

\addplot [color=black,dashed]
  table[row sep=crcr]{%
1000	0.1\\
10000000	1e-07\\
};
\addlegendentry{$\smash[t]{\text{dof}}^{\text{-3/2}}$};

\end{axis}
\end{tikzpicture}%
  \end{center}
  \caption{Convergence of the residuals for backward step control
  $\kappa$-optimizing mesh refinement (black) and mesh refinement based on the
  Kelly indicator for varying values of the required reduction $\rho$ on the
  current mesh for the minimum surface equation.}
  \label{fig:minsurfConvergence}
\end{figure}
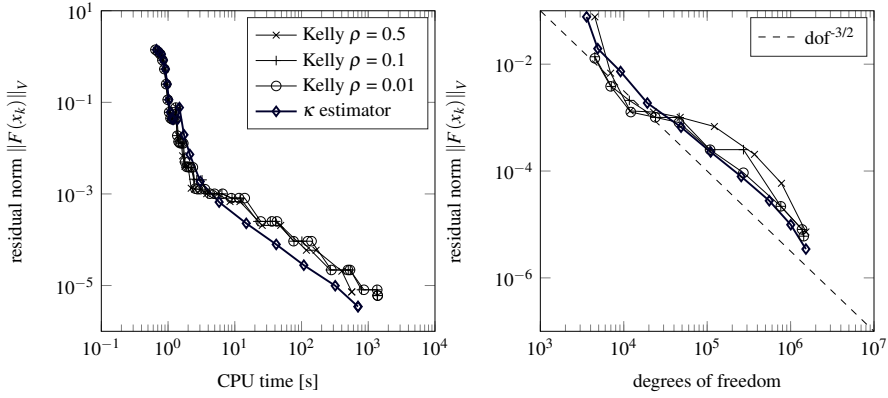

We discretize \eqref{eqn:discretizedincrement} and
\eqref{eqn:RieszRepresentation} by nodal Finite Elements of order $p = 3$ and $p
+ 1 = 4$, respectively, on quadrilaterals with tensor product polynomials using
Gauss--Lobatto nodes. For the elements on the curved boundary, we employ
polynomial tranformations of degree seven from the reference cell to the
physical cells. The relative tolerance of PCG for the solution of
\eqref{eqn:discretizedincrement} is $0.001$ (in the Euclidean norm on the
discretized vectors).

As a starting guess, we let $u_0$ be the Finite Element interpolation of
$u^\partial$ on the coarsest mesh depicted in Fig.~\ref{fig:initialmesh}. In a
first phase, we iterate until the increment norm in $U$ is below 0.01 without
computing any $V$-norms. This first phase is a finite-dimensional Newton method
($\kappa = 0$) globalized with backward step control ($H_{\mathrm{rel}} = 0.05$,
$t_0 = 1$) on the Finite Element subspace $U_{\mathcal{C}}^p$ belonging to the
initial mesh.

For the successive phase of nonlinear adaptive mesh refinement described in
section~\ref{sec:FE}, we choose $\kappa = 0.5$. If $\kappa_k > \kappa$, we mark
all cells for refinement that have a contribution of more than $2^{-p}$ times
the maximum cell contribution to $\kappa_k$, up to a given maximum number of
200,000 cells. The resulting number of cells might be slightly higher in the
final mesh due to mesh smoothing in deal.II. 

In the first eight iterations, the step size is gradually increased from $t_0 =
0.0625$ to $t_7 = 0.9738$. All other iterations are performed with full steps
$t_k = 1$. The mesh is refined for the first time in iteration 11, kept for
iteration 12, and then successively refined in each further step until the
maximum number of cells is reached in iteration 21. We can furthermore observe
from Fig.~\ref{fig:initialmesh} that from iteration 16 on, the
first trial value of $\kappa_k$ before the refinement is always one (up to three
decimal digits), which shows that no further improvement can be achieved by
performing more nonlinear iterations on the current discretization, which
is automatically detected correctly by the algorithm.

We compare in Fig.~\ref{fig:minsurfConvergence} the convergence of the residual
norm $\norm{F(u_k)}_V$ (computed afterwards with high accuracy on a refined
mesh, which is generated by one additional global refinement step of the
triangulation of the final mesh) of backward step control $\kappa$-optimizing
adaptive mesh refinement with the convergence when using mesh refinement with
the deal.II builtin Kelly error indicator \cite{Kelly1983,Gago1983}. 
In contrast to the theory of backward
step control, there is no theoretical guideline for the Kelly indicator on how
many nonlinear iterations to run before another round of refinement is
triggered. We choose to refine as soon as the increment norm $\norm{\delta
u_k}_U$ becomes less than or equal to a factor $\rho > 0$ of the error estimate
returned on the last mesh by the Kelly indicator.
Fig.~\ref{fig:minsurfConvergence} shows that $\kappa$-optimizing adaptive mesh
refinement delivers the best ratio of residual norm versus CPU time and versus
the number of degrees of freedom compared to mesh refinement based on the Kelly
indicator for varying values of $\rho = 0.5, 0.1, 0.01$. 

The computations for the solution of the minimum surface equation with a final
number of 1.5 million degrees of freedom using $\kappa$-optimizing mesh
refinement took 112\,s wall clock time on the four cores of a mid 2012 MacBook
Pro, 2.3 GHz Intel Core i7, 8 GB. Out of this grand total, the computations
necessary for estimating $\kappa_k$ took only 18\,s, even though they need to be
performed on the high-dimensional Finite Element space $U_{\mathcal{C}}^{p+1}$.

\section{Conclusions}

We presented a comprehensive convergence analysis for \eqref{eqn:BSC}, a method
that globalizes the convergence of Newton-type methods \eqref{eqn:Newton} for
the solution of \eqref{eqn:FOfxIsZero} in a Hilbert space setting. We proved
that under the reasonable assumptions A\ref{ass:validIni}--A\ref{ass:gamma} the
iterates $u_k$ either leave the region of $r$-regular points $\mathcal{R}_r$ (in
which case we need to adjust $M$ or embed $F$ in a suitable homotopy in order to
prevent attraction to
singularities) or converge to the distinctive solution $u_0^\ast$ (the initial
guess $u_0$ propagated by the generalized Newton flow \eqref{eqn:genDavidenko})
provided that $H > 0$ is chosen sufficiently small. Moreover, we provided an
$H$-dependent a priori bound on the decrease of $\norm{F(u_k)}_V$ and
characterized the asymptotic linear residual convergence rate by $\kappa$. We
provided efficient numerical methods based on the blueprint of bounding and
optimizing $\kappa$ in each iteration, either over a finite-dimensional subspace
in a
Krylov--Newton method or through an adaptive Finite Element discretization, in
order to balance the nonlinear residual norm with the residual norm of
the linear systems.  We applied these methods to the class of nonlinear elliptic
boundary value problems and presented numerical results for the Carrier equation
in a Chebfun implementation and for the minimum surface equation in deal.II. The
challenge to efficiently compute norms in $V = H^{-1}(\Omega)$ via the Riesz
preconditioner can be addressed by suitable numerical methods and techniques.

\begin{acknowledgements}
  The author is grateful to Felix Lenders 
  and to Gerd Wachsmuth for comments on an earlier draft of this manuscript and
  to the anonymous reviewers for their fruitful comments.
  This work was funded by the European Research Council through S.  Engell's and
  H.G. Bock's ERC Advanced Investigator Grant MOBOCON (291 458) and by the
  German Federal Ministry of Education and Research under grants 05M2013-GOSSIP
  and 05M2016-MOPhaPro.
\end{acknowledgements}


\end{document}